\documentclass[leqno,11pt,a4paper]{amsart}
\usepackage{bw_jc}
\usetikzlibrary{cd}
\newcommand{\rz}{\R{\cdot}\Z}

\newcommand{\chom}{c^{\text{\textnormal{hom}}}}

\usepackage[justification=centering]{caption}

\begin{document}
\author[J.~Chaidez]{Julian Chaidez}
\address{Department of Mathematics \\ Princeton University \\ Princeton \\ NJ \\ 08544 \\ USA}
\email{jchaidez@princeton.edu}
\vs

\title[Newton--Okounkov bodies \& embeddings]{Newton--Okounkov bodies and symplectic embeddings into non-toric rational surfaces}
\maketitle

\begin{abstract} We develop new methods of both constructing and obstructing symplectic embeddings into non-toric rational surfaces using the theory of Newton-Okoukov bodies. Applications include sharp embedding results for concave toric domains into non-toric rational surfaces, and new cases of non-existence for infinite staircases in the non-toric setting.
\end{abstract}

\section{Introduction} \label{sec:intro}

A symplectic embedding is a smooth embedding between symplectic manifolds of the same dimension that respects symplectic forms. The study of such maps is at the heart of symplectic geometry \cite{gro_pse_85,sch_sym_18} and many basic questions about their existence or non-existence remain open.

\vspace{3pt}

Over the last decade, dramatic progress has been made in dimension four (i.e.~the first non-trivial dimension). This was initiated by Hutchings' foundational work on embedded contact homology (ECH) \cite{hut_qua_11,hut_lec_14} and more specifically the introduction of \emph{ECH capacities}
\[\cech_k(X,\omega) \in (0,\infty] \qquad\text{ of a symplectic $4$-manifold }(X,\omega) \quad\text{and}\quad k \in \N\]
Soon after their introduction, McDuff used ECH capacities to resolve Hofer's conjecture on ellipsoid embeddings \cite{mcd_hof_11}. This initiated a flurry of progress on symplectic embeddings between ellipsoids and other toric domains \cite{ccfhr_sym_14,ck_ehr_20,chmp_inf_20}. In particular, Hofer's conjecture was extended to a more general class of embeddings between toric domains by Cristofaro-Gardiner \cite{cri_sym_19}.

\vspace{3pt}

Connections between symplectic embeddings and algebraic geometry have been explored by many authors \cite{bir_fro_01,mp_sym_94}. In particular, recent work of the authors \cite{wor_ech_21,cw_ech_20,wor_tow_22,wor_alg_22} has focused on the connections between ECH capacities and positivity in algebraic geometry (c.f.~Lazarsfeld \cite{laz_pos_17}). In this story, a central role is played by an algebraic version of ECH capacities called \emph{algebraic capacities}
\[\calg_k(Y,A) \qquad\text{of a polarized projective surface }(Y,A) \quad\text{and}\quad k \in \N\]
As with any such bridge between fields, benefits flow in both directions \cite{bir_con_99,lms_gro_13,wor_alg_22,cw_ech_20}. However, nearly all of the existing sharp embedding results that are approachable through this bridge apply only in the toric setting.

\vspace{3pt}

In this paper, we develop a number of tools from algebraic positivity to study symplectic embeddings from toric domains into non-toric targets. In particular, we focus on Zariski decomposition and Newton--Okounkov bodies in algebraic positivity and use them to study symplectic embedding obstructions coming from ECH. As an application, we provide new sharp results on many symplectic embedding problems with non-toric target.

\subsection{Newton--Okounkov bodies} Given an $n$-dimensional, smooth projective variety $Y$ equipped with an effective $\R$-divisor $A$ and a $\Z^n$-valued valuation $\nu\colon\C(Y)\to\Z^n$ on the field of rational functions $\C(Y)$ there is an associated \emph{Newton--Okounkov body}
\[\Delta(Y,A,\nu) \subseteq \R^n\]
Newton--Okounkov bodies are convex bodies in $\R^n$ generalizing the perhaps more familiar notion of moment polytopes in toric geometry. In particular, there is a relationship between counts of sections of the line bundles $\mO(mA)$ and lattice points in the dilates $m\Delta(Y,A,\nu)$, and the normalized volume of $\Delta(Y,A,\nu)$ is equal to the volume of the pair $(Y,A)$ \cite{laz_pos_17,lm_con_09}. The general expectation of the theory of Newton--Okounkov bodies is that many, if not all, quantifications of the algebraic positivity of $(Y,A)$ can be accessed via the combinatorics of Newton--Okounkov bodies.

\vspace{3pt}

In dimension two every Newton--Okounkov body has an associated \emph{combinatorial weight sequence}. The weight expansion of a concave or convex region $\Omega \subseteq \R_{\geq0}^2$ with two adjacent boundary edges on the coordinate axes \cite{mcd_hof_11,cri_sym_19,ccfhr_sym_14} is a sequence of real numbers describing a decomposition of $\Omega$ into triangles.

\vspace{3pt}

In the toric setting the moment polytope and associated weight sequence play a key role in the study of obstructions to embeddings between \emph{toric domains}. A toric domain is a symplectic $4$-manifold $X_\Omega$ given as preimage of a certain type of region $\Omega\subseteq\R^2$ under the moment map $\mu\colon\C^2\to\R^2$ for the standard $2$-torus action on $\C^2$. The ECH capacities of convex and concave toric domains can be entirely computed using the moment polytope and weight expansion \cite[\S A]{cri_sym_19}.

\subsection{Symplectic embeddings via Newton--Okounkov bodies} The first main result of this paper provides machinery for constructing symplectic embeddings using Newton--Okounkov bodies by extending work of Kaveh \cite{kav_tor_19}.

\vspace{3pt}

Let $(Y,A)$ be a smooth projective variety of dimension $n$ equipped with a flag
$$Y_\bullet:Y=Y_n\supseteq Y_{n-1}\supseteq\dots\supseteq Y_1\supseteq Y_0$$
where $Y_i$ is an irreducible subvariety of $Y$ of dimension $i$. $Y_\bullet$ determines a $\Z^n$-valued valuation $\nu_{Y_\bullet}$ on $\C(Y)$ and so there is an associated Newton--Okounkov body
\[\Delta = \Delta(Y,A,Y_\bullet) := \Delta(Y,A,Y_\bullet) \subseteq \R_{\geq0}^n\]
There is an associated open toric \emph{free domain} $F_\Delta$ to $\Delta$ given by the Lagrangian product
\[
F_\Delta := T^n \times \on{int}(\Delta) \subset T^n \times \R^n \simeq T^*T^n
\]
In the language of projective geometry this is the variety $(\C^\times)^n$ equipped with a certain toric K\"ahler form. In \cite{kav_tor_19} Kaveh uses a type of 'weighted' deformation to the normal cone and gradient--Hamiltonian flow to prove the following result.

\begin{thm*}[\!{\cite[Thm.~10.5]{kav_tor_19}}] \label{thm:intro_kaveh} Let $(Y,A)$ be a polarized smooth variety with a flag $Y_\bullet$ on $Y$. Let $\Delta=\Delta(Y,A,Y_\bullet)$ be the associated Newton--Okounkov body. Suppose $X \subseteq F_\Delta$ is a compact domain, then there is a symplectic embedding
\[X \to (Y,\omega_A)\]
\end{thm*}

\noindent Drawing on work of Pabiniak \cite{pab_gro_14} Kaveh applies Thm.~\ref{thm:intro_kaveh} to give a lower bound the Gromov width of projective varieties in terms of their Newton--Okounkov bodies \cite[Cor.~12.4]{kav_tor_19} and study symplectic ball packings in such spaces \cite[Cor.~12.6]{kav_tor_19}.

\vspace{3pt}

Now suppose that the Newton--Okounkov body $\Delta$ contains a neighborhood of the origin. Such a convex body has an associated toric domain $X_\Delta \subset \C^n$ and an identification
\[F_\Delta \simeq X_\Delta \setminus \{\text{points with non-trivial stabilizer}\} \subseteq X_\Delta\]
In general (and particularly in dimension four), more is known about symplectic embeddings involving toric domains than about their corresponding free domain. With this in mind, we prove the following enhancement of Thm.~\ref{thm:intro_kaveh}. 

\vspace{3pt}

To state our generalization, recall that there is a canonical sequence of polytopes $\Delta_m(Y,A,Y_\bullet)$ associated to any triple $(Y,A,Y_\bullet)$ as above, given by the convex hull of the image of $H^0(mA)$ under the valuation $\nu_{Y_\bullet}$. The normalizations $\frac{1}{m}\Delta_m(Y,A,Y_\bullet)$ converges to $\Delta(Y,A,\nu)$ as $m \to \infty$.

\begin{prop*}[Prop. \ref{thm:Kaveh_embeddings}] \label{prop:intro} Let $(Y,A)$ be a smooth, polarized variety with a flag $Y_\bullet$ and let $X \subset \intr{X_\Omega}$ be a subset of the interior of the toric domain with moment polytope
\[\Omega = \frac{1}{m}\Delta_m(Y,A,Y_\bullet) \qquad\text{for some }m\]Then there is a symplectic embedding $X \to (Y,\omega_A)$. 
\end{prop*}

\noindent The polytopes $\frac{1}{m}\Delta_m(Y,A,Y_\bullet)$ exhaust the Newton-Okounkov body $\Delta$. Therefore, Prop. \ref{prop:intro} morally states that embeddings into the interior of the toric domain $X_\Delta$ may be transfered to $Y$ itself.  We refer to the embeddings resulting from this construction as \emph{Kaveh embeddings}. 

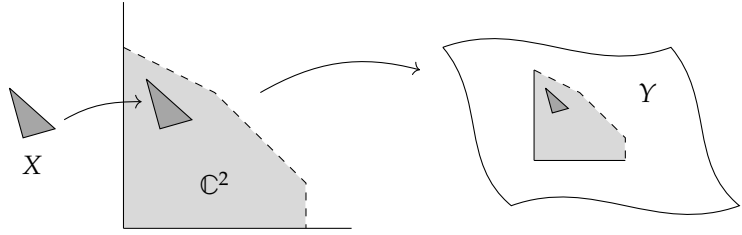
\begin{figure}[h]
    \centering
    \begin{tikzpicture}[scale=0.6]

    \draw[white,fill=gray!30] (8,0) to (8,4) to (10,3) to (12,1) to (12,0) to (8,0);
    \draw[dashed] (8,4) to (10,3) to (12,1) to (12,0);
    \draw (8,0) to (8,5);
    \draw (8,0) to (13,0);
    
    \draw[fill=gray!70] (8.5,3.3) to (9.5,2.4) to (8.8,2.2) to (8.5,3.3);
    
    \draw[fill=gray!70] (5.5,3.1) to (6.5,2.2) to (5.8,2) to (5.5,3.1);
    
    \draw (15,4) [out=310,in=140] to (16.5,0.5) [out=20,in=200] to (21.5,0.5) [out=140,in=310] to (20,4) to [out=200,in=20] (15,4);
    
    \draw[white,fill=gray!30] (17,1.5) to (17,3.5) to (18,3) to (19,2) to (19,1.5) to (17,1.5);
    \draw[dashed] (17,3.5) to (18,3) to (19,2) to (19,1.5);
    \draw (17,1.5) to (17,3.5);
    \draw (17,1.5) to (19,1.5);
    \draw[fill=gray!70] (17.25,3.1) to (17.75,2.65) to (17.4,2.55) to (17.25,3.1);
    
    \draw[->] (6.7,2.4) [out=30,in=180] to (8.4,2.8);
    \draw[->] (11,3) [out=30,in=165] to (14.5,3.5);
    
    \node (lx) at (6,1.4){\small $X$};
    \node (ly) at (19.5,3){\small $Y$};
    \node (la) at (10,1){\small $\C^2$};
    \end{tikzpicture}
    \caption{Transferring embeddings from $\intr{X_{\frac{1}{m}\Delta_m}}$ to $(Y,\omega_A)$}
    \label{fig:my_label}
\end{figure}

\noindent The proof largely follows Kaveh's original approach, by using a family of projective varieties acquired by deformation to the normal cone where the central fiber is symplectomorphic to $\C^2$ equipped with the toric symplectic form prescribed by $\Delta_m$.

\subsection{Algebraic capacities via Newton--Okounkov bodies} The second main result of this paper provides machinery for obstructing symplectic embeddings, by computing algebraic capacities combinatorially using Newton-Okounkov bodies. 

\vspace{3pt}

Let $(Y,A)$ be a \emph{pseudo-polarized rational surface}; i.e. a pair consisting of a smooth rational surface $Y$ and a big and nef (or ample) $\R$-divisor $A$ on $Y$. Then $Y$ (or a blowup of $Y$) can be presented as a tower of point blowups
\begin{equation} \tag{$\star$} \label{eqn:tower_intro}
    Y=Y_r\overset{\pi_r}{\longrightarrow} Y_{r_1}\overset{\pi_{r-1}}{\longrightarrow}\dots\overset{\pi_2}{\longrightarrow} Y_1\overset{\pi_1}{\longrightarrow}\pr^2
\end{equation}
with exceptional curve $E_i\subseteq Y_i$ of each $\pi_i$ so that there is a sequence of divisors with $A_0=\mO_{\pr^2}(c)$ and $A_i=\pi_i^*A_{i-1}-a_iE_i$ on $Y_i$ with $A_n=A$. We denote such a tower of polarized surfaces by $(\mathcal{Y},\mathcal{A})$. The \emph{algebraic weight sequence} of the tower $(\mathcal{Y},\mathcal{A})$ is then
\begin{equation} \tag{$+$} \label{eqn:wt_sequence_intro}
    \on{wt}(\mathcal{Y},\mathcal{A}):=(c;a_1,\dots,a_n)
\end{equation}
Note that there are many different presentations of $(Y,A)$ as a tower with potentially different weight sequences. We say that $(Y,A)$ \emph{admits} the weight sequence (\ref{eqn:wt_sequence_intro}) if it arises from a tower presentation of $(Y,A)$.

\vspace{3pt}

We show that, as for the combinatorial weight sequence and ECH capacities, the algebraic weight sequence determines the algebraic capacities of a polarized surface.

\begin{thm*}[Prop.~\ref{prop:bounds}] \label{thm:wt_seq_intro}
Let $(Y,A)$ be a pseudo-polarized rational surface. Assume that there is a pseudo-polarized rational surface $(Y_+,A_+)$ with the same weight sequence as $(Y,A)$ and with $-K_{Y_+}$ effective. Then
$$\calg_k(Y,A)=\calg_k(Y_+,A_+)$$
\end{thm*}

\noindent This provides a valuable computational tool for algebraic capacities on surfaces with poorly behaved nef cones; for instance, general blowups of $\pr^2$ in more than $8$ points. This is entirely analogous to, and actually implies, \cite[Thm.~A.1]{cri_sym_19} for ECH capacities.

\vspace{3pt}

We state the main consequence Thm.~\ref{thm:wt_seq_intro} has for algebraic capacities and their connection to ECH capacities for toric domains.

\begin{thm*}[Thm.~\ref{thm:alg_ech}] \label{thm:main_intro} Let $(Y,A)$ be a polarized rational surface. Suppose there is a Newton--Okounkov body $\Delta=\Delta(Y,A,\nu)$ whose weight sequence is admitted by $(Y,A)$. Then the algebraic capacities of $(Y,A)$ and the ECH capacities of $F_\Delta$ coincide.
\[\calg_k(Y,A) = \cech_k(F_\Delta)\]
If in addition $\Delta$ is \emph{$A$-generic} (see Def.~\ref{def:a_generic}) then $\Delta$ is a moment domain and so
\[\calg_k(Y,A) = \cech_k(X_\Delta)\]
\end{thm*}

\noindent Note that if $\Delta$ contains a neighborhood of the origin in $\R_{\geq0}^2$ then $\cech_k(F_\Delta) = \cech_k(X_\Delta)$. This is expressed algebraically by the (mild) condition that $\Delta$ is $A$-generic.

\vspace{3pt}

As an example consequence of Theorem \ref{thm:main_intro}, we can given algebraic formula for the Gromov width of some polarized rational surfaces. 

\begin{cor*} \label{cor:gromov_width} Let $(Y,A)$ be a polarized rational surface. Suppose there is a strongly convex Newton--Okounkov body $\Delta=\Delta(Y,A,\nu)$ whose weight sequence is admitted by $(Y,A)$. Then
\[
c_G(Y,\omega_A) = \calg_1(Y,A) = \inf_{D\in\on{Nef}(Y)_\Z}\{A\cdot D:I(D)\geq 2\}
\]
\end{cor*}

\begin{proof} The Gromov width and the first ECH capacity agree for strongly convex, free toric domains (cf. \cite{ghr_exa_22}), and so $c_G(F_\Delta) = \cech_1(F_\Delta)$. A ball that embeds into $F_\Delta$ also embeds into $(Y,\omega_A)$ by Thm. \ref{thm:intro_kaveh}. Therefore, we have
\[c_G(Y,\omega_A) \ge c_G(F_\Delta) = \cech_1(F_\Delta) = \calg_1(Y,A) \]
Finally, we apply the main theorem of \cite{cw_ech_20} to see that
\[c_G(Y,\omega_A) \le \calg_1(Y,A) \qedhere\]    
\end{proof}

\subsection{Applications} \label{subsec:applications_and_computations} Our main results yield a number of nice symplectic embedding results for non-toric projective surfaces.

\subsubsection{Computation of algebraic capacities} As a first application we are able to identify many non-toric rational surfaces where the conditions of Thm.~\ref{thm:main_intro} are met.

\vspace{3pt}

For each family of surfaces $(Y,A)$ we consider we find a Newton--Okounkov body $\Delta$ that completely calculates the algebraic capacities of $(Y,A)$. Moreover, via Thm.~\ref{prop:intro} we acquire embeddings $X_\Delta \to Y$ with complement of arbitrarily small volume. This analysis is carried out for many examples in \S\ref{sec:app}. In this introduction we will discuss a few of these computations, starting with the following example.

\begin{thm*}[Prop.~\ref{prop:example} + Prop.~\ref{prop:higher}] \label{thm:higher_intro}
Let $Y$ be a rational surface whose only curves of negative self-intersection are rational curves $C$ with $C^2=-1$. Let $A$ be an ample $\R$-divisor on $Y$ such that $(Y,A)$ admits a weight sequence $(c;a_1,\dots,a_n)$ with
$$c\geq\frac{n-3}{2}\sum_{i=1}^na_i$$
Then there is an $A$-generic Newton--Okounkov body $\Delta$ whose weight sequence is admitted by $(Y,A)$.
\end{thm*}

\noindent When $Y$ is a del Pezzo surface there are weaker inequalities depending on the Picard rank for $c,a_1,\dots,a_n$ to satisfy. All del Pezzo surfaces are covered by Thm.~\ref{thm:higher_intro}. If $n\leq 3$ then $Y$ must be a toric del Pezzo surface. In that case the inequality is vacuous and the result holds by direct calculation. 

\vspace{3pt}

A consequence of the well-known SHGH Conjecture \cite[Conj.~0.1]{def_neg_04} or \cite[Conj.~2.2.3]{chmr_var_13} gives that all blowups of $\pr^2$ in $n$ very general points also fit into the setting of Thm.~\ref{thm:higher_intro}. Recall that a set of $n$ points being ``very general'' means that it lies in the complement of a countable union of subvarieties in the configuration space of all sets of $n$ points.

\vspace{3pt}

As a second example, we formulate a general procedure for producing non-toric examples that our main results apply to. This procedure may be thought of as a ``genericization'', moving the centers of a tower of blowups to be in more general position.

\begin{thm*}[Prop.~\ref{prop:bounds} + Prop.~\ref{prop:flat}] \label{thm:compute_intro}
Let $(Y_\times,A_\times)$ be a pseudo-polarized toric surface presented as a tower as in (\ref{eqn:tower_intro}). Suppose $(Y,A)$ is a pair consisting of a rational surface $Y$ and an $\R$-divisor $A$ such that:
\begin{itemize}
    \item $(Y,A)$ can be presented as a tower with the same weight sequence as $(Y_\times,A_\times)$,
    \item the centres of the blowups for $Y$ are in more general position than those for $Y_\times$.
\end{itemize}
Then $A$ is big and nef and $\calg_k(Y,A)=\calg_k(Y_\times,A_\times)$.
\end{thm*}

\noindent One can think of `big and nef' as meaning `possibly degenerate symplectic form' and so the first part of Thm.~\ref{thm:compute_intro} states that $(Y,A)$ can indeed be regarded as an object of symplectic geometry. The second part relates the algebraic capacities of $(Y,A)$ -- which may be hard to compute as the nef cone of $Y$ could be very complicated -- to the algebraic capacities of $(Y_\times,A_\times)$, which are highly accessible by algebraic or symplectic methods reducing to lattice combinatorics  \cite{wor_ech_21,cri_sym_19,hut_qua_11}.

\subsubsection{Concave to non-toric embeddings} Our main results also provide an immediate generalization of the optimal embedding result of Cristofaro-Gardiner on symplectic embeddings of concave toric domains into convex ones to the closed, non-toric setting.

\begin{cor*} \label{cor:intro_concave_to_convex} Let $(Y,A)$ be a pseudo-polarized rational surface. Suppose that there is a Newton--Okounkov body $\Delta(Y,A,Y_\bullet)$ whose weight sequence is admitted by $(Y,A)$. Then, for any concave toric domain $X_\Omega$ the following are equivalent.
\begin{itemize}
    \item For any $r < 1$, there is a symplectic embedding $X_{r\Omega} \to (Y,\omega_A)$.
    \item The ECH capacities $\cech_k(X_\Omega)$ are bounded above by the algebraic capacities $\calg_k(Y,A)$.
\end{itemize}
\end{cor*}

\noindent This result applies to all of the examples computed in \S\ref{sec:app}; the del Pezzo surfaces and higher rank blowups in Thm.~\ref{thm:higher_intro}, and the genericizations of toric surfaces from Thm.~\ref{thm:compute_intro}). This is a rare example of a sharp result for symplectic embeddings into non-toric target manifolds.

\begin{remark} Variants of the existence result for symplectic embeddings in Cor.~\ref{cor:intro_concave_to_convex} can also be proven by adapting the proof methods of Cristofaro-Gardiner--Holm--Mandini--Pires (see in particular \cite[Rem.~3.6]{chmp_inf_20}) to the non-toric context. Our proof adheres to the general philosophy of this paper by utilizing Kaveh embeddings from Newton--Okounkov bodies to produce non-toric embeddings from toric ones.
\end{remark}

\subsubsection{Obstructing staircases} Recall that the \emph{ellipsoid embedding function} $f_X:\R_+ \to \R_+$ of a symplectic 4-manifold $X = (X,\omega)$ is the function
\[f_X(a) := \inf\{r \; : \; E(1,a) \to (X,r \cdot \omega)\}\]
A famous result of McDuff--Schlenk \cite{ms_emb_12} demonstrates the existence of certain \emph{infinite staircases} occurring in the ellipsoid embedding function of certain toric domains. An infinite staircase consists of the infinite sequence of points of nondifferentiability of $f_X$ accumulating to a finite point $a_0 \in \R_{\geq1}$.

\vspace{3pt}

Since the work of McDuff-Schlenk \cite{ms_emb_12} the characterization of spaces that possess an infinite staircase has attracted significant interest \cite{chmp_inf_20,mmw_sta_22}. As a final application we provide some initial applications of our machinery to the intricate problem of obstructing infinite staircases in non-toric rational surfaces.

\begin{prop*}[Prop.~\ref{prop:staircase}]
Let $(Y,A)$ be a polarized del Pezzo surface. The ellipsoid embedding function of $(Y,\omega_A)$ has no infinite staircase when:
\begin{enumerate}
    \item $Y$ has degree $3$ and $(Y,A)$ has weight sequence
    $$(c;1,1,1,1,1,1)\qquad\text{for $4\leq c<\frac{18+\sqrt{24}}{5}\approx 4.57$}$$
    \item $Y$ has degree $1$ and $(Y,A)$ has weight sequence
    $$(c;1,1,1,1,1,1,1,1)\qquad\text{for $6\leq c<\frac{24+\sqrt{96}}{5}\approx 6.76$}$$
\end{enumerate}
\end{prop*}

\subsection{Asymptotics via Zariski decomposition} \emph{Zariski decomposition} \cite{zar_the_62} splits a big divisor $A$ on a surface as
\[A=P+N\]
where $P$ is big and nef and $N$ is pseudo-effective. The primary feature of this decomposition is that all `positivity' information about $A$ (e.g.~section counts) is captured by $P$. Zariski decomposition will play a significant role in the proofs and computations of this paper.

\vspace{3pt}

We can also draw on Zariski decomposition to extend the algebraic Weyl law \cite[Thm.~4.2]{wor_alg_22} from polarized surfaces to \emph{weakly polarized surfaces}, i.e. a pair $(Y,A)$ where $Y$ is a $\Q$-factorial projective surface and $A$ is a big divisor. This brings in a central invariant in algebraic positivity: the \emph{volume} $\on{vol}(A)$ of a divisor $A$, which roughly tracks the growth of sections of $mA$ for large $m$.

\begin{thm*}[Prop.~\ref{prop:weyl_big}] \label{thm:weyl_intro} Suppose $(Y,A)$ is a smooth or toric weakly polarized surface. Then
$$\lim_{k\to\infty}\frac{\calg_k(Y,A)^2}{k}=2\on{vol}(A)$$
\end{thm*}

\noindent As with the algebraic Weyl law for big and nef divisors, Thm.~\ref{thm:weyl_intro} allows us to define \emph{error terms}
$$\ealg_k(Y,A):=\calg_k(Y,A)-\sqrt{2\on{vol}(A)k}$$
to capture the sub-leading asymptotics of $\calg_k(Y,A)$. In the case that $A=qA_0$ for some $\Z$-divisor $A_0$ and some $q\in\R_{>0}$ we can give a precise description of these sub-leading asymptotics, just as in the big and nef case \cite[Thm.~4.10]{wor_alg_22}.

\begin{thm*}[Cor.~\ref{cor:sublead}]
Let $(Y,A)$ be a weakly polarized surface that is either smooth or toric, and where $A$ is a real multiple of a $\Z$-divisor. Then the error term $\ealg_k(Y,A)$ is $O(1)$ and nonconvergent with explicitly computable lim inf and lim sup.
\end{thm*}

As with the big and nef case \cite[\S3.8]{wor_tow_22} it is an intriguing open question to more precisely characterise when convergence does and does not occur among the sub-leading asymptotics for algebraic capacities of weakly polarized surfaces.

\subsection{Future directions} We conclude by remarking on a few possible extensions of this work.

\begin{remark}[Abelian surfaces]
While the results of \cite{cw_ech_20} and hence of this paper do not apply to non-rational surfaces, we conclude with an example illustrating possible connections in the case of abelian surfaces (or four dimensional complex tori). The Newton--Okounkov body for a polarized abelian surface is a trapezium \cite[Ex.~6.5]{lm_con_09}, which describes a ruled surface over $\C\pr^1$ in toric algebraic geometry or a related convex toric domain from the perspective of \S\ref{sec:symp_emb}. Ball packings of abelian surfaces have been studied in two different ways using ball packings of associated ruled surfaces \cite{lms_gro_13,bir_sta_99} and it is plausible that this structure is being reflected in their Newton--Okounkov theory.
\end{remark}

\begin{remark}[Other connections] To obtain a wider class of sharp embedding results we need to construct Newton--Okounkov bodies with prescribed weight sequences. We hope that the Newton--Okounkov bodies coming from cluster varieties \cite{bcmn} or from a finer understanding of the relationship between Newton--Okounkov bodies and toric degenerations \cite{and_oko_13} will allow for these next steps. It would also be interesting to understand the effect of blowup and perturbation on the global Newton--Okounkov body \cite[Thm.~B]{lm_con_09} as a way of potentially systematising the process of computing weight sequences of Newton--Okounkov bodies that we have developed here.
\end{remark}

\begin{remark}[Limitations]
    We make the cautionary note that using Zariski decomposition to calculate Newton--Okounkov bodies is currently a radically difficult task in general, and that we were forced to use the still-open SHGH Conjecture to complete our applications to rational surfaces of higher Picard rank. We emphasise that there is no intrinsic reason why our methods should not generalise far beyond the applications presented here, indeed this seems likely as our understanding of Newton--Okunkov bodies continues to grow.
\end{remark}

\subsection*{Acknowledgements} The authors are very grateful to Alex K\"uronya for frequent helpful conversations around the subject of this paper. They are also grateful for input from Dave Anderson, Laura Escobar, Tara Holm, Tim Magee, Nicki Magill, Dusty Ross, and Morgan Weiler.

\section{Algebraic preliminaries} \label{sec:alg} In this section we give a detailed discussion of the tools from posivity in algebraic geometry that we will use in this paper: Newton--Okounkov bodies and Zariski decomposition.

\subsection{Birational geometry} We start by recalling some relevant motivation and terminology from birational algebraic geometry. In this section, we let $\bK$ denote one of the rings $\Z,\Q$ or $\R$.

\vspace{3pt}

 A \emph{Weil} $\bK$\emph{-divisor} on a projective surface $Y$ is a formal $\bK$-linear combination of irreducible codimension one subvarieties of $Y$. We often omit \emph{Weil} from our language. We say that a Weil $\Z$-divisor $D$ on $Y$ is $\Q$\textit{-Cartier} if some integer multiple of $D$ is Cartier; that is, $D$ is the zero scheme of a section of a line bundle on $Y$. More generally, a Weil $\R$-divisor is $\Q$-Cartier if it can be expressed as an $\R$-linear combination of $\Q$-Cartier $\Z$-divisors. $Y$ is said to be $\Q$\textit{-factorial} if every Weil $\Z$-divisor on $Y$ is $\Q$-Cartier. 
 
 \begin{example} Every toric surface is $\Q$-factorial, as is every smooth projective surface. \end{example}
 
 \begin{notation} We fix the following notation for various groups and cones of divisors.
\begin{itemize}
\item $\on{Div}(Y)_\bK$ is the group of the Weil divisors on a $\Q$-factorial surface $Y$ with $\bK$-coefficients.
\vspace{3pt}
\item $\on{NS}(Y)_\bK:=\on{NS}(Y)\otimes_\Z\bK$ is the N\'eron--Severi group of $Y$, i.e. the group of integral Weil divisors up to algebraic equivalence tensored with $\bK$.
\vspace{3pt}
\item $\on{NE}(Y)_\bK$ is the cone of effective $\bK$-divisors inside the N\'eron--Severi group $\on{NS}(Y)_\bK$.
\vspace{3pt}
\item $\neb(Y)_\bK$ is the pseudo-effective cone of $Y$, i.e. the closure of the $\on{NE}(Y)_\bK$ in $\on{NS}(Y)_\bK$.
\vspace{3pt}
\item $\on{Nef}(Y)_\bK$ is the cone of divisor classes in $\on{NS}(Y)_\bK$ corresponding to nef divisors (namely, those divisors intersecting nonnegatively with every effective divisor)
\vspace{3pt}
\item $\on{Amp}(Y)_\bK$ is the cone of ample divisor classes in $\on{NS}(Y)_\bK$, i.e. those divisors intersecting positively with every effective divisor.
\vspace{3pt}
\item $\on{Big}(Y)$ is the group of big divisor classes, i.e. those in the interior of the effective cone.
\vspace{3pt}
\item $\R \cdot \on{Div}(Y)_\Z$ is the set of \emph{$\R \cdot \Z$-divisors} $D = c \cdot D'$ where $c \in \R$ and $D'$ is a $\Z$-divisor.
\end{itemize}
\end{notation}
\noindent There is a natural \emph{round-down} operation from $\R$-divisors to $\Z$-divisors that we denote by
\[\on{Div}(Y)_\R \to \on{Div}(Y)_\Z \qquad D = \sum_i a_i D_i \mapsto \lf D \rf := \sum_i \lf a_i \rf D_i\]
where $D_i$ are irreducible codimension one subvarieties of $Y$. 

\vspace{3pt}

\begin{notation} We let $\mO_Y(D)$ be the associated sheaf to a $\Z$-divisor (class) $D$ and $H^i(D) = H^i(\mO_Y(D))$ be the corresponding sheaf cohomology of dimension $h^i(D)$. 
\end{notation} 

\begin{definition}
Let $(Y,A)$ be a pair consisting of a $\Q$-factorial surface $Y$ and an $\R$-divisor $A$. We say that $(Y,A)$ is
\begin{itemize}
    \item a \emph{weakly polarized surface} if $A$ is big,
    \item a \emph{pseudo-polarized surface} if $A$ is big and nef,
    \item a \emph{polarized surface} if $A$ is ample.
\end{itemize}
Note that ample implies big and nef, so that polarized surfaces are pseudo-polarized and pseudo-polarized surfaces are weakly polarized.
\end{definition}

\subsection{Newton--Okounkov bodies} \label{sec:no} We next review the theory of Newton--Okounkov bodies \cite{oko_bru_96,lm_con_09,kk_new_12}, which are convex bodies that encode asymptotic information about sections of line bundles.

\vspace{3pt}

Fix a projective variety $Y$ and a valuation $\nu\colon\C(Y)\to\Z^n$. Note that we only require $\nu$ to be defined on $\C(Y)\setminus\{0\}$ to avoid compactifying $\Z^n$. The valuation restricts to a map
\[\nu\colon H^0(A) \subseteq \C(Y) \to \Z^n \qquad\text{for any divisor }A\]
Here we use the identification of global sections of $\mathcal{O}_Y(A)$ with rational functions with vanishing locus $A$. Let $\Delta_m(Y,A,\nu)$ denote the convex hull of the image of $H^0(mA)$ under $\nu$:
\begin{equation} \label{eqn:Delta_k}
\Delta_m(Y,A,\nu) := \on{conv}\big(\nu\left(H^0(mA)\right)\big)
\end{equation}

\begin{definition} \label{def:no_body}
The \emph{Newton--Okounkov body} $\Delta(Y,A,\nu)$ associated to $(Y,A,\nu)$ is defined by
\[\Delta(Y,D,\nu):=\ol{\bigcup_k \frac{1}{k} \cdot \Delta_k(Y,D,\nu)}\]
\end{definition}

\begin{remark} We note that, while explicit, Def.~\ref{def:no_body} does not assist much in the calculation of Newton--Okounkov bodies, which is typically very subtle. \end{remark}

\noindent Flags of subvarieties provide the main source of valuations. This will be the only type of valuation considered in this paper.

\begin{definition} A \emph{flag} $Y_\bullet$ in a projective variety $Y$ is a sequence of projective subvarieties
\[Y_0 \subseteq Y_1 \subseteq \dots \subseteq Y_n = Y \qquad \text{where}\qquad \on{dim}(Y_i) = i\]
A flag $Y_\bullet$ is \emph{locally smooth} if each $Y_i$ is smooth in a neighborhood of the point $Y_0$. We will assume this condition unless otherwise stated. \end{definition}

\begin{definition}
The valuation $\nu_{Y_\bullet}$ associated to a locally smooth flag $Y_\bullet$ is recursively given by the formula
$$\nu_{Y_\bullet}(s) := (\on{ord}_{Y_{n-1}}(s), \nu_{Z_\bullet}(s|_{Y_{n-1}}))$$
where $\nu_{Z_{\bullet}}(s|_{Y_{n-1}})$ is the valuation associated to the flag $Z_\bullet = Y_0 \subseteq \dots \subseteq Y_{n-1}$ on $Y_{n-1}$ evaluated on the rational function $s|_{Y_{n-1}}$.
\end{definition}

\begin{remark} When $Y$ is a surface every valuation arises in this way from a flag $\wt{Y}_\bullet$ on some birational model $\pi\colon\wt{Y}\to Y$ where $\pi$ is a series of blowups.
\end{remark}

\begin{notation} In the case of the valuation associated to a flag we will adopt the notation \[\Delta(Y,A,Y_\bullet) := \Delta(Y,A,\nu_{Y_\bullet}) \qquad \text{and}\qquad \Delta_m(Y,A,Y_\bullet) := \Delta_m(Y,A,\nu_{Y_\bullet})\]
\end{notation}

In the toric setting the Newton--Okounkov body generalizes the moment polytope by the following result.

\begin{prop}[\!{\cite[Prop.~6.1]{lm_con_09}}] Let $Y$ be a toric variety with torus-invariant ample divisor $A$ and moment polytope $\Omega$. Then for any torus-invariant flag $Y_\bullet$ we have
\[\Delta(Y,A,Y_\bullet) = \Omega\]
\end{prop}

\noindent One expects this since, essentially by definition, if $\Omega$ is the moment polytope for a polarized toric variety $(Y,A)$ then lattice points in $m\Omega$ correspond to torus-invariant sections in $H^0(\mO_Y(mA))$. Thus the rescaling by $\frac{1}{m}$ will just return $\Omega$ in the case that it is a lattice polytope. 

\subsection{Zariski decomposition} \label{sec:zariski} A related structure in algebraic positivity is Zariski decomposition, which splits a divisor on a surface into two pieces that each store positivity information.

\begin{definition}
Let $Y$ be a smooth surface and let $D$ be a pseudo-effective $\R$-divisor on $Y$. The \emph{Zariski decomposition} of $D$ is the unique expression
$$D=P+N$$
where $P$ is nef, $N$ is pseudo-effective, and $P\cdot N'=0$ for every irreducible component $N'$ of $N$. We call $P$ the \emph{positive part} and $N$ the \emph{negative part} of $D$.
\end{definition}

We state several properties of the Zariski decomposition into that we will use later.

\begin{prop} \label{prop:zariski_decomposition} Let $D$ be a pseudo-effective $\R$-divisor with Zariski decomposition
\[D = P + N\]
Then $P$ and $N$ have the following properties.
\begin{itemize}
    \item (Coefficients) If $D$ is a $\Z$ or $\Q$-divisor then $P$ and $N$ are $\Q$-divisors.
    \vspace{3pt}
    \item (Bigness) If $D$ is big then $P$ is also big.
    \vspace{3pt}
    \item (Maximality) $P$ is the largest nef $\R$-divisor such that $P \le D$. 
    \vspace{3pt}
    \item (Moduli)  The moduli of $D$ are captured by $P$ in the sense that
    \[h^0(mD)=h^0(mP) \qquad \text{for all }m \in \N\]
\end{itemize}
\end{prop}

\noindent The fourth property is a famous result of Zariski \cite{zar_the_62} and motivates using Zariski decomposition to study positivity questions.

\vspace{3pt}
The Zariski decomposition is also a powerful tool for studying volume. Recall that the \emph{volume} $\on{vol}(A)$ of a big $\R$-divisor $A$ on a variety $Y$ of dimension $n$ is given by
$$\on{vol}(A):=\lim_{m\to\infty}\frac{h^0(mA)}{m^n/n!}$$
This limit exists \cite[\S11.4]{laz_pos_17} and measures the leading asymptotics of section counts for multiples of $A$. The following lemma of Zariski is classical and is key to the interaction of Zariski decomposition and positivity.

\begin{lemma}[\!{\cite{zar_the_62}}] \label{lem:volume_zariski} Let $A$ be a big $\R$-divisor with Zariski decomposition $A=P+N$. Then $\on{vol}(A)=P^2$.
\end{lemma}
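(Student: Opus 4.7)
The plan is to use the moduli property of Zariski decomposition to replace $A$ by the nef positive part $P$, and then compute $\on{vol}(P)$ on the surface $Y$ via asymptotic Riemann--Roch. Concretely, the moduli clause of Prop.~\ref{prop:zariski_decomposition} gives $h^0(mA)=h^0(mP)$ for every $m \in \N$, so the very definition
$$\on{vol}(A)=\lim_{m\to\infty}\frac{h^0(mA)}{m^2/2}$$
yields $\on{vol}(A)=\on{vol}(P)$ with no further work. The task therefore reduces to proving the identity $\on{vol}(P)=P^2$ for an arbitrary nef big $\R$-divisor $P$ on $Y$.

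I would carry out this remaining step in three stages. When $P$ is a $\Z$-divisor, asymptotic Riemann--Roch on the surface gives
$$\chi(mP)=\tfrac{1}{2}P^2\,m^2-\tfrac{1}{2}(P\cdot K_Y)\,m+\chi(\mO_Y),$$
while Serre duality forces $h^2(mP)=h^0(K_Y-mP)=0$ for $m\gg 0$ (since $P$ is big), and a vanishing theorem for nef and big divisors (for instance a consequence of Fujita vanishing) bounds $h^1(mP)=O(m)$. Isolating the top coefficient yields $h^0(mP)=\tfrac{1}{2}P^2\,m^2+O(m)$, i.e.~$\on{vol}(P)=P^2$. The homogeneity $\on{vol}(cP)=c^2\on{vol}(P)$ then clears denominators to promote the identity to nef big $\Q$-divisors. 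Finally, since the nef $\Q$-divisors are dense in the nef cone and both $\on{vol}$ and the intersection pairing extend continuously to $\on{NS}(Y)_\R$, approximating a general nef big $\R$-divisor $P$ by rational points closes out the remaining case.

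The only nontrivial input is the intermediate cohomology bound $h^1(mP)=o(m^2)$ for nef big $P$, and this is where I expect the main technical work to sit. Everything else is formal: the reduction to $P$ follows directly from the moduli property, the Riemann--Roch calculation is purely algebraic, and the passage from $\Z$- to $\Q$- to $\R$-divisors is a routine density/continuity argument leveraging the continuity of $\on{vol}$ on the big cone.
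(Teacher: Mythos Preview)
The paper does not supply a proof of this lemma; it is recorded as a classical result of Zariski with a bare citation to \cite{zar_the_62}. Your sketch is correct and is the standard argument: the moduli clause of Prop.~\ref{prop:zariski_decomposition} reduces to computing $\on{vol}(P)$ for nef big $P$, and asymptotic Riemann--Roch plus homogeneity and continuity of volume on the big cone finishes it. One small sharpening: on a smooth surface you can dispense with Fujita and invoke Kawamata--Viehweg directly, since for $m\gg 0$ the divisor $mP-K_Y$ is nef and big and hence $h^1(mP)=0$ outright, not merely $o(m^2)$.
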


\subsection{Zariski chambers} \label{sec:chamber} The two positivity structures introduced above -- Newton--Okounkov bodies and Zariski decomposition -- interact through the notion of Zariski chambers. 

\vspace{3pt}

Introduced by Bauer--K\"uronya--Szemberg \cite{bks_zar_04}, Zariski chambers decompose the big cone of a smooth projective surface $Y$ in such a way that the Zariski decomposition is appropriately 'constant' within chambers. This also governs the points of nondifferentiability along the boundary of Newton--Okounkov bodies on $Y$; i.e.~their vertices. 

\vspace{3pt}

To define Zariski chambers, let $D = P + N$ be the Zariski decomposition of a pseudo-effective divisor so that $P$ is nef and $N$ is pseudo-effective. Denote the set of \emph{null curves} of $D$ by
\[\on{Null}(D):=\{C\text{ irreducible curve}:D\cdot C=0\}\]
Also denote the set of irreducible components of the negative part of $D$ by 
\[\on{Neg}(D):=\{\text{irreducible components of $N$}\}\]
Note that $\on{Neg}(D)\subseteq\on{Null}(P)$ by the construction of Zariski decomposition.

\begin{definition} The \emph{Zariski chamber} $\Sigma_P \subset \on{Big}(Y)$ of a big and nef divisor $P$ is defined as follows.
\[\Sigma_P :=\{D\in\on{Big}(Y):\on{Neg}(D)=\on{Null}(P)\}\]
The decomposition $\on{Big}(Y) = \bigsqcup_P \Sigma_P$ is called the \emph{Zariski decomposition} of the big cone of $Y$. \end{definition}

Intuituvely, a Zariski chamber consists of big divisors for which the support of their negative part is held constant. The chambers are convex cones that are typically neither open nor closed, and provide a locally finite cover of the big cone \cite[Lem.~1.4]{bks_zar_04}. 

\begin{example} \label{ex:-1_curves}
Suppose $Y$ is a rational surface whose only negative curves are $(-1)$-curves; i.e.~if $C\subseteq Y$ has $C^2<0$ then $C^2=-1$. This class includes all del Pezzo surfaces. The Zariski chambers for $Y$ are described by the restriction of the hyperplane arrangement $\{C^\perp\}_{C^2<0}$ consisting of the orthogonal complements $C^\perp:=\{D\in\on{Pic}(Y)_\R:D\cdot C=0\}$ for each irreducible negative curve on $Y$ to the big cone \cite[Prop.~3.4]{bks_zar_04}.
\end{example}

The Zariski chamber decomposition controls the boundary of Newton--Okounkov bodies of surfaces in very precise terms due to the foundational work of Lazarsfeld--Musta\c{t}\u{a} \cite{lm_con_09}. Precisely, fix a projective surface $Y$, a flag $Y_\bullet = \{y \in Y_1 \subset Y\}$ and a big divisor $A$. Consider the divisors
\[A_t := A - tY_1 \qquad \text{with Zariski decomposition} \qquad A_t = N_t + P_t \]
Let $\mu$ be the supremum over $t$ such that $A_t$ is in the big cone. 

\begin{thm}[\!{\cite[Thm.~6.4]{lm_con_09}}] The Newton-Okounkov body $\Delta(Y_\bullet,A)$ is given by
\[
\Delta(Y,A,Y_\bullet) = \big\{(t,h)\in\R^2 \;:\;0\leq t\leq\mu\quad\text{and}\quad\alpha_{Y_\bullet}(A,t)\leq h\leq\beta_{Y_\bullet}(A,t)\big\}
\]
where $\alpha_{Y_\bullet}(A,\cdot)$ and $\beta_{Y_\bullet}(A,\cdot)$ are continuous and piecewise linear in $t$ on any interval $[0,\mu']$ with $\mu' < \mu$. 
\end{thm}

The corners (i.e. points of nondifferentiability) of $\alpha_{Y_\bullet}$ and $\beta_{Y_\bullet}$ are governed by the Zariski chamber decomposition. Morally, corners occur precisely at values of $t$ where $A_{t+\eps}$ lies in a different Zariski chamber to $A_t$ for all small $\eps>0$, which the following result makes precise.

\begin{lemma}[\!{\cite[Thm.~B]{klm_con_12}}] With the setup above we have:
\begin{itemize}
    \item the map $\alpha_{Y_\bullet}(A,\cdot)$ is not differentiable at $t$ if and only if, for every $\epsilon > 0$, there is a curve $C$ with
    \[C \in \on{Neg}(A_{t+\epsilon}) \setminus \on{Neg}(A_t) \qquad\text{and}\qquad y \in C\]
    \item the map $\beta_{Y_\bullet}(A,\cdot)$ is not differentiable at $t$ if and only if, for every $\epsilon > 0$, there is a divisor $C$ with
    \[C \in \on{Neg}(A_{t+\epsilon}) \setminus \on{Neg}(A_t) \qquad\text{and}\qquad y \not\in C\]
\end{itemize}
\end{lemma}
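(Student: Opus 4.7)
The plan is to build on the Lazarsfeld--Musta\c{t}\u{a} description of the boundary functions, which expresses them explicitly in terms of the Zariski decomposition $A_t = P_t + N_t$:
\[
\alpha_{Y_\bullet}(A,t) = \on{ord}_y(N_t|_{Y_1})
\qquad\text{and}\qquad
\beta_{Y_\bullet}(A,t) = \on{ord}_y(N_t|_{Y_1}) + P_t \cdot Y_1,
\]
valid wherever $Y_1 \not\subset \on{Supp}(N_t)$ (the exceptional case, where $Y_1$ becomes a component of the negative part, occurs only at the right endpoint $t = \mu$ and is handled separately). On any open subinterval where $A_t$ stays within a single Zariski chamber, both $P_t$ and $N_t$ depend linearly on $t$, so $\alpha$ and $\beta$ are affine there. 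Consequently, a non-differentiability at $t$ can only occur when $A_t$ crosses into a new Zariski chamber, which by the Bauer--K\"uronya--Szemberg definition is precisely when $\on{Neg}(A_{t+\epsilon}) \neq \on{Neg}(A_t)$ for all sufficiently small $\epsilon > 0$.

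At such a wall-crossing, I would compare the left and right slopes of each boundary function at $t$, separating the contribution of each irreducible component of $N_t$ and $N_{t+\epsilon}$. For $\alpha$, the slope of $\on{ord}_y(N_t|_{Y_1})$ picks up contributions only from components passing through $y$. A newly introduced curve $C \in \on{Neg}(A_{t+\epsilon}) \setminus \on{Neg}(A_t)$ enters with coefficient moving at a strictly positive rate, so such a $C$ produces a strictly larger right slope for $\alpha$ precisely when $y \in C$. Any curve leaving $\on{Neg}$ as $\epsilon \to 0^+$ is treated symmetrically, and this yields the stated characterization for $\alpha$.

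For $\beta$, I would use $P_t \cdot Y_1 = A_t \cdot Y_1 - N_t \cdot Y_1 = (A \cdot Y_1) - tY_1^2 - N_t \cdot Y_1$ to rewrite
\[
\beta_{Y_\bullet}(A,t) = (A \cdot Y_1) - tY_1^2 + \on{ord}_y(N_t|_{Y_1}) - N_t \cdot Y_1.
\]
The first two terms are globally affine in $t$, so the slope jump of $\beta$ is controlled by $\on{ord}_y(N_t|_{Y_1}) - N_t \cdot Y_1$. For a newly introduced component $C$ of $N_{t+\epsilon}$, the contribution to this jump is proportional to
\[
\on{ord}_y(C|_{Y_1}) - C \cdot Y_1 = -\sum_{z \in C \cap Y_1,\; z \ne y}\on{ord}_z(C|_{Y_1}),
\]
which vanishes exactly when $C \cap Y_1 \subseteq \{y\}$ and is strictly negative otherwise. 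For an admissible flag this vanishing corresponds to $y \in C$, whereas a new component with $y \notin C$ and $C \cdot Y_1 > 0$ forces a strict downward slope jump, giving the characterization for $\beta$.

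The main obstacle is handling simultaneous wall-crossings: several curves can enter or leave $\on{Neg}(A_t)$ at the same value of $t$, so one must rule out accidental cancellation among their slope contributions. This is where the negative-definiteness of the intersection form restricted to the span of the irreducible components of the negative part (classical in the construction of Zariski decomposition) comes in: it forces each newly-entering component to have a strictly positive rate of change of its coefficient in $N_{t+\epsilon}$, so the contributions from distinct new curves all have a definite sign and cannot cancel. Once this rigidity is in place, both characterizations read off directly from the sign computations above.
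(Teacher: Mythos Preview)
The paper does not prove this lemma at all; it is quoted verbatim from \cite[Thm.~B]{klm_con_12}, so there is no in-paper argument to compare against. Your sketch is in fact close to the strategy of the original K\"uronya--Lozovanu--Maclean proof: express $\alpha$ and $\beta$ via the Lazarsfeld--Musta\c{t}\u{a} formulas, note piecewise linearity of $P_t,N_t$ inside Zariski chambers, and compute one-sided slopes at a wall.

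That said, there is a genuine gap in your $\beta$ analysis. You compute that a newly introduced component $C$ contributes $\on{ord}_y(C|_{Y_1}) - C\cdot Y_1$ to the slope jump of $\beta$, and observe this is zero exactly when $C\cap Y_1\subseteq\{y\}$. You then assert that ``for an admissible flag this vanishing corresponds to $y\in C$'', but that is false: the case $C\cap Y_1=\emptyset$ (so $y\notin C$ and $C\cdot Y_1=0$) also gives a zero contribution. By your own computation such a $C$ would leave $\beta$ differentiable, which breaks the ``only if'' direction of the second bullet as stated. To close this you must either argue that a curve entering $\on{Neg}(A_{t+\epsilon})$ at a genuine wall for the ray $A-tY_1$ necessarily meets $Y_1$ (this requires work and is not automatic), or acknowledge that the statement as written is slightly coarse and the precise characterisation in \cite{klm_con_12} already builds this in.

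A smaller point: your remark that ``any curve leaving $\on{Neg}$ as $\epsilon\to 0^+$ is treated symmetrically'' is unnecessary, since along the ray $t\mapsto A-tY_1$ the support of the negative part is non-decreasing (a standard consequence of the variational characterisation of Zariski decomposition). Removing this simplifies the cancellation discussion.
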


\begin{remark} The Zariski decomposition is locally finite and bears many similarities to the natural chamber decomposition arising from algebraic capacities \cite[\S3]{wor_alg_22}; e.g.~in \cite[Prop.~1.14]{bks_zar_04} the local finiteness of the Zariski chamber decomposition is used to prove continuity for Zariski decomposition while in \cite[Cor.~3.2]{wor_alg_22} the local finiteness of the chamber decomposition from algebraic capacities is used to show their continuity. Understanding any relationship between the two -- especially in light of the results of \S\ref{sec:zariski_methods} -- is an interesting problem. \end{remark}

\section{Symplectic embeddings } \label{sec:symp_emb} In this section, we develop a variation of a construction of certain symplectic embeddings that uses deformations (i.e. families of projective varieties over the complex line) and Newton--Okounkov bodies, due to Kaveh \cite{kav_tor_19}. We will refer to these as \emph{Kaveh embeddings}. 

\subsection{Basic notions} Recall that a \emph{symplectic manifold} $X = (X,\omega)$ is a smooth manifold $X$ equipped with a closed non-degenerate $2$-form $\omega$, called a \emph{symplectic form}.

\begin{definition} A \emph{symplectic embedding} $\iota\colon X \to X'$ between symplectic manifolds $(X,\omega)$ and $(X',\omega')$ of the same dimension is a smooth embedding that satisfies $\iota^*\omega' = \omega$. We write 
\[X \se X'\]
when a symplectic embedding $X \to X'$ exists, without specifying the map. \end{definition}

The main tools for obstructing the existence of symplectic embeddings are symplectic capacities. A \emph{symplectic capacity} $c$ is a numerical invariant of some class of symplectic manifolds such that
\[c(X,\omega) \le c(X',\omega') \quad\text{if}\quad X \se X' \qquad\text{and}\qquad c(X,a\omega) = a \cdot c(X,\omega)\]
Typically an additional normalization constraint is included to rule out trivial capacities.

\vspace{3pt}

We will primarily focus on the \emph{ECH capacities} of a symplectic 4-manifold $(X,\omega)$ originally introduced by Hutchings using embedded contact homology \cite{hut_qua_11,hut_lec_14}. We denote these capacities by
\[\cech_k(X,\omega) \in (0,\infty] \qquad \text{for}\qquad k \in \N\]
These capacities have many useful properties beyond the usual axioms of capacities. Most pertinent to this paper is the following \emph{volume property}.

\begin{thm}[\!{\cite[Thm.~1.1]{chr_asy_15}}] \label{thm:volume_property} Let $(X,\omega)$ be a Liouville domain. Then
\[\lim_{k\to\infty}\frac{\cech_k(X,\omega)^2}{k}=4\on{vol}(X,\omega)\]
\end{thm}

\noindent Note that Thm.~\ref{thm:volume_property} extends to closed manifolds $X$ that admit symplectic embeddings from Liouville domains $W$ with volume arbitrarily close that of $X$. We can capture the sub-leading asymptotic behavior of the ECH capacities by defining
$$e_k(X,\omega):=\cech_k(X,\omega)-\sqrt{4\on{vol}(X,\omega)k}$$
There has been much study of the asymptotics of these error terms via symplectic methods \cite{cs_sub_20,sun_est_19,hut_ech_22} and algebraic methods \cite{wor_alg_22,wor_tow_22}.

\subsection{Toric domains} There is a rich class of symplectic manifolds that will be particularly important in this paper. Consider the standard moment map
\[\mu\colon\C^n \to \R_{\geq0}^n \qquad \mu(z_1,\dots,z_n) = \pi \cdot (|z_1|^2,\dots,|z_n|^2)\]

\begin{definition} \label{def:moment_domain} A \emph{moment domain} $\Omega \subseteq\R_{\geq0}^n$ is any domain containing a neighborhood of the origin. The associated \emph{toric domain} $X_\Omega$ and \emph{open free domain} $F_\Omega$ are given by
\[X_\Omega := \mu^{-1}(\Omega) \quad\text{and}\quad F_\Omega := \mu^{-1}(\intr{\Omega})\]
Here $\intr{\Omega}$ denotes the interior of $\Omega$ as a subset of $\R^n$. We say that $\Omega$ is
\begin{itemize}
    \item \emph{(weakly) convex} if $\Omega$ is convex as a subset of $\R_{\geq0}^n$.
    \vspace{2pt}
    \item \emph{concave} if the complement of $\Omega$ is convex as a subset of $\R_{\geq0}^n$.
    \vspace{2pt}
    \item \emph{integral} if it is the convex hull of a finite set points in $\Z^n$.
    \vspace{2pt}
    \item \emph{rational} if $m \cdot \Omega$ is integral for some $m \in \N$.
    \item \emph{rational-sloped} if it is a polytope such that there is an integral vector normal to each facet.
\end{itemize}\end{definition}

\begin{example}
If $\Omega=\Delta(a)$, the triangle with vertices $(0,0),(a,0),(0,a)$, then $X_\Omega=B^4(a)$. More generally, if $\Omega$ is the triangle with vertices $(0,0),(a,0),(0,b)$ then $X_\Omega$ is the ellipsoid with symplectic radii $a$ and $b$ given by
$$X_\Omega=E(a,b):=\left\{(z_1,z_2)\in\C^2:\frac{\pi|z_1|^2}{a}+\frac{\pi|z_2|}{b}\leq 1\right\}$$
\end{example}

\begin{figure}[h]
\caption{Concave and convex moment domains}
\label{fig:domains}

\begin{center}
\begin{tikzpicture}[scale=0.6]

\draw[fill=gray!30] (0,0) to (0,5) to (1,2) to (2,1) to (4,0) to (0,0);

\draw[fill=gray!30] (8,0) to (8,4) to (10,3) to (12,1) to (12,0) to (8,0);
\draw (8,4) to (8,5);
\draw (12,0) to (13,0);

\foreach \i in {0,...,4}
{
\foreach \j in {0,...,5}
{\node (\i\j) at (\i,\j){\tiny $\bullet$};
\node (\i\j) at (8+\i,\j){\tiny $\bullet$};
}
\node (\i) at (13,\i){\tiny $\bullet$};
}

\node (5) at (13,5){\tiny $\bullet$};

\node (la) at (2,-0.8){(a)};
\node (lb) at (10.5,-0.8){(b)};
\end{tikzpicture}
\end{center}
\end{figure}
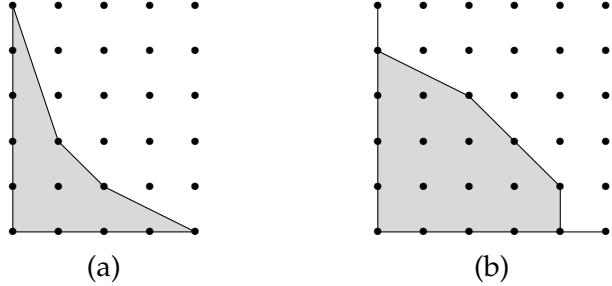

Toric domains are related to toric varieties from algebraic geometry as follows. Let $\Omega \subseteq \R^n$ be an integral polytope; i.e. a polytope whose vertices lie on $\Z^n$. Let
\[
S := \Omega \cap \Z^n \subset \Z^n
\]
Choose an ordering $s_1,\dots,s_r$ of the elements of $S$ and consider the map
\[\iota_S:\C^n \to \P^{r-1} \qquad \text{given by}\qquad \iota_S(z_1,\dots,z_n) = [z^{s_1},z^{s_2},\dots,z^{s_r}] \in \P^{r-1}\]
Here $z^s$ for $s \in \Z^n$ denotes the product
\[z^s = z_1^{s_1}\dots z_n^{s_n}\]
The closure of $\iota_S(\C^n)$ is precisely the toric variety $Y_\Omega$ associated to the polytope $\Omega$ and the ample divisor $A_\Omega$ is the intersection of this closure with the hyperplane at infinity $\P^{r-2} \subseteq \P^{r-1}$ \cite[Ch.~2]{cls_new_15}. The link between the toric domain and toric variety is given by the following.

\begin{lemma}[\!{\cite[\S1.2]{cw_ech_20}}] \label{lem:toric_spaces_equal} Let $\Omega$ be a moment domain that is a convex lattice polytope and let $\iota_S\colon\C^n \to \P^{r-1}$ be the corresponding map to projective space. Then
\[
\on{int}(X_\Omega) \simeq \iota_S(\C^n) = Y_\Omega \setminus A_\Omega
\]
\end{lemma}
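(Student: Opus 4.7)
The plan is to unravel the definitions on both sides and verify compatibility, splitting the statement into a set-theoretic identification and a symplectomorphism.

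First I would establish the equality $\iota_S(\C^n) = Y_\Omega \setminus A_\Omega$. Since $\Omega$ is a moment domain, it contains a neighborhood of the origin, so $0 \in S = \Omega \cap \Z^n$; we may order $S$ so that $s_1 = 0$. Then for every $z \in \C^n$, the first homogeneous coordinate of $\iota_S(z)$ is $z^{s_1} = 1$, which never vanishes, so $\iota_S(\C^n)$ lies in the affine open chart $U_1 = \{[y_1:\dots:y_r] : y_1 \neq 0\} \subset \P^{r-1}$. By the construction of $Y_\Omega$ as the Zariski closure of $\iota_S((\C^\times)^n)$ and of $A_\Omega$ as its intersection with the hyperplane at infinity (which in the chosen ordering is $\{y_1 = 0\}$), we have $Y_\Omega \setminus A_\Omega = Y_\Omega \cap U_1$. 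The inclusion $\iota_S(\C^n) \subseteq Y_\Omega \cap U_1$ is immediate. For the reverse inclusion, I would invoke the standard identification of $Y_\Omega \cap U_1$ with the affine toric variety $\on{Spec}\,\C[S^{\on{sat}}]$, together with the fact that the coordinate functions $y_i/y_1$ pull back to $z^{s_i}$ under $\iota_S$, so any point of $Y_\Omega \cap U_1$ is uniquely determined by a choice of $z \in \C^n$.

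Second, I would establish the symplectic identification $\on{int}(X_\Omega) \simeq \iota_S(\C^n)$. The Fubini-Study form on $\P^{r-1}$ restricts to $Y_\Omega$ as a Kähler form invariant under the residual $(S^1)^n$-action, and with appropriate normalization its moment map image is exactly $\Omega$. On the open torus $(\C^\times)^n \subseteq \on{int}(X_\Omega)$, the map $\iota_S$ is $T^n$-equivariant onto the open dense orbit of $Y_\Omega$, and I would compute $\iota_S^*\omega_{FS}$ directly in logarithmic coordinates to verify that it equals the standard toric symplectic form whose moment polytope is $\Omega$. Having matched the symplectic forms on the open torus orbit, I would extend to the full domain $\on{int}(X_\Omega)$ by continuity and by matching lower-dimensional strata: boundary strata of $\on{int}(X_\Omega)$ where some $z_i$ vanishes correspond under $\iota_S$ to the torus orbits of $Y_\Omega \setminus A_\Omega$ indexed by the faces of $\Omega$ meeting the origin.

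The main obstacle is the second step, specifically pinning down the normalization so that the pullback $\iota_S^*\omega_{FS}$ is precisely the standard symplectic form on $\on{int}(X_\Omega)$ rather than a positive scalar multiple, and verifying that the moment maps agree on the nose. This is essentially a local calculation in logarithmic coordinates on the open torus, using that the differentials $d(z^{s_i})/z^{s_i}$ span according to the lattice points $s_i$, but care must be taken with the factor of $\pi$ in the definition of $\mu$ from Def.~\ref{def:moment_domain}. The behavior along the strata where coordinates vanish is governed automatically by continuity and $T^n$-equivariance once the open orbit case is settled, and the set-theoretic surjectivity in the first step is routine once one recognizes $Y_\Omega \cap U_1$ as the affine toric variety cut out by the binomial relations among the $z^{s_i}$.
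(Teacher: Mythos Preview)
The paper does not prove this lemma; it is quoted from \cite[\S1.2]{cw_ech_20} without argument, so there is no proof in the paper to compare against. I will therefore just assess your outline on its own terms.

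Your first step is essentially correct. The key point you use implicitly but should make explicit is that because $\Omega$ is a lattice moment domain, the unit vectors $e_1,\dots,e_n$ lie in $S$; hence in the affine chart $\{y_1\neq 0\}$ the map $\iota_S$ contains the identity coordinates $z_1,\dots,z_n$ among its components, which is what forces $\iota_S$ to be a closed embedding of $\C^n$ onto $Y_\Omega\cap U_1$. The appeal to $\on{Spec}\,\C[S^{\on{sat}}]$ is a red herring and can be dropped.

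There is a genuine gap in your second step. You write ``$(\C^\times)^n\subseteq\on{int}(X_\Omega)$'', but this is false: $\on{int}(X_\Omega)$ is a bounded region in $\C^n$ while $(\C^\times)^n$ is not. More substantively, you appear to be treating $\iota_S$ itself as the desired symplectomorphism and trying to check that $\iota_S^*\omega_{\on{FS}}$ agrees with $\omega_{\on{std}}$ on some common open set. It does not: $\iota_S^*\omega_{\on{FS}}$ is a finite-volume toric K\"ahler form on all of $\C^n$, whereas $\omega_{\on{std}}$ has infinite volume. The two forms are never equal on any open set, so there is nothing to ``match on the open torus orbit and extend by continuity''.

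What actually produces the symplectomorphism is a separate map built from moment data. You have correctly observed that $(\iota_S(\C^n),\omega_{\on{FS}})$ is a Hamiltonian $T^n$-space whose moment image is $\Omega$ minus its outer boundary, and the same is true of $(\on{int}(X_\Omega),\omega_{\on{std}})$ with the standard moment map $\mu$. The symplectomorphism comes from identifying each with $T^n\times\on{int}(\Omega)$ via action--angle coordinates on the free locus and then invoking the equivariant Darboux/Delzant local normal form along the lower-dimensional orbits. Equivalently, one appeals to the uniqueness (up to equivariant symplectomorphism) of symplectic toric manifolds with a given moment image. Your concern about the factor of $\pi$ is legitimate and enters precisely here, in normalising the Fubini--Study moment map so that its image is $\Omega$ on the nose; once that is fixed, the rest is the standard uniqueness argument rather than a pointwise comparison of forms under $\iota_S$.
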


The ECH capacities of toric domains are extremely well understood and are governed by the so-called ``weight sequence'' of the moment domain $\Omega$. We start with the concave case from \cite{ccfhr_sym_14}.

\begin{definition} \label{def:ccv_weight_seq} The \emph{weight sequence} $\on{wt}(\Omega)$ of a concave moment domain $\Omega\subseteq\R_{\geq0}^2$ is the (potentially infinite) unordered list (with repetitions) defined inductively as follows. 

\vspace{3pt}

Let $\Omega_1 = \Delta(a)$ be the largest equilateral right triangle contained in $\Omega$. The complement $\Omega \setminus \Omega_1$ consists of two (possibly empty) components: a component $\Omega'_2$ touching the $x$-axis and a component $\Omega'_3$ touching the $y$-axis. It is simple to check that $\Omega_2 = T_2\Omega_2' + v_2$ and $\Omega_3 = T_3\Omega_3' + v_3$ are concave moment domains, where
\[
T_2 = \left[\begin{array}{cc}
1 & 0\\
1 & 1\\
\end{array}\right] \qquad 
T_3 = \left[\begin{array}{cc}
1 & 1\\
0 & 1\\
\end{array}\right] \qquad 
v_2 = -\left[\begin{array}{cc}
0\\
a\\
\end{array}\right] \qquad
v_3 = -\left[\begin{array}{cc}
a\\
0\\
\end{array}\right]
\]
The weight expansion $\on{wt}(\Omega)$ is now defined by the following recursive formula.
\[
\on{wt}(\Omega) = \{a\} \cup \on{wt}(\Omega_1) \cup \on{wt}(\Omega_2)
\]
The first stage of the process for the concave domain from Fig.~\ref{fig:domains}(a) is shown in Fig.~\ref{fig:wt_seq}(a).\end{definition}

\begin{definition} \label{def:cvx_weight_seq} The \emph{weight sequence} $\on{wt}(\Omega)$ of a convex moment domain $\Omega\subseteq\R_{\geq0}^2$ is the sequence
\[
\on{wt}(\Omega) = (a;\on{wt}(\Omega_1),\on{wt}(\Omega_2))
\]
Here $\Delta(a)$ is the smallest right triangle containing $\Omega$, and $\Omega_1$ and $\Omega_2$ are the (possibly empty) concave domains constructed from the components of $\Delta(a) \setminus \Omega$ as in the concave case. We show the first step of the process for the convex domain from Fig.~\ref{fig:domains}(b) in Fig.~\ref{fig:wt_seq}(b).
\end{definition}

\begin{remark} As discussed in \cite[\S2.4]{wor_tow_22} it is most natural to view this sequence as indexed by a rooted binary tree. \end{remark}

\begin{remark}
A concave or convex moment domain $\Omega$ is rational-sloped (for instance, rational) if and only if its weight sequence is finite.
\end{remark}

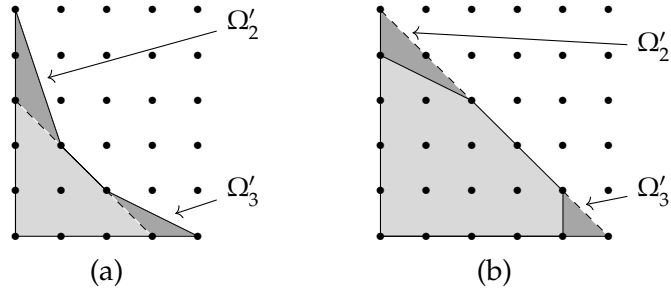
\begin{figure}[h]
\caption{Weight decompositions}
\label{fig:wt_seq}
\begin{center}
\begin{tikzpicture}[scale=0.6]
\draw[fill=gray!30] (0,0) to (0,5) to (1,2) to (2,1) to (4,0);
\draw[white,fill=gray!70] (0,3) to (1,2) to (0,5) to (0,3);
\draw[white,fill=gray!70] (3,0) to (2,1) to (4,0) to (3,0);
\draw (0,0) to (0,5) to (1,2) to (2,1) to (4,0);

\draw[white,fill=gray!30] (8,0) to (8,4) to (10,3) to (12,1) to (12,0);
\draw[white,fill=gray!70] (8,4) to (8,5) to (10,3);
\draw[white,fill=gray!70] (12,0) to (13,0) to (12,1);
\draw (8,0) to (8,5);
\draw (8,0) to (13,0);
\draw (8,4) to (10,3) to (12,1) to (12,0);

\foreach \i in {0,...,4}
{
\foreach \j in {0,...,5}
{\node (\i\j) at (\i,\j){\tiny $\bullet$};
\node (\i\j) at (8+\i,\j){\tiny $\bullet$};
}
\node (\i) at (13,\i){\tiny $\bullet$};
}

\node (5) at (13,5){\tiny $\bullet$};

\draw (0,0) to (4,0);
\draw[dashed] (0,3) to (3,0);

\node (l2) at (5,1){\small $\Omega_3'$};
\node (l1) at (5,4.7){\small $\Omega_2'$};

\draw[->] (l2) to (3.5,0.5);
\draw[->] (l1) to (0.7,3.5);

\draw (8,0) to (12,0);
\draw[dashed] (8,5) to (10,3);
\draw[dashed] (12,1) to (13,0);

\node (l2) at (14,1){\small $\Omega_3'$};
\node (l1) at (14,4.2){\small $\Omega_2'$};

\draw[->] (l2) to (12.5,0.7);
\draw[->] (l1) to (8.7,4.5);

\node (la) at (2,-0.8){(a)};
\node (lb) at (10.5,-0.8){(b)};
\end{tikzpicture}
\end{center}
\end{figure}

ECH capacities can be used to completely characterize the existence of symplectic embeddings of a concave toric domain to a convex toric domain \cite{cri_sym_19} due to a result of Cristofaro-Gardiner.

\begin{prop}[\!{\cite[Thm. 1.2]{cri_sym_19}}] \label{prop:cg_embedding} Let $\Omega$ and $\Delta$ be concave and convex moment domains respectively in $\R_{\geq0}^2$. The following are equivalent:
\begin{enumerate}
    \item The interior of $X_\Omega$ symplectically embeds into the interior of $X_\Delta$.
    \vspace{3pt}
    \item For each $c$ with $0 < c < 1$, $X_{c\Omega}$ symplectically embeds into the interior of $X_\Delta$. 
    \vspace{5pt}
    \item $\cech_k(X_\Omega) \le \cech_k(X_\Delta)$ for each $k \ge 1$.
\end{enumerate}
\end{prop}

\subsection{Kaveh deformations} \label{subsec:Kahler_deformations} Fix a smooth projective variety $Y$ of dimension $n$ equipped with a flag $Y_\bullet$. In \cite{kav_tor_19} Kaveh constructed a K\"ahler manifold equipped with a submersion to $\C$ with generic fiber $Y$ and central fiber symplectomorphic to an open free domain $F_\Omega$.

\vspace{3pt}

The goal of this subsection is to describe a version of a Kaveh's construction where the central fiber is a toric domain $X_\Omega$. The construction is essentially the same as in \cite{kav_tor_19} but we carefully check that certain details (e.g.~the extension of holomorphic functions) carry over to our setting.

\subsubsection{Taylor series valuation} We start by describing a local version of the valuation of a flag and describing some of its properties. Let $f$ be a formal Taylor series in $n$ variables, written as
\begin{equation} \label{eqn:f_Taylor_series}  f(u_1,\dots,u_n) = \sum_\alpha c_\alpha u^\alpha \qquad \text{where}\qquad \alpha = (\alpha_1,\dots,\alpha_n) \in \Z_{\ge 0}^n\end{equation}
Recall that the \emph{support} $\on{supp}(f)$ is the set of integer vectors such that the coefficient $c_\alpha$ is non-zero.
\[\on{supp}(f) := \{\alpha \; : \; c_\alpha \neq 0\} \subseteq \Z^n_{\ge 0}\]
There is a natural \emph{order valuation} $\nu_{\on{TS}}$ on the fraction field of $\C[[u_1,\dots,u_n]]$ determined by the standard lexicographic ordering $\prec$ on $\Z^n$. On a formal Taylor series, it is given by
\[
\nu_{\on{TS}}(f) = \on{min}\big(\alpha \; :\; \alpha \in \on{supp}(f)\big)
\]
The valuation $\nu_{Y_\bullet}$ of a flag $Y_\bullet$ is the pullback of $\nu_{\on{TS}}$ through any coordinate chart $\varphi\colon V \simeq U \subseteq \C^n$ on a neighborhood $V \subseteq Y$ that satisfies
\begin{equation} \label{eqn:flag_chart_standard}
\varphi(V \cap Y_k) = U \cap (0 \times \C^k) \subseteq \C^n
\end{equation}
More precisely, the following diagram commutes, where the horizontal map sends $f$ to the Taylor series of $f \circ \varphi^{-1}\colon U \to \C$. 
\[
\begin{tikzcd}
\big\{f:X \to \C \text{ holomorphic near }Y_0\big\} \arrow[r] \arrow[d,"\nu_{Y_\bullet}"] & \C[[u_1,\dots,u_n]] \arrow[d,"\nu_{\on{TS}}"]\\
\Z^n \arrow[r,"="] & \Z^n
\end{tikzcd}
\]

We will require the following property of the order valuation, which roughly states that the valuation of $f$ is a boundary point of the convex hull of the support.

\begin{lemma} \label{lem:valuation_w} There is a $C > 0$ such if $w = (w_1,\dots,w_n)$ satisfies $w_i \le 0$ and $w_{i+1} \le Cw_i$ for all $i$, then $w \cdot \nu_{\on{TS}}(f) \le w \cdot \alpha$ for all $\alpha \in \on{supp}(f)$. \end{lemma}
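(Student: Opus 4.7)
The plan is to use the lex-minimality of $\beta := \nu_{\on{TS}}(f)$ to identify, for each competitor $\alpha \in \on{supp}(f) \setminus \{\beta\}$, a unique first coordinate at which $\alpha$ and $\beta$ disagree. Concretely, let $k=k(\alpha)$ be the smallest index with $\alpha_k \ne \beta_k$; the lex order then forces $\alpha_i = \beta_i$ for $i < k$ and $\alpha_k > \beta_k$, so that $\alpha_k - \beta_k \ge 1$. This lets me write
\[
w \cdot (\alpha - \beta) \;=\; w_k (\alpha_k - \beta_k) \;+\; \sum_{i > k} w_i (\alpha_i - \beta_i),
\]
decomposing the pairing into a sign-definite \emph{head} term and a \emph{tail} which I will dominate via a geometric-series bound. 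Conceptually this mirrors the standard tropical picture: the set of weights $w$ for which a lattice point $\beta$ is the minimizer of $w \cdot \alpha$ over a finite support is the outward normal cone of the Newton polytope at $\beta$, and the normal cone at the lex-minimum vertex contains every ray of weights whose consecutive entries satisfy a geometric ratio condition of the form in the lemma.

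The core estimate is a one-line geometric-series bound. Iterating the hypothesis relating $w_{i+1}$ and $C w_i$ controls the consecutive ratios $|w_{i+1}|/|w_i|$, giving, in the regime relevant to the lemma, a bound $|w_i| \le C^{i-k} |w_k|$ for the tail indices $i \ge k$. Combined with a uniform bound $M := \sup\{|\alpha_i - \beta_i| : \alpha \in \on{supp}(f),\, i\}$ on the coordinates of $\alpha - \beta$, this yields
\[
\Big| \sum_{i > k} w_i (\alpha_i - \beta_i) \Big| \;\le\; M\, |w_k| \cdot \frac{C}{1 - C},
\]
while $|w_k (\alpha_k - \beta_k)| \ge |w_k|$. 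Choosing $C > 0$ small enough that $MC/(1 - C) < 1$ then forces the head to dominate the tail in magnitude, and the sign of the head term delivers the desired inequality $w \cdot \beta \le w \cdot \alpha$.

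The main obstacle is establishing the finiteness of $M$, since $f$ is only assumed to be a formal Taylor series and so could have infinite support. I would handle this by a truncation argument: replace $f$ by a sufficiently high-order polynomial jet $f_N$ at $Y_0$, chosen so that $\nu_{\on{TS}}(f_N) = \nu_{\on{TS}}(f)$ and $M(f_N)$ is finite. The monomials of $f$ discarded by this truncation have much larger total degree, so their contributions to $w \cdot \alpha$ grow with the degree and cannot undercut the head term at $\beta$ for any $w$ in the specified cone. This reduces the lemma to the finite-support case, where the geometric-series estimate above closes the argument with an explicit $C = C(f_N)$.
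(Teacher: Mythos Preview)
Your overall plan—split $w\cdot(\alpha-\beta)$ into a head term at the first differing index $k$ and a tail, then choose the ratio of consecutive weights so that the head dominates—is exactly the paper's.

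The genuine gap is in how you control the tail. You bound it by $M\sum_{j>k}|w_j|$ with $M=\sup_{\alpha,i}|\alpha_i-\beta_i|$, and since $M$ is infinite for a general formal series you pass to a jet $f_N$. But the truncation step does not close: to dispose of the discarded monomials you assert that ``their contributions to $w\cdot\alpha$ grow with the degree and cannot undercut the head term at $\beta$,'' yet verifying this for an arbitrary high-degree $\alpha\succ\beta$ with the already-chosen $C=C(f_N)$ is precisely the original problem restated for those $\alpha$. Nothing has been reduced; you still need a tail estimate uniform over an infinite family of $\alpha$.

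The paper avoids this entirely by noticing that the tail only needs to be bounded from one side, and that side depends on $\beta$ alone. Since every $\alpha\in\on{supp}(f)$ lies in $\Z_{\ge 0}^n$, one has $\alpha_j\ge 0$, so the worst possible contribution at index $j$ is obtained at $\alpha_j=0$, giving $w_j(\alpha_j-\beta_j)$ bounded in the dangerous direction by $|w_j|\,\beta_j$. The tail is therefore controlled by $\sum_{j>k}|w_j|\,\beta_j$, a quantity involving only the fixed vector $\beta=\nu_{\on{TS}}(f)$, and $C$ can be taken as $\max\{1,r_1,\dots,r_n\}$ with $r_i=\sum_{j\ge i}\beta_j$. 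No finiteness of $\on{supp}(f)$ and no truncation is required. Replacing your two-sided bound $|\alpha_j-\beta_j|\le M$ with the one-sided bound $\alpha_j\ge 0$ is the missing idea.

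A secondary slip: from $w_i\le 0$ and $w_{i+1}\le Cw_i$ one gets $|w_{i+1}|\ge C|w_i|$, not the decay $|w_i|\le C^{i-k}|w_k|$ that you wrote; the paper's proof in fact works with the form $w_i\ge Cw_{i+1}$. This is a sign confusion rather than a structural problem, but it is worth straightening out.
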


\begin{proof} Let $v = \nu_{\on{TS}}(f)$ and $r_i = v_i + v_{i+1} + \dots + v_n$. Choose $w = (w_1,\dots,w_n)$ to satisfy
\begin{equation} \label{eqn:w_ratio}
w_i < 0 \quad\text{and}\quad w_i \ge C \cdot w_{i+1} \qquad\text{where}\qquad C = \max{1,r_1,r_2,\dots,r_n}
\end{equation}
Now let $\alpha \in \on{supp}(f)$. Since $v = \nu_{\on{TS}}(f)$, there is a $j$ such that $\alpha_j = v_j$ for $j < i$ and $\alpha_i \ge v_i + 1$. Thus we have
\[
w \cdot (\alpha - v) = \sum_{j < i} w_j(\alpha_j - v_j) + w_i(\alpha_i - v_i) + \sum_{k > i} w_k(\alpha_k - v_k) \le w_i - \sum_{k > i} w_kv_k
\]
Due to (\ref{eqn:w_ratio}), we have $w_{i+1} \le w_{j}$ for all $j \ge i+1$ and thus
\[w_i - \sum_{k > i} w_kv_k \le w_i - w_{i+1}r_{i+1} \le 0 \qedhere\] \end{proof}

\noindent As an immediate consequence of Lem.~\ref{lem:valuation_w} we have the following corollary.

\begin{cor} \label{cor:valuation_w} If $f_1,\dots,f_r$ are a set of formal Taylor series, then there exists a $w \in \N^n$ such that
\[w \cdot \nu_{\on{TS}}(f_i) \le w \cdot \alpha \qquad\text{for all}\qquad \alpha \in \on{supp}(f_i)\]
\end{cor}

\noindent Analogous results to Lem.~\ref{lem:valuation_w} and Cor.~\ref{cor:valuation_w} also hold for the valuation $\nu_{Y_\bullet}$. 

\subsubsection{Weighted deformations} We next describe a construction of a certain projective variety
\[\mathcal{X}(Y_\bullet,m)\]
determined by $Y_\bullet$ and a choice of integer vector $m \in \N^n$. Fix a chart $\varphi\colon V \simeq U \subseteq \C^n$ on $Y$ that satisfies the compatibility condition (\ref{eqn:flag_chart_standard}) with the flag. To define this space, consider the map $\Phi_m\colon\C^\times \times \C^n \to \C \times \C^n$ using the coordinates $u_1,\dots,u_n$ defined as follows.
\[
\Phi_m(t;u_1,\dots,u_n) = (t;t^{-m_1}u_1,\dots,t^{-m_n}u_n)
\]
We also define $W = \Phi_m(\C^\times \times U) \cup (0 \times \C^n) \subseteq \C \times \C^n$ and let $\Psi$ denote the composition map
\[\C^\times \times V \xrightarrow{\on{Id} \times \varphi} \C^\times \times U \xrightarrow{\Phi_m} W\]

\begin{lemma} The set $W \subseteq \C \times \C^n$ is open and $\Psi$ is a biholomorphism of $\C^\times \times V$ with an open subset of $W$ commuting with projection to $\C$.
\end{lemma}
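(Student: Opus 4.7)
The claim has two parts: that $\Psi$ is a biholomorphism onto an open subset of $W$ commuting with projection to $\C$, and that $W$ itself is open in $\C \times \C^n$. The first part is essentially formal, while the openness of $W$ is the only substantive point.

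For the biholomorphism claim, I would first observe that $\Phi_m$ restricts to an explicit biholomorphism of $\C^\times \times \C^n$ onto itself with inverse
\[
(t;v_1,\dots,v_n) \mapsto (t; t^{m_1} v_1,\dots, t^{m_n} v_n),
\]
both maps being algebraic in $(t,t^{-1},u_1,\dots,u_n)$. Since $\mathrm{Id} \times \varphi\colon \C^\times \times V \to \C^\times \times U$ is a biholomorphism by construction, the composition $\Psi = \Phi_m \circ (\mathrm{Id} \times \varphi)$ is a biholomorphism of $\C^\times \times V$ onto $\Phi_m(\C^\times \times U) \subseteq W$. Commutation with projection to $\C$ is visible from the formula since neither $\mathrm{Id} \times \varphi$ nor $\Phi_m$ alter the first coordinate.

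For openness of $W$, the open set $\Phi_m(\C^\times \times U)$ is already open in $\C^\times \times \C^n$ by the previous paragraph, so it is open in $\C \times \C^n$. Hence only points of the central fiber $\{0\} \times \C^n$ require attention: given $v = (v_1,\dots,v_n) \in \C^n$, I would produce $\epsilon, \delta > 0$ such that the polydisk $D_\epsilon(0) \times \prod_i D_\delta(v_i)$ lies in $W$. The flag chart condition (\ref{eqn:flag_chart_standard}) at level $k=0$ gives $\varphi(Y_0) = \{0\}$, so $0 \in U$, and hence $U$ contains some polydisk $\prod_i D_{\eta}(0)$ around the origin. For $(t,v')$ in the proposed polydisk with $t \neq 0$, membership in $\Phi_m(\C^\times \times U)$ amounts to $(t^{m_1} v'_1,\dots, t^{m_n} v'_n) \in U$, and because each $m_i \geq 1$, the quantities $|t^{m_i} v'_i|$ tend to $0$ uniformly for $v'$ in a bounded neighborhood of $v$. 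Taking $\epsilon$ small enough that $|t^{m_i} v'_i| < \eta$ for all $i$ yields the desired containment.

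The only real obstacle is therefore this openness at the central fiber, and it rests entirely on the strict positivity of the weights $m_i$, which ensures that the "new" coordinates $t^{-m_i} u_i$ of points in $U$ collapse to the origin of $\C^n$ along the generic fibers as $t \to 0$, so that the central fiber $\{0\} \times \C^n$ is genuinely glued into the family as a limit of the biholomorphic copies $\Phi_m(\{t\} \times U)$ rather than sitting as a boundary stratum.
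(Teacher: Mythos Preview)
Your proof is correct and follows essentially the same approach as the paper: both reduce to showing that every point $(0,v)$ of the central fiber is interior to $W$, and both verify this by observing that $(t,v') \in \Phi_m(\C^\times \times U)$ precisely when $(t^{m_1}v'_1,\dots,t^{m_n}v'_n) \in U$, which holds for small $|t|$ since $U$ is a neighborhood of the origin and the $m_i$ are positive. Your treatment of the biholomorphism part is slightly more explicit than the paper's (which simply notes it is the only trivial part), but the substantive argument is identical.
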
 

\begin{proof} The only non-trivial part of the claim is that $W$ is open. Since $\Psi$ is a local biholomorphism, it suffices to check that every $(0;z) = (0;z_1,\dots,z_n) \in 0 \times \C^n$ is an interior point of $W$. Choose any ball $B(z) \subseteq \C^n$ around $z$ and note that $w = (t^{m_1}w_1,\dots,t^{m_n}w_n)$ is in $U$ for $w \in B(z)$ and $|t|$ sufficiently small, since $U$ is a neighborhood of $0$. It follows that for sufficiently small $\delta$
\[
[-\delta,\delta]^2 \times B(z) \subseteq W \qedhere
\]
\end{proof}

\begin{definition} The \emph{weighted deformation to the normal cone} $\mathcal{X} = \mathcal{X}(Y_\bullet,m)$ associated to $(Y_\bullet,m)$ is given by the gluing $(\C^\times \times Y) \cup_\Psi W$, equipped with the holomorphic submersion
\[
\begin{tikzcd}
\C^\times \times Y \arrow[r] \arrow[d] & \mathcal{X}(Y_\bullet,m) \arrow[d,"\pi"]\\
\C^\times \arrow[r] & \C
\end{tikzcd} \qquad\text{with}\qquad \pi^{-1}(0) = T_{Y_0}Y \simeq \C^n
\]\end{definition}

\subsubsection{Holomorphic functions on deformations}  Let $f\colon Y \to \C$ be a meromorphic function that is holomorphic near $Y_0$ and let $\on{supp}(f \circ \varphi^{-1})$ be the support of the Taylor series of $f \circ \varphi^{-1}\colon U \to \C$.

\begin{definition} \label{def:compatible_pair} A pair $(m,v)$ of an integer vector $m \in \N^n$ and a vector $v \in \on{supp}(f \circ \varphi^{-1})$ is \emph{compatible} if $v$ is the unique minimum of the function
\[
\on{supp}(f \circ \varphi^{-1}) \to \Z \qquad \alpha \mapsto \alpha \cdot m
\]
\end{definition}

\begin{lemma} \label{lem:Kaveh_function_extension} If $(m,v)$ is a compatible pair, then the function $t^{-m \cdot v} f$ on $\C^\times \times Y \subseteq \mathcal{X}(Y_\bullet,m)$ extends holomorphically over the complement $0 \times \C^n$ of $\C^\times \times Y$ to a function $F$ satisfying
\begin{equation} \label{eq:Kaveh_function_extension} F(0,w) = c_v w^v \qquad \text{for}\qquad (u,w) \in 0 \times \C^n\end{equation}
\end{lemma}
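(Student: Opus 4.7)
The plan is to pull the function $t^{-m\cdot v} f$ back to the $W$ chart via $\Psi$, substitute in the convergent Taylor expansion of $f \circ \varphi^{-1}$, and verify that all apparent negative powers of $t$ cancel.

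First I would fix a point $(0, w_0) \in 0 \times \C^n$ and a convergent expansion $f \circ \varphi^{-1}(u) = \sum_\alpha c_\alpha u^\alpha$ on some polydisc $P_r \subset U$ around the origin. Since each $m_i \geq 1$, for any $R > \max_i |w_{0,i}|$ one can pick $\delta > 0$ small enough that the map $(t, w) \mapsto (t^{m_1} w_1, \ldots, t^{m_n} w_n)$ sends the polydisc $\Delta_{\delta, R} := \{|t| < \delta\} \times \{|w_i| < R\}$ into $P_r$. Then $\Delta_{\delta, R} \subseteq W$, and on $\Delta_{\delta, R} \cap \{t \neq 0\}$ the function $t^{-m \cdot v} f$ agrees with $t^{-m \cdot v} G$, where
$$G(t, w) := f\bigl(\varphi^{-1}(t^{m_1} w_1, \ldots, t^{m_n} w_n)\bigr) = \sum_\alpha c_\alpha t^{m \cdot \alpha} w^\alpha$$
is holomorphic on $\Delta_{\delta, R}$ and given by this absolutely convergent series.

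The key step is to regroup $G$ by powers of $t$. Because $m \in \N^n$ has all entries $\geq 1$, the fiber $\{\alpha \in \Z_{\geq 0}^n : m \cdot \alpha = k\}$ is finite for each $k$, so the rearrangement
$$G(t, w) = \sum_{k \geq 0} t^k g_k(w), \qquad g_k(w) := \sum_{m \cdot \alpha = k} c_\alpha w^\alpha$$
is valid with each $g_k$ a polynomial. The compatibility of $(m, v)$ in Def.~\ref{def:compatible_pair} is exactly the statement that no $\alpha \in \on{supp}(f \circ \varphi^{-1})$ satisfies $m \cdot \alpha < m \cdot v$, forcing $g_k \equiv 0$ for $k < m \cdot v$, and that $v$ is the unique element of the support with $m \cdot \alpha = m \cdot v$, so $g_{m \cdot v}(w) = c_v w^v$. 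Consequently $G$ is divisible by $t^{m \cdot v}$ in $\mathcal{O}(\Delta_{\delta, R})$, and
$$F(t, w) := t^{-m \cdot v} G(t, w) = c_v w^v + \sum_{k > m \cdot v} t^{k - m \cdot v} g_k(w)$$
is holomorphic on $\Delta_{\delta, R}$ with $F(0, w) = c_v w^v$, producing formula (\ref{eq:Kaveh_function_extension}) on a neighborhood of $(0, w_0)$.

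The main subtlety is that $0 \times \C^n$ is unbounded, so a single polydisc will not work; the extension must be constructed locally around every $w_0$ with $\delta = \delta(w_0)$. This is the only place where care is needed, but it is harmless: for different choices of $(\delta, R)$ the two candidate extensions must agree on the overlap since they both restrict to $t^{-m \cdot v} f$ on $\C^\times \times Y$, and a holomorphic function vanishing on the complement of a hypersurface in a connected open set is zero. These local extensions therefore glue into the desired holomorphic $F$ on a neighborhood of the central fiber in $W$, completing the proof.
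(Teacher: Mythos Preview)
Your proof is correct and follows essentially the same approach as the paper: pass to the $(t,w)$ coordinates on $W$, substitute the convergent Taylor expansion $f(t,w)=\sum_\alpha c_\alpha t^{m\cdot\alpha}w^\alpha$, use compatibility to see that all exponents of $t$ are at least $m\cdot v$ with equality only at $\alpha=v$, and read off $F(0,w)=c_v w^v$. Your explicit regrouping by powers of $t$ and the local-to-global gluing over varying $w_0$ are minor elaborations of steps the paper treats more tersely via absolute convergence, but the argument is the same.
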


\begin{proof} Consider the coordinates $t$ and $w_i = t^{-m_i} u_i$ on the neighborhood $W \subset \mathcal{X}(Y_\bullet,m)$ of $0 \times \C^n$. In terms of the notation in (\ref{eqn:f_Taylor_series}), the Taylor series for $f$ in the coordinates $(t,w)$ on $W$ is given by
\[
f(t;w) = \sum_\alpha c_\alpha t^{m \cdot \alpha}w^\alpha = \sum_\alpha c_\alpha \Big(\prod_i (t^{m_i}w_i)^{\alpha_i}\Big)
\]
Since the Taylor series (\ref{eqn:f_Taylor_series}) is absolutely convergent for small $|u|$, the Taylor series of $t$ and $w$ are absolutely convergent for any $w \in \C^n$, as long as $|t|$ is sufficiently small. On the other hand, the Taylor series is divisible by $t^{m \cdot v}$ since $m \cdot v \le m \cdot \alpha$ whenever $c_\alpha \neq 0$. Therefore
\begin{equation} \label{eqn:taylor_expansion_tf}
t^{-m \cdot v} f(t;w) = \sum_\alpha c_\alpha t^{m \cdot (\alpha - v)}w^\alpha\end{equation}
is absolutely convergent in a neighborhood of $(0,w) \in 0 \times \C^n$, for any $w$. 

\vspace{3pt}

Thus $t^{-m \cdot v} f$ extends holomorphically over $0 \times \C^n$ and is given by the Taylor series (\ref{eqn:taylor_expansion_tf}) near $0 \times \C^n$. In particular, when $t = 0$, the expansion only contains terms $c_\alpha w^\alpha$ with $m \cdot \alpha = m \cdot v$. Since $v$ is the unique minimizer of the map $\alpha \mapsto m \cdot \alpha$, we acquire (\ref{eq:Kaveh_function_extension}). 
\end{proof}

\begin{definition} \label{def:meromorphic_ext_to_family} The meromorphic map $F(m,v)\colon\mathcal{X}(Y_\bullet,m) \to \C$ associated to a meromorphic map $f\colon Y \to \C$ and a compatible pair $(m,v)$ is defined uniquely by
\[
F(m,v)(t;z) = t^{-m \cdot v} \cdot f(z) \qquad\text{on}\qquad \C^\times \times Y \subseteq \mathcal{X}(Y_\bullet,m)
\]
\end{definition}

\subsubsection{Projective embeddings of deformations} Let $A$ be a very ample divisor on $Y$ and consider the corresponding Kodaira embedding
\[
\phi\colon Y \to \P E \qquad \text{where} \qquad E = (H^0(A))^\vee
\]
Choose a basis of $E$ and consider the dual basis of sections of the associated line bundle $L$.
\[\sigma_1,\dots,\sigma_r \in H^0(A) \simeq \Gamma(L)\]
We may assume that $\tau = \sigma_r$ is non-vanishing at the basepoint $Y_0$ of the flag $Y_\bullet$. For each section $\sigma_i$, we have a meromorphic function given by
\[
\sigma_i = f_i \cdot \tau
\]
After rescaling $\sigma_i$, we may assume that the Taylor coefficient of $f_i$ corresponding to index $\nu_{Y_\bullet}(f_i) = \nu_{Y_\bullet}(\sigma_i)$ is $1$. We let $\nu(\sigma)$ denote the set
\[\nu(\sigma) := \big\{\nu_{Y_\bullet}(\sigma_i) \; : \; i = 1,\dots,r\big\} \qquad\text{and}\qquad \Delta(\sigma) := \on{conv}\big(\nu(\sigma)\big) \subseteq \R_{\geq0}^n\]
Finally, use Cor.~\ref{cor:valuation_w} to choose $m \in \N^n$ such that $(m,\nu_{Y_\bullet}(\sigma_i))$ is a compatible pair for each $i$.

\begin{definition} The extension $\Phi(\sigma)\colon\mathcal{X}(Y_\bullet,m) \to \C \times \P^{r-1}$ of the Kodaira map $\phi$ to $\mathcal{X}(Y_\bullet,m)$ is defined as the map
\[
(t;z) \mapsto (t;[F_1(t;z),\dots,F_r(t;z)]
\]
where $F_i = F(m,\nu_{Y_\bullet}(\sigma_i))$ is the meromorphic function determined by $f_i$ in Definition \ref{def:meromorphic_ext_to_family}. Note that
\begin{equation} \label{eqn:toric_map_Phi}
\Phi(\sigma)(0;w_1,\dots,w_n) = [w^{v_1},w^{v_2},\dots,w^{v_{r-1}},1] \qquad\text{where}\qquad v_i = \nu_{Y_\bullet}(\sigma_i)
\end{equation}
\end{definition}

\begin{lemma}\label{lem:domain_implies_symplectic} If $\Delta(\sigma) \subseteq \R_{\geq0}^n$ is a moment domain (see Def.~\ref{def:moment_domain}) then $\Phi(\sigma)$ is an embedding. 
\end{lemma}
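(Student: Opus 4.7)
The goal is to verify that $\Phi(\sigma)$ is an injective holomorphic immersion, hence an embedding of $\mathcal{X}(Y_\bullet, m)$ onto a locally closed analytic subvariety of $\C \times \P^{r-1}$. Since $\Phi(\sigma)$ commutes with the projection $\pi \colon \mathcal{X}(Y_\bullet,m) \to \C$, both injectivity and the immersion property reduce to fiberwise checks: points on distinct fibers automatically map to distinct targets because their $t$-coordinates differ, and the horizontal component of the differential is injective because the $\C$-component of $\Phi(\sigma)$ is just the projection $\pi$ itself.

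On a fiber $\pi^{-1}(t) \simeq Y$ with $t \neq 0$, the restriction $\Phi(\sigma)|_{\pi^{-1}(t)}$ coincides with the Kodaira embedding $\phi \colon Y \hookrightarrow \P^{r-1}$ postcomposed with the diagonal projective automorphism
\[
[y_1 : \dots : y_r] \mapsto [t^{-m \cdot v_1} y_1 : \dots : t^{-m \cdot v_{r-1}} y_{r-1} : y_r].
\]
Since $A$ is very ample, $\phi$ is an embedding, and therefore so is $\Phi(\sigma)|_{\pi^{-1}(t)}$.

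The principal content of the lemma lies on the central fiber $\pi^{-1}(0) \simeq \C^n$, where by (\ref{eqn:toric_map_Phi}) the restriction is the toric map $\Phi_0(w) = [w^{v_1} : \dots : w^{v_{r-1}} : 1]$ determined by the characters $\nu(\sigma)$ (recalling $v_r = 0$). Here I would invoke Lem.~\ref{lem:toric_spaces_equal}: setting $S = \Delta(\sigma) \cap \Z^n$, the moment domain hypothesis on $\Delta(\sigma)$ guarantees that $\iota_S \colon \C^n \to \P^{|S|-1}$ is a biholomorphism onto $Y_{\Delta(\sigma)} \setminus A_{\Delta(\sigma)} \simeq \on{int}(X_{\Delta(\sigma)})$. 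Our $\Phi_0$ factors as $\iota_S$ composed with the linear coordinate projection $\P^{|S|-1} \dashrightarrow \P^{r-1}$ that discards the characters in $S \setminus \nu(\sigma)$; one then checks that this projection restricts to an isomorphism on the image of $\iota_S$, so $\Phi_0$ is an embedding as well.

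The principal obstacle is the central fiber step, specifically verifying that the reduced set of characters $\nu(\sigma)$ suffices to separate points and tangent vectors on the image toric variety. This should follow from combining the moment domain geometry of $\Delta(\sigma)$ with the fact that $\nu(\sigma)$ is the valuation image of a basis of sections realizing the Kodaira embedding of $(Y, A)$: together these force $\nu(\sigma)$ to contain enough distinct characters to distinguish points on both the open torus orbit and the boundary strata of the central fiber. Once this is established, combining the fiberwise embedding results with the base-direction check gives that $\Phi(\sigma)$ is a global injective holomorphic immersion, hence a biholomorphism onto its image in $\C \times \P^{r-1}$.
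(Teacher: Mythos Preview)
Your fiberwise reduction and treatment of the non-central fibers are correct and match the paper's implicit argument. The gap is in the central fiber step. You factor $\Phi_0$ as $\iota_S$ followed by the coordinate projection that forgets the characters in $S \setminus \nu(\sigma)$, and then assert that ``one then checks that this projection restricts to an isomorphism on the image of $\iota_S$''. But this is exactly the content of the lemma, and your subsequent justification (``together these force $\nu(\sigma)$ to contain enough distinct characters\dots'') is not an argument. Concretely, in the affine chart where the $0$-coordinate equals $1$, the projection is injective on $\iota_S(\C^n)$ if and only if the monomial map $w \mapsto (w^{v_i})_{v_i \in \nu(\sigma)}$ is already injective on $\C^n$; this fails unless $\nu(\sigma)$ contains characters that together separate all coordinates of $w$, and nothing in your outline establishes this. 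The detour through Lem.~\ref{lem:toric_spaces_equal} does not help here, since that lemma uses the \emph{full} lattice set $S$, not the possibly smaller set $\nu(\sigma)$.

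The paper closes this gap directly, without the factoring through $\iota_S$. The moment domain hypothesis says a neighborhood of the origin lies in $\Delta(\sigma)$, so some positive multiples of the standard basis vectors $e_1,\dots,e_n$ lie in $\nu(\sigma)$; then the K\"uronya--Lozovanu local positivity result \cite[Cor.~B(ii)]{kl_loc_18}, together with very ampleness of $A$, forces the $e_i$ themselves to belong to $\nu(\sigma)$. After reordering so that $v_i = e_i$ for $i \le n$, the central fiber map in the affine chart is the graph $w \mapsto (w, F(w))$ for some holomorphic $F\colon \C^n \to \C^{r-n-1}$, which is manifestly an injective immersion. This is both shorter and avoids the unjustified projection step; the missing ingredient in your argument is precisely the invocation of \cite{kl_loc_18} (or an equivalent statement) to pin down that $e_1,\dots,e_n \in \nu(\sigma)$.
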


\begin{proof} It suffices to check that the restriction of $\Phi(\sigma)$ to $0 \times \C^n$ is an an embedding and that the differential is full rank there. If $\Delta(\sigma)$ is a moment domain then some multiples of the unit basis vectors $e_1,\dots,e_n$ are in $\nu(\sigma)$. From \cite[Cor.B(ii)]{kl_loc_18} and the very ampleness of $A$ we see that in fact $e_1,\dots,e_n$ must be in $\nu(\sigma)$. After reordering, we may assume that $v_i = e_i$ for $i = 1,\dots,n$. Under this assumption, we can write $\Phi(\sigma)|_{0 \times \C^n}$ as
\[\C^n \xrightarrow{\on{Id} \oplus F} \C^n \oplus \C^{r-n-1} \quad\text{in the chart}\quad (z_1,\dots,z_{r-1}) \mapsto [z_1,\dots,z_{r-1},1]\]
Here $F\colon\C^n \to \C^{r-n-1}$ is holomorphic. Thus $\Phi(\sigma)$ is a injective and has full rank differential.
\end{proof}

\subsubsection{Kahler forms on deformations} Let $\Omega(\sigma)$ be the pullback of the Fubini--Study form on $\P^{r-1}$. 
\[\Omega(\sigma) := \Phi(\sigma)^*\Omega_{\on{FS}}\]
Moreover, let $(Y_s,\omega_s)$ be the fiber $\pi^{-1}(s)$ equipped with the $2$-form $\Omega(\sigma)|_{Y_s}$.

\begin{lemma} If $\Delta(\sigma)$ is a moment domain, then $\Omega(\sigma)$ is a symplectic form on the fibers $Y_s$. Furthermore
\begin{itemize}
    \item[(a)] $(Y_0,\omega_0)$ is symplectomorphic to the interior of the toric domain $X_{\Delta(\sigma)}$
    \item[(b)] $(Y_s,\omega_s)$ is symplectomorphic to $(Y,\omega_A)$ for $s \neq 0$.
\end{itemize}\end{lemma}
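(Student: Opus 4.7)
The plan is to analyze $\Omega(\sigma)$ fiber-by-fiber using the embedding $\Phi(\sigma)$ provided by Lemma \ref{lem:domain_implies_symplectic}. Since $\Phi(\sigma)$ is a holomorphic embedding into $\C \times \P^{r-1}$ that commutes with projection to $\C$, its restriction to each fiber $Y_s$ is a holomorphic embedding into $\{s\} \times \P^{r-1}$. The pullback of the Fubini--Study K\"ahler form under a holomorphic embedding is K\"ahler, and in particular symplectic, which immediately establishes the first assertion.

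For part (b), I would observe that for $s \neq 0$ the restriction of $\Phi(\sigma)$ to $\{s\} \times Y$ factors as $A_s \circ \phi$, where $\phi\colon Y \to \P^{r-1}$ is the original Kodaira embedding and $A_s \in \on{PGL}(r)$ is the diagonal projective transformation with entries $(s^{-m \cdot v_1}, \dots, s^{-m \cdot v_{r-1}}, 1)$. This factorization uses that $\tau = \sigma_r$ is non-vanishing at $Y_0$, so $\nu_{Y_\bullet}(\sigma_r) = 0$ and thus $F_r \equiv 1$. Since $\on{PGL}(r)$ is path-connected, $A_s^*\Omega_{\on{FS}}$ is cohomologous to $\Omega_{\on{FS}}$ on $\P^{r-1}$, and therefore $\omega_s$ is cohomologous to $\omega_A = \phi^*\Omega_{\on{FS}}$ on the compact manifold $Y$. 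Choosing any smooth path in $\C^\times$ from $1$ to $s$ produces a smooth family of cohomologous K\"ahler forms on $Y$, so Moser's theorem yields a symplectomorphism $(Y, \omega_s) \simeq (Y, \omega_A)$.

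For part (a), the restriction of $\Phi(\sigma)$ to the central fiber $Y_0 = \C^n$ is precisely the toric monomial map displayed in (\ref{eqn:toric_map_Phi}), namely $w \mapsto [w^{v_1}, \dots, w^{v_{r-1}}, 1]$. This map is equivariant with respect to the standard $T^n$-action on $\C^n$ and the diagonal action on $\P^{r-1}$ generated by the weights $v_1, \dots, v_{r-1}, 0$, so $\omega_0$ is $T^n$-invariant and $(\C^n, \omega_0)$ is a toric K\"ahler manifold. Pulling back the standard moment map on $\P^{r-1}$ exhibits the moment map for $T^n$ on the central fiber as a convex combination of the weight vectors $\nu(\sigma)$, with image equal to the interior of $\Delta(\sigma) = \on{conv}(\nu(\sigma))$. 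By the Delzant-type uniqueness of toric K\"ahler structures with prescribed moment polytope, the central fiber is equivariantly symplectomorphic to the standard toric structure on $\on{int}(X_{\Delta(\sigma)})$.

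The main obstacle is the identification in part (a), particularly when $\nu(\sigma)$ is strictly smaller than $\Delta(\sigma) \cap \Z^n$. In that case Lemma \ref{lem:toric_spaces_equal} does not apply verbatim, since its statement uses the embedding defined by the full lattice point set. One therefore needs either a Delzant-type uniqueness statement for toric K\"ahler manifolds with prescribed moment image, or a direct computation verifying that the moment polytope is $\Delta(\sigma)$ and that the pulled-back form coincides equivariantly with the standard toric form on $X_{\Delta(\sigma)}$. By contrast, the remaining pieces -- cohomology constancy plus Moser in part (b) and $T^n$-equivariance of the pullback in part (a) -- are essentially formal once this toric identification is secured.
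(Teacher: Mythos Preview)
Your proof is correct and for the first assertion and part (b) follows the paper's approach essentially verbatim: the paper also observes that $\omega_s$ is the pullback of $\Omega_{\on{FS}}$ via the Kodaira embedding determined by the rescaled basis $s^{-m\cdot v_i}\sigma_i$, then invokes path-connectedness of bases and Moser.

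The one genuine difference is in part (a). The paper simply cites (\ref{eqn:toric_map_Phi}) together with Lem.~\ref{lem:toric_spaces_equal} and declares the identification immediate, whereas you go through a moment-map and Delzant-type uniqueness argument. Your caution here is well placed: Lem.~\ref{lem:toric_spaces_equal} is stated for the monomial embedding using the \emph{full} lattice point set $S=\Omega\cap\Z^n$, while the central-fiber map (\ref{eqn:toric_map_Phi}) uses only the valuation set $\nu(\sigma)$, which need not coincide with $\Delta(\sigma)\cap\Z^n$. The paper is therefore applying the lemma somewhat loosely. Your route via $T^n$-equivariance and the moment image being $\on{conv}(\nu(\sigma))=\Delta(\sigma)$ closes this gap cleanly; alternatively one can note that adding or removing interior lattice monomials changes the embedding by composition with a linear projection/inclusion on projective space that preserves the K\"ahler class restricted to the image, so the symplectomorphism type is unaffected. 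Either way, the obstacle you flag is real but mild, and your resolution is sound.
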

 
\begin{proof} The first claim is immediate from Lem.~\ref{lem:domain_implies_symplectic}. Moreover, (\ref{eqn:toric_map_Phi}) and Lem.~\ref{lem:toric_spaces_equal} immediately imply (a). Finally, (b) follows from the fact that
\[
\omega_s = \psi^*\Omega_{\on{FS}}
\]
where $\psi\colon Y \to \P^{r-1}$ is the Kodaira embedding determined by the sections $\tau_i = s^{-v_i \cdot m} \sigma_i$ where $v_i = \nu_{Y_\bullet}(\sigma_i)$. On the otherhand, the symplectic form $\omega_A$ is defined (up to symplectomorphism) as the pullback of $\Omega_{\on{FS}}$ by the Kodaira embedding determined by \emph{any} basis. 

\vspace{3pt}

Note that $\omega_A$ is well-defined (up to symplectomorphism) because the space of bases of sections of $A$ is path connected, and so any two such pullbacks are connected by a cohomologous path of symplectic forms. In particularly, they are symplectomorphic by the Moser trick.   \end{proof}

\begin{lemma} \label{lem:embedding_from_family} If $\Delta(\sigma)$ is a moment domain, then for any compact subset $X \subseteq Y_0 \simeq \on{int}(X_{\Delta(\sigma)})$, there is a symplectic embedding $X \to (Y,\omega_A)$.
\end{lemma}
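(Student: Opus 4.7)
The strategy is exactly the gradient-Hamiltonian flow argument of Ruan and Kaveh, adapted to our toric-domain central fiber. I will construct a symplectic flow along the family $\mathcal{X}(Y_\bullet,m)$ that transports the compact set $X$ from the central fiber $Y_0$ onto a nearby smooth fiber $Y_s$, then identify $Y_s$ with $(Y,\omega_A)$ using the previous lemma.

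First, I would compactify the picture. The map $\Phi(\sigma)$ realizes $\mathcal{X}(Y_\bullet,m)$ inside the projective family $\overline{\mathcal{X}} \subseteq \C \times \P^{r-1}$, equipped with $\Omega(\sigma) = \Phi(\sigma)^*\Omega_{\on{FS}}$. Because $\Delta(\sigma)$ is a moment domain, Lem.~\ref{lem:domain_implies_symplectic} shows that $\Phi(\sigma)|_{0 \times \C^n}$ is a smooth embedding, and Lem.~\ref{lem:toric_spaces_equal} identifies $\on{int}(X_{\Delta(\sigma)})$ with the smooth locus $\iota_S(\C^n) \subseteq Y_{\Delta(\sigma)}$. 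Consequently the compact set $X \subseteq \on{int}(X_{\Delta(\sigma)})$ sits inside the smooth locus of $\overline{\mathcal{X}}$, away from both the toric boundary divisor of $Y_0$ and any singularities of the total space. By compactness we can find an open neighborhood $U \subseteq \overline{\mathcal{X}}$ of $X$ that is entirely smooth and in which $\pi$ is a submersion.

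On $U$, I would define the gradient-Hamiltonian vector field
\[
V \;=\; \frac{\on{grad}(\on{Re}\,\pi)}{|\on{grad}(\on{Re}\,\pi)|^2},
\]
using the K\"ahler metric associated to $\Omega(\sigma)$. This is well-defined on $U$ since $d\pi \neq 0$ there, and satisfies $d\pi(V) = 1$, so its time-$s$ flow $\varphi_s$ carries $Y_0 \cap U$ into $Y_s$ for small real $s$. The standard computation (cf.~\cite[\S 10]{kav_tor_19} and Ruan) shows that the Liouville vector field $V$ is orthogonal to fibers and that $\mathcal{L}_V \Omega(\sigma)$ restricts to zero on each fiber; equivalently, along each fiber the imaginary direction $JV$ is Hamiltonian for $\on{Im}\,\pi$. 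It follows that $\varphi_s^*\omega_s = \omega_0$, i.e.~$\varphi_s$ is fiberwise symplectic. By compactness of $X$ and smoothness of $V$ on $U$, the flow $\varphi_s|_X$ is defined for all $s$ in some interval $[0,s_0]$ and the image stays in $U$; this produces a symplectic embedding $\varphi_{s_0}\colon X \to (Y_{s_0},\omega_{s_0})$. Composing with the symplectomorphism $(Y_{s_0},\omega_{s_0}) \simeq (Y,\omega_A)$ furnished by Lem.~\ref{lem:domain_implies_symplectic}(b) gives the required embedding.

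The main obstacle is the one that always arises in this type of argument: controlling the flow in a neighborhood of the singular central fiber. In our setting the singular locus of $\overline{\mathcal{X}}$ and the toric boundary $A_{\Delta(\sigma)}$ are both disjoint from $X$ by the moment-domain hypothesis, so the flow remains in the smooth locus for a uniform time $s_0 > 0$. The only subtlety is verifying that $V$ does not degenerate as $s \to 0$, which follows from $d\pi$ being nowhere zero on $U$ together with the smoothness of $\Omega(\sigma)$ across the central fiber already established in Lem.~\ref{lem:domain_implies_symplectic}.
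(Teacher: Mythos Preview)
Your approach is essentially the same as the paper's: both define the gradient-Hamiltonian vector field $V = \nabla(\on{Re}\,\pi)/|\nabla(\on{Re}\,\pi)|^2$ with respect to the K\"ahler metric of $\Omega(\sigma)$, use compactness of $X$ to flow for small positive time into a smooth fiber $Y_s$, and then invoke the identification $(Y_s,\omega_s)\simeq(Y,\omega_A)$. The paper's proof is considerably terser, citing \cite[Prop.~7.1]{kav_tor_19} directly for the fiberwise-symplectic property rather than sketching it; your extra discussion of compactifying into $\overline{\mathcal{X}}\subseteq\C\times\P^{r-1}$ and avoiding the toric boundary is unnecessary here, since the paper works on $\mathcal{X}(Y_\bullet,m)$ itself, which is already smooth with $\pi$ a submersion by construction.

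Two small corrections: the identification $(Y_s,\omega_s)\simeq(Y,\omega_A)$ is part (b) of the lemma immediately following Lem.~\ref{lem:domain_implies_symplectic}, not of Lem.~\ref{lem:domain_implies_symplectic} itself; and calling $V$ a ``Liouville vector field'' is nonstandard---it is the gradient-Hamiltonian vector field.
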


\begin{proof} We use the gradient Hamiltonian flow as in \cite[\S 7]{kav_tor_19} on $\mathcal{X} = \mathcal{X}(Y_\bullet,m)$. Consider the real part $h\colon\mathcal{X} \to \R$ of the projection $\pi:\mathcal{X} \to \C$ and consider the following vector-field $V$ on $\mathcal{X}$. 
\[V := \frac{\nabla h}{\langle \nabla h,\nabla h\rangle}\]
Here $\langle-,-\rangle$ is the associated Riemannian metric to $\Omega(\sigma)$, and $\nabla$ is the gradient vector-field with respect to this metric. For sufficiently small $t$, we have a flow
\[\Psi\colon[0,t] \times X \to \mathcal{X} \qquad\text{with}\qquad \Psi_0|_X = \on{Id}\]
By \cite[Prop 7.1]{kav_tor_19} $\Psi_s$ is a symplectic embedding to $Y_s$ which is symplectomorphic to $(Y,\omega_A)$.   
\end{proof}

\subsection{Construction of Kaveh embeddings} We conclude this section by applying \S\ref{subsec:Kahler_deformations} to the construction of symplectic embeddings from toric domains to projective varieties.

\vspace{3pt}

Let $(Y,A)$ be a smooth polarized variety equipped with a flag $Y_\bullet$. Recall that the Newton--Okounkov body of $(Y,A)$ with respect to $Y_\bullet$ is defined by
\[
\Delta(Y,A,Y_\bullet) = \lim_{m \to \infty} \frac{1}{m} \cdot\Delta_m(Y,A,Y_\bullet) \qquad\text{where}\qquad \Delta_m(Y,A,Y_\bullet) = \text{conv}\big(\nu_{Y_\bullet}(H^0(mA))\big)
\]
The limit here is a Hausdorff limit of sets.

\begin{prop}[Kaveh Embeddings] \label{thm:Kaveh_embeddings}  Let $(Y,A)$ be a smooth, polarized variety with a flag $Y_\bullet$ and let $X \subset \intr{X_\Omega}$ be a subset of the interior of the toric domain with moment polytope
\[\Omega = \frac{1}{m}\Delta_m(Y,A,Y_\bullet) \qquad\text{for some }m\]Then there is a symplectic embedding $X \to (Y,\omega_A)$. 
\end{prop}

\begin{proof} After passing to a multiple of $m$ we may assume that $mA$ is very ample and that $m \Omega \subseteq \on{int}(\Delta_m(Y,A,Y_\bullet))$. Pick a basis $\sigma$ of sections of $H^0(mA)$, so that $\Delta_m(Y,A,Y_\bullet) = \Delta(\sigma)$. By our hypothesis, $\Delta(\sigma)$ is a moment domain and $\sqrt{m} \cdot X \subseteq \on{int}(X_{\Delta(\sigma)})$. Here $\sqrt{m} \cdot X$ denotes the scaling of $X$ as a subset of $\C^n$. By Lem.~\ref{lem:embedding_from_family} we have a symplectic embedding
\[
\sqrt{m} \cdot X \to (Y,\omega_{mA})
\]
By rescaling the symplectic forms on both sides by $\frac{1}{m}$ we obtain the result. \end{proof}

In general it is possible that $\Omega \subseteq \Delta(Y,A,Y_\bullet)$ but $m\Omega \not\subseteq \Delta_m(Y,A,Y_\bullet)$ for any $m$. In the case of surfaces we have greater control from work of K\"uronya--Lozovanu \cite{kl_loc_18}. We begin with the following definition.

\begin{definition} \label{def:a_generic}
Fix a pseudo-polarized surface $(Y,A)$. We say that a valuation $\nu\colon\C(Y)\to\Z^2$ is $A$\emph{-generic} if $\nu=\nu_{Y_\bullet}$ for a flag $Y_\bullet$ on a birational model $\pi\colon\wt{Y}\to Y$ such that $\pi^*A\cdot Y_1>0$. We also call the Newton--Okounkov body $\Delta(Y,A,\nu)$ $A$\emph{-generic} when $\nu$ is $A$-generic.
\end{definition}

\begin{example} \label{ex:a_generic} If $Y_\bullet$ is a flag on $Y$ and $A$ is ample then $\nu_{Y_\bullet}$ is automatically $A$-generic.
\end{example}

\begin{lemma} \label{lem:a_generic}
Let $(Y,A)$ be a pseudo-polarized smooth surface and suppose that $\nu=\nu_{Y_\bullet}$ is $A$-generic. Then $\Delta(Y,A,\nu)$ is a convex moment domain.
\end{lemma}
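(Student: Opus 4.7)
The Newton--Okounkov body $\Delta=\Delta(Y,A,\nu)$ is automatically a convex subset of $\R_{\geq 0}^2$, so the claim reduces to producing a $2$-dimensional neighborhood of the origin inside $\Delta$. The first step is to unpack the $A$-generic hypothesis: by definition there is a birational model $\pi:\tilde Y\to Y$ and a flag $Y_\bullet$ on $\tilde Y$ realising $\nu$, with $\pi^*A\cdot Y_1>0$. Pullback gives $H^0(Y,mA)\cong H^0(\tilde Y, m\pi^*A)$ and preserves the flag valuation on the common function field, so $\Delta(Y,A,\nu)=\Delta(\tilde Y,\pi^*A,Y_\bullet)$. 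After this reduction we may work on $\tilde Y$ with $A$ big and nef and with $A\cdot Y_1>0$.

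Next apply the Lazarsfeld--Musta\c{t}\u{a} polygon description from \cite[Thm.~6.4]{lm_con_09}: writing $A_t = A - tY_1 = P_t + N_t$ for the Zariski decomposition and $\mu=\sup\{t\geq 0 : A_t\text{ is big}\}$, we have
\[\Delta = \big\{(t,s)\in\R^2 : 0\leq t\leq \mu,\ \alpha(t)\leq s\leq \beta(t)\big\},\]
with $\alpha,\beta$ continuous and piecewise linear in $t$. Since $A$ is nef, $N_0=0$ and $P_0=A$, so $\alpha(0)=0$ and $\beta(0)=P_0\cdot Y_1 = A\cdot Y_1>0$ by the $A$-generic hypothesis. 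Openness of the big cone forces $\mu>0$, and continuity of $\beta$ then gives $\beta(t)\geq \beta(0)/2$ on some interval $[0,\delta)$.

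The subtle point---and the main obstacle---is to upgrade $\alpha(0)=0$ to $\alpha(t)\equiv 0$ on a subinterval $[0,\delta')$. Since $\alpha$ is piecewise linear and nonnegative with $\alpha(0)=0$, this amounts to showing that the slope of its initial linear piece vanishes. Here one invokes the Zariski chamber perspective: the divisor $A$ lies in the nef chamber where the negative part is trivial, and the characterisation of corners of $\alpha$ from \cite[Thm.~B]{klm_con_12} records slope changes by components of $N_{t+\epsilon}\setminus N_t$ passing through $Y_0$. The role of the $A$-generic hypothesis is to force the first such passage to consist of null curves of $A$ that avoid $Y_0$, so that $\alpha$ remains identically zero on $[0,\delta')$ for some $\delta'>0$. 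Granting this, $\Delta$ contains the rectangle $[0,\min(\delta,\delta'))\times[0,\beta(0)/2)$, which is a neighborhood of the origin in $\R^2_{\geq 0}$, establishing the moment domain property.
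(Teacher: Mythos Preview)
Your overall structure mirrors the paper's: reduce to a flag on the birational model, invoke the Lazarsfeld--Musta\c{t}\u{a} slice description, and show that $\alpha$ vanishes on an initial interval while $\beta(0)>0$. You correctly deploy the $A$-generic hypothesis to obtain $\beta(0)=A\cdot Y_1>0$, exactly as the paper does.

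The gap is in your treatment of $\alpha$. You assert that ``the role of the $A$-generic hypothesis is to force the first such passage to consist of null curves of $A$ that avoid $Y_0$,'' but the $A$-generic condition is only $\pi^*A\cdot Y_1>0$; it constrains the intersection with the curve $Y_1$ and carries no information whatsoever about whether curves in $\on{Neg}(A_{t+\eps})\setminus\on{Neg}(A_t)$ pass through the \emph{point} $Y_0$. Your phrase ``Granting this'' flags the gap without closing it, and the KLM corner criterion you invoke only tells you where $\alpha$ fails to be differentiable, not that its initial slope is zero. The paper's argument for $\alpha\equiv 0$ near $0$ does not use $A$-genericity at all: it uses that $A$ is big and nef, so that the Zariski decomposition of $A_\eps=A-\eps Y_1$ has $N_\eps=0$ for all sufficiently small $\eps\geq 0$, whence $\alpha(\eps)=0$ outright. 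The two hypotheses divide the labor---nefness of $A$ handles the segment on the $t$-axis, $A$-genericity handles the segment on the $s$-axis---and your argument misassigns them.
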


\begin{proof}
Since $\Delta=\Delta(Y,A,\nu)$ is a convex polygon we only require that it has two adjacent edges on the coordinate axes. Observe that as $A$ is big and nef we have $N_\eps=0$ for all sufficiently small $\eps\geq0$ and so the lower boundary of $\Delta$ contains the line segment between $(0,0)$ and $(\eps,0)$ for some $\eps>0$. The upper bound for $\Delta$ at $t=0$ is given by $\beta_{Y_\bullet}(A,0)=A\cdot Y_1>0$ by assumption and thus the line segment between $(0,0)$ and $(0,A\cdot Y_1)$ is also contained in $\partial\Delta$.
\end{proof}

\begin{lemma}
Let $(Y,A)$ be a pseudo-polarized surface. If $\nu$ is $A$-generic then $\Delta_m(Y,A,\nu)$ is a convex moment domain for all sufficiently large $m\in\Z_{\geq0}$.
\end{lemma}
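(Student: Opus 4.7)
The plan is to show, for all $m$ sufficiently large, that $\Delta_m:=\Delta_m(Y,A,\nu)$ contains three non-collinear lattice points $(0,0)$, $(k_m,0)$, $(0,\ell_m)$ with $k_m,\ell_m>0$. Since $\Delta_m$ is automatically convex as a convex hull, it will then contain the triangle spanned by these points, which in turn contains the relative neighborhood $[0,k_m\ell_m/(k_m+\ell_m))^2$ of the origin in $\R^2_{\geq 0}$, confirming that $\Delta_m$ is a convex moment domain.

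I will work on the birational model $\pi\colon\wt Y\to Y$ underlying the valuation $\nu=\nu_{Y_\bullet}$, with locally smooth flag $Y_\bullet=\{y\in Y_1\subset\wt Y\}$ and pulled-back divisor $A':=\pi^*A$, which is big and nef with $A'\cdot Y_1>0$ by $A$-genericity. Each of the three points will be realized as $\nu(s)$ for a specific section $s\in H^0(mA')$. The origin will come from any section non-vanishing at $y$, which exists for large $m$ by Zariski--Fujita semiampleness of big and nef divisors on smooth surfaces (so that $mA'$ is base-point-free near $y$). The point $(0,\ell_m)$ will be produced by applying Fujita vanishing to the big and nef $A'$ to obtain $H^1(mA'-Y_1)=0$, which gives surjectivity of the restriction $H^0(mA')\to H^0((mA')|_{Y_1})$; the degree of $(mA')|_{Y_1}$ is $m(A'\cdot Y_1)$, so Riemann--Roch on the curve $Y_1$ (which we may assume smooth near $y$ by the local smoothness of the flag, passing to a further blow-up of $\wt Y$ if necessary) yields a section vanishing at $y$ to a prescribed positive order, and lifting gives the desired $s$. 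The point $(k_m,0)=(1,0)$ will come from an analogous surjectivity argument, using Fujita vanishing for $mA'-2Y_1$ to produce $s_0\in H^0(mA'-Y_1)$ with $s_0|_{Y_1}(y)\neq 0$; then multiplication by the canonical section $\mathbf{1}_{Y_1}\in H^0(\mO(Y_1))$ cutting out $Y_1$ gives a section $s_0\cdot\mathbf{1}_{Y_1}\in H^0(mA')$, and a direct valuation computation yields $\nu(s_0\cdot\mathbf{1}_{Y_1})=(1,0)$.

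The main obstacle is securing each of these positivity statements in the big-and-nef rather than the ample regime: Zariski--Fujita semiampleness on surfaces replaces the base-point-freeness that would follow automatically from ampleness, and Fujita vanishing replaces Kodaira vanishing in the two surjectivity arguments. The $A$-genericity hypothesis enters each of the three constructions uniformly, through the single inequality $A'\cdot Y_1>0$ which ensures positive degrees on $Y_1$ for large $m$ and thus governs both the existence of sections with prescribed vanishing at $y$ on $Y_1$ and the validity of the Fujita-vanishing estimates.
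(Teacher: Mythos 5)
Your skeleton---exhibit among the valuation vectors $\nu(H^0(mA))$ the origin together with one point on each positive coordinate axis, then take the convex hull---is the same as the paper's, but the positivity statements you invoke to produce the three sections are false in the big-and-nef regime, which is exactly the regime the lemma is about. First, a big and nef divisor on a smooth projective surface need \emph{not} be semiample: in Zariski's classical example (blow up $\pr^2$ at twelve very general points of a smooth cubic and take $D=4H-\sum E_i$), $D$ is big and nef but restricts to a non-torsion degree-zero line bundle on the strict transform of the cubic, so that curve lies in the stable base locus of every multiple of $D$. The Zariski--Fujita theorem only yields semiampleness when the restriction of $D$ to its stable base locus is already semiample, which is not automatic, so your construction of a section with $\nu(s)=(0,0)$ is unjustified. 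Second, Fujita vanishing requires an \emph{ample} twist: $A'=\pi^*A$ is never ample when $\pi$ is a nontrivial blow-up, and is only big and nef even when $\wt{Y}=Y$, so $H^1(mA'-Y_1)=0$ (and likewise the vanishing for $mA'-2Y_1$) does not follow. Kawamata--Viehweg does not rescue it, since $mA'-Y_1-K_{\wt{Y}}$ fails to be nef for every $m$ as soon as some irreducible curve $N$ with $A'\cdot N=0$ satisfies $(Y_1+K_{\wt{Y}})\cdot N>0$. If $A$ were ample your argument would go through by Serre vanishing, but then the statement is easy; the pseudo-polarized case is precisely the one your tools do not cover.

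The paper sidesteps both issues by routing through the K\"uronya--Lozovanu theory of valuative points: Lemma \ref{lem:a_generic} shows that $A$-genericity forces the limiting body $\Delta(Y,A,\nu)$ to contain a small right triangle $\Lambda$ at the origin, and \cite[Cor.~B(ii)]{kl_loc_18} then guarantees that rational points of $\Lambda$ on the coordinate axes are of the form $\frac{1}{m}\nu(s)$ for genuine sections $s\in H^0(mA)$; that result is the correct big-divisor replacement for the vanishing theorems you are reaching for. To repair your argument you would either need to quote that machinery or add the hypothesis that $y$ avoids the finitely many points of $Y_1\cap\on{Null}(A')$ and then argue around the failure of vanishing directly, neither of which your current write-up does.
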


\begin{proof}
Following \cite{kl_loc_18} we call a rational point in $\Delta(Y,A,\nu)$ \emph{valuative} if it is of the form $\frac{1}{m}\nu(s)$ for some $s\in H^0(mA)$, hence is also a rational point in $\Delta_m(Y,A,\nu)$. Since $\nu$ is $A$-generic it follows that a small right triangle $\Lambda$ with corner at the origin is contained in $\Delta(Y,A,\nu)$. \cite[Cor.~B(ii)]{kl_loc_18} implies then that any rational point in $
\Lambda$ lying on the coordinate axes is valuative and hence lies in some $\Delta_m(Y,A,\nu)$. Choose two rational points of the form $(a,0)$ and $(0,b)$; these both live in $\Delta_{m_\star}(Y,A,\nu)$ for some $m_\star\in\Z_{\geq0}$. Since the origin lies in $\Delta_{m_\star}(Y,A,\nu)$ when $A$ is nef, taking the convex hull we find that $\Delta_{m_\star}(Y,A,\nu)$ is a convex domain and therefore $\Delta_m(Y,A,\nu)$ is a convex domain for all $m\geq m_\star$.
\end{proof}

We sum up the relevant conclusions from this section in the following corollary.

\begin{cor} \label{cor:kaveh_embedding_domain_2d} Let $(Y,A)$ be a smooth polarized surface. Suppose $Y_\bullet$ is a locally smooth flag on $Y$ with associated Newton--Okounkov body $\Delta=\Delta(Y,A,Y_\bullet)$. If a moment domain $\Omega$ satisfies $\Omega \subseteq \on{int}(\Delta)$ then there there is a symplectic embedding $X_\Omega \to (Y,\omega_A)$. 
\end{cor}

\begin{proof}
This follows since in the situation of Kaveh embeddings we only consider valuations from flags on $Y$, which are $A$-generic by Ex.~\ref{ex:a_generic} when $A$ is ample.
\end{proof}

\section{Algebraic capacities} In this section we apply the tools developed in \S \ref{sec:alg}-\ref{sec:symp_emb} to prove the main results of this paper on algebraic capacities and symplectic embeddings. 

\subsection{Definition and properties} We begin by recalling the definition of the algebraic capacities of a (weakly) polarized surface. These are an algebraic incarnation of ECH capacities.

\begin{definition}[\!{\cite[Def.~1]{wor_alg_22}}] \label{def:alg_cap} The $k$th \emph{algebraic capacity} $\calg_k(Y,A)$ of a weakly polarized surface $(Y,A)$ is given by
\begin{equation} \label{eqn:alg_cap} \tag{$\spadesuit$}
    \calg_k(Y,A):=\inf_{D\in\on{Nef}(Y)_\Z}\{A\cdot D:\chi(\mO_Y(D))\geq k+\chi(\mO_Y)\}
\end{equation} \end{definition}
\noindent Definition \ref{def:alg_cap} is a kind of isoperimetric problem on the nef cone of $Y$. The pairing $A\cdot D$ is an \emph{area} quantity -- when $A$ is ample it is explicitly the symplectic area with respect to $\omega_A$ -- and $\chi(\mO_Y(D))$ is a \emph{volume} or \emph{index} quantity measuring in some sense the moduli of the divisor $D$. 

\vspace{3pt}

We will require several features of these invariants, including simplifications in sufficiently nice cases. These features are all found in \cite[\S2]{wor_alg_22} and apply whenever $A$ is big and nef.

\begin{lemma}[Minimizer] There is a nef $\Z$-divisor realising the infimum (\ref{eqn:alg_cap}).
\end{lemma}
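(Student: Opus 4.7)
The plan is to extract a minimizer from an arbitrary minimizing sequence by establishing three things in turn: the feasible set is nonempty, any minimizing sequence is trapped in a bounded region of $\on{NS}(Y)_\R$, and such a region meets $\on{Nef}(Y)_\Z$ in only finitely many numerical classes.

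First I would verify that the feasible set $S_k := \{D \in \on{Nef}(Y)_\Z : \chi(\mO_Y(D)) \geq k + \chi(\mO_Y)\}$ is nonempty and that the infimum is finite and nonnegative. Surface Riemann--Roch gives $\chi(\mO_Y(D)) = \chi(\mO_Y) + \tfrac{1}{2}(D^2 - K_Y \cdot D)$, so for any integer ample class $H$ on $Y$ we have $mH \in S_k$ for all $m \gg 0$; meanwhile $A \cdot D \geq 0$ for every $D \in \on{Nef}(Y)_\Z$ because $A$ is nef, so the infimum $\calg_k(Y,A)$ lies in $[0,\infty)$.

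Next I would show that $A$-sublevel sets inside the nef cone are bounded in $\on{NS}(Y)_\R$. Since $Y$ is a surface and $A$ is big and nef, $A^2 > 0$, so the Hodge index theorem gives $(A \cdot D)^2 \geq A^2 \cdot D^2$ for every class $D$, with equality only when $D$ is numerically proportional to $A$. For $D \in \on{Nef}(Y)_\R$ we have $D^2 \geq 0$, hence $A \cdot D = 0$ forces $D^2 = 0$ and then equality in Hodge index pins $D$ to be numerically trivial. Therefore $A \cdot -$ is a strictly positive linear functional on $\on{Nef}(Y)_\R$ away from the origin, and by compactness of a cross-section of the closed cone $\on{Nef}(Y)_\R$ there is a constant $c > 0$ with $A \cdot D \geq c \|D\|$ for every nef $D$, in any fixed norm on $\on{NS}(Y)_\R$. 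In particular each $A$-sublevel set inside $\on{Nef}(Y)_\R$ is bounded.

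To conclude, note that both the objective $A \cdot D$ and the constraint value $\chi(\mO_Y(D))$ depend only on the numerical class of $D$, so (\ref{eqn:alg_cap}) descends to a minimization over the torsion-free quotient of $\on{Nef}(Y)_\Z$. This quotient is a lattice in $\on{NS}(Y)_\R$ and so intersects every bounded region in only finitely many points. A minimizing sequence $(D_n)$ lies eventually in $\{A \cdot D \leq \calg_k(Y,A) + 1\}$ and hence takes only finitely many numerical values; the minimum over this finite candidate set in $S_k$ is then attained by some $D \in \on{Nef}(Y)_\Z$. The main obstacle is the second step: the strict positivity of $A \cdot -$ on the nef cone relies essentially on Hodge index, which is where the surface hypothesis enters; once this compactness is in hand, the rest is a routine combination of Riemann--Roch and the finite generation of the N\'eron--Severi group.
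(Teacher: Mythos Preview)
Your argument is correct. The paper does not actually prove this lemma in the text; it simply records it among ``several features of these invariants'' and points to \cite[\S2]{wor_alg_22} for proofs, noting that they apply whenever $A$ is big and nef. Your write-up is therefore a self-contained substitute for that citation: nonemptiness via Riemann--Roch on an ample multiple, boundedness of $A$-sublevel sets in $\on{Nef}(Y)_\R$ via Hodge index (using $A^2>0$ from bigness and $D^2\ge 0$ from nefness to force $A\cdot D=0\Rightarrow D\equiv 0$), and finiteness of lattice points in a bounded region of $\on{NS}(Y)_\R$. The only place worth tightening is the phrase ``by compactness of a cross-section'': you should make explicit the standard cone argument that a closed cone on which a linear functional is strictly positive away from the origin has bounded level sets (normalize a hypothetically unbounded sequence and pass to a limit on the unit sphere to get a contradiction). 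With that sentence added, the proof stands on its own and matches the level of detail one would expect in \cite{wor_alg_22}.
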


\begin{lemma}[Alternative Formulas] Let $(Y,A)$ be a pseudo-polarized surface. Then we have the following alternative formulas for the algebraic capacities.
\begin{itemize}
    \item (Toric) If $Y$ is toric, then
    \[\calg_k(Y,A) = \chom_k(Y,A) := \inf_{D\in\on{Nef}(Y)_\Z}\{A\cdot D:h^0(D)\geq k+1\}\]
    \item (Pseudo-Effective) If $Y$ is smooth or toric, $\calg$ can be computed with pseudo-effective divisors
    $$\calg_k(Y,A)=\inf_{D\in\neb(Y)_\Z}\{A\cdot D:\chi(\mO_Y(D))\geq k+\chi(\mO_Y)\}$$
    \item (Effective Anticanonical) If $-K_Y$ is effective, then it is equivalent to optimise over pseudo-effective $\Q$- or $\R$-divisors, and moreover in this case
    \[\calg_k(Y,A) \ge \chom_k(Y,A)\]
    \item (Index) If $Y$ is smooth or has only canonical singularities then
    $$\calg_k(Y,A)=\inf_{D\in\on{Nef}(Y)_\Z}\{A\cdot D:I(D)\geq 2k\}$$
    where $I(D)$ denotes the \emph{index} of the divisor $D$ (c.f.~\cite[Prop.~4,3]{cri_sym_19}) given by the formula \[I(D) := D\cdot(D-K_Y) = 2(\chi(\mO_Y(D))-\chi(\mO_Y))\]
\end{itemize}
\end{lemma}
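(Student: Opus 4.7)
My plan is to address the four reformulations in order of increasing subtlety, working from identities that are direct consequences of surface Riemann--Roch up to those that require the full strength of Zariski decomposition.

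The Index formula comes first and is essentially a rewriting. On a smooth surface, Riemann--Roch reads $\chi(\mO_Y(D)) = \chi(\mO_Y) + \tfrac{1}{2}D\cdot(D-K_Y)$, so the defining condition $\chi(\mO_Y(D)) \ge k + \chi(\mO_Y)$ in (\ref{eqn:alg_cap}) is equivalent to $I(D) \ge 2k$. For surfaces with only canonical singularities, the same Riemann--Roch identity holds (canonical singularities have integral discrepancies and the canonical class $K_Y$ pulls back to the canonical class of a resolution), so the reformulation survives verbatim.

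For the Toric case I would apply Demazure vanishing: on any complete toric variety, every nef line bundle has vanishing higher cohomology. Combined with $\chi(\mO_Y) = 1$ on a rational variety, this gives $\chi(\mO_Y(D)) = h^0(D)$ for nef $\Z$-divisors $D$, so the threshold $\chi \ge k+1$ becomes $h^0 \ge k+1$ and $\calg_k(Y,A) = \chom_k(Y,A)$.

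For the Pseudo-Effective reformulation, the inequality $\inf_{\on{pseff}} \le \inf_{\on{nef}}$ is automatic since big-and-nef divisors are pseudo-effective. For the converse, I would use Zariski decomposition: given a pseff $\Z$-divisor $D$ with $\chi(\mO_Y(D)) \ge k + \chi(\mO_Y)$, write $D = P + N$. Since $A$ is nef and $N$ effective, $A \cdot P \le A \cdot D$. The Moduli property $h^0(mD) = h^0(mP)$ combined with asymptotic Riemann--Roch shows that $P$ carries the same numerical content as $D$. Clearing denominators (passing to a sufficiently divisible integer multiple and rescaling $A$ accordingly), or in the toric case arguing directly with $h^0$ in place of $\chi$, produces the desired nef $\Z$-divisor with no worse intersection against $A$.

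Finally, for the Effective Anticanonical case, I would invoke Serre duality $h^2(D) = h^0(K_Y - D)$. When $-K_Y$ is effective and $D$ has positive intersection with an ample class, $K_Y - D$ has negative degree against ample, forcing $h^0(K_Y - D) = 0$. Hence $\chi(\mO_Y(D)) = h^0(D) - h^1(D) \le h^0(D)$, which yields $\calg_k(Y,A) \ge \chom_k(Y,A)$. Effectivity of $-K_Y$ also allows the pseff-to-nef argument to run over $\Q$- and $\R$-divisors: Zariski decomposition produces $\Q$-divisors (or $\R$-divisors), and the associated volume and Euler-characteristic comparisons are invariant under rescaling, so no rounding obstruction arises.

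I expect the main obstacle to be the bookkeeping in the pseudo-effective reformulation. Zariski decomposition outputs $\Q$-divisors, so converting the positive part $P$ of a pseff $\Z$-divisor into a nef $\Z$-divisor that still witnesses the Euler-characteristic threshold requires either clearing denominators with a careful rescaling (exploiting the homogeneity of $\calg_k$ in $A$) or a direct analysis of $\lceil P \rceil$ via the structure of $\on{Null}(P)$. This is the step where the comparative cleanliness of the toric case (no higher cohomology) and the effective-anticanonical case (Serre-duality vanishing) truly distinguishes itself from the general smooth case.
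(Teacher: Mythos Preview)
The paper does not prove this lemma itself; it attributes all four items to \cite[\S2]{wor_alg_22}. Your sketch correctly identifies the main ingredients one finds there: Riemann--Roch for the index reformulation, Demazure vanishing for the toric case, Serre duality together with effectivity of $-K_Y$ for the anticanonical inequality, and Zariski decomposition for passing between nef and pseudo-effective divisors.

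Two small corrections. First, your justification of $\inf_{\neb}\le\inf_{\on{Nef}}$ via ``big-and-nef divisors are pseudo-effective'' is insufficient, since the infimum is over all nef $\Z$-divisors, not only big ones. The inclusion $\on{Nef}(Y)\subseteq\neb(Y)$ holds on any projective variety because $D+\epsilon H$ is ample for $D$ nef and $H$ ample, so $D$ lies in the closure of the effective cone. Second, for the canonical-singularities case the relevant fact is not that discrepancies are integral; rather, surface canonical (du Val) singularities are rational Gorenstein, so Riemann--Roch holds in its classical form and the identity $I(D)=2(\chi(\mO_Y(D))-\chi(\mO_Y))$ persists.

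The integrality obstacle you flag in the pseudo-effective step is genuine and is where the real content of \cite[Prop.~2.11]{wor_alg_22} lies. Your proposed ``clearing denominators with a careful rescaling'' does not work as stated: homogeneity of $\calg_k$ in $A$ does not interact with the constraint $I(D)\ge 2k$, which is quadratic in $D$ and independent of $A$, so rescaling $A$ gains nothing and rescaling $D$ changes the index nonlinearly. The paper's own Prop.~\ref{prop:eff_nef} illustrates one viable technique---iterating round-down followed by Zariski decomposition until the process stabilizes on a nef $\Z$-divisor---though there it is applied to $\chom$ rather than $\calg$.
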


\noindent Finally, the following result provides a connection linking algebraic capacities to ECH capacities.

\begin{thm}[\!{\cite[Thm.~1.1]{wor_ech_21} + \cite[Thm.~1.5]{cw_ech_20}}] \label{thm:cw_ech_vs_alg}
Let $(Y,A)$ be a polarized rational surface that is either smooth or toric. If $(X,\omega)$ is a star-shaped domain that symplectically embeds into $(Y,\omega_A)$ then
$$\cech_k(X,\omega)\leq\calg_k(Y,A)$$
Moreover, if $(Y,A)$ is toric with moment polytope $\Omega\subseteq\R_{\geq0}^2$ then
$$\cech_k(X_\Omega)=\calg_k(Y,A)$$
\end{thm}

\subsection{Weight sequences} \label{sec:weight_seq} We next introduce an algebraic analogue of weight sequences.

\begin{definition}
A \emph{tower of (pseudo-)polarized rational surfaces} $(\mathcal{Y},\mathcal{A})$ is a sequence 
\begin{equation} \label{eqn:tower} \tag{$\star$}
    Y_n\overset{\pi_n}{\longrightarrow} Y_{n-1}\overset{\pi_{n-1}}{\longrightarrow} \dots \overset{\pi_2}{\longrightarrow} Y_{1}\overset{\pi_1}{\longrightarrow} Y_0=\pr^2
\end{equation}
of one-point blowups such that each surface $Y_i$ comes with an ample (or big and nef) $\R$-divisor $A_i$ satisfying
$$A_0=\mO_{\pr^2}(c)\qquad A_i=\pi_i^*A_{i-1}-a_iE_i$$
Here $E_i$ is the exceptional fibre of the blowup $\pi_i$. The \emph{weight sequence} of $(\mathcal{Y},\mathcal{A})$ is
\[\on{wt}(\mathcal{Y},\mathcal{A}):=(c;a_1,\dots,a_n)\]
We say that a pseudo-polarized rational surface $(Y,A)$ \emph{admits} the weight sequence $w = (c;a_1,\dots,a_n)$ if there is a tower  $(\mathcal{Y},\mathcal{A})$ with $\on{wt}(\mathcal{Y},\mathcal{A}) = w$ and a birational morphism $\ph\colon Y_n\to Y$ with $\ph^*A=A_n$.
\end{definition}

\begin{remark} We note that this sense of weight sequence has been used or observed in several previous works including \cite{wor_alg_22,wor_tow_22,chmp_inf_20}. \end{remark}

Weight sequences are particularly useful when they satisfy some additional hypotheses.

\begin{definition} \label{def:pos_polytopal_ws} The weight sequence $w = (c;a_1,\dots,a_n)$ of a tower $(\mathcal{Y},\mathcal{A})$ is called
\begin{itemize}
    \item \emph{positive} if there is a (possibly different) tower $(\mathcal{Y}',\mathcal{A}')$ with weight sequence $w$ such that each of the surfaces $Y_i'$ in the tower has effective anticanonical divisor,
    \item \emph{polytopal} if there is a (possibly different) tower $(\mathcal{Y}',\mathcal{A}')$ with weight sequence $w$ such that each surface $Y_i$, each blowup $\pi_i$, and each divisor $A_i$ is toric.
\end{itemize}
Equivalently, a weight sequence $w$ is polytopal if it is the weight sequence of a convex, rational-sloped moment domain (see Def.~\ref{def:cvx_weight_seq}). Note that polytopal weight sequences are positive.
\end{definition}

The main result of this section is that, in the positive case, algebraic capacities are determined by the weight sequence. This is analogous to similar results for ECH capacities \cite[Thm.~A.1]{cri_sym_19}. 

\begin{prop} \label{prop:bounds}
Let $(Y,A)$ and $(Y',A')$ be two pseudo-polarized rational surfaces with the same positive weight sequence $w = (c;a_1,\dots,a_n)$. Then the algebraic capacities of $(Y,A)$ and $(Y',A')$ coincide, i.e.
\[\calg_k(Y,A)=\calg_k(Y',A') \qquad\text{for all}\qquad k \in \N\]
\end{prop}

We will make significant use of the fact that $\calg_k(Y,A)$ can be computed using nef divisors or effective divisors \cite[Prop.~2.11]{wor_alg_22}.

\begin{proof} First, assume that $(Y',A') = (Y_n',A_n')$ for some tower $(\mathcal{Y}',\mathcal{A}')$ where each surface $Y'_i$ has effective anticanonical divisor. 

\vspace{3pt}

Begin by noting that a tower as in (\ref{eqn:tower}) induces an identification $\on{Pic}(Y)_\R\cong\R^\rho$ where $\rho$ is the Picard rank of $Y$ as follows. Let $p_i=\pi_n\circ\pi_{n-1}\circ\dots\circ\pi_{i+1}$ and select the natural basis $\{H,B_1,\dots,B_n\}$ of $\on{Pic}(Y)_\R$ given by
\[H = B_0 = p_0^*\mO_{\pr^2}(1) \qquad \text{and}\qquad B_i = p_i^*E_i\]
In this basis, the intersection pairing $Q_Y$ on $\on{Pic}(Y)_\R$ is the diagonal bilinear form
\[\left[\begin{array}{cc}
1 & 0\\
0 & -\on{Id}_n
\end{array}\right] \]
We have an analogous basis $\{H',B_1',\dots,B_n'\}$ of $\on{Pic}(Y')_\R \simeq \R^\rho$ and thus an isomorphism
\[\Phi:(\on{Pic}(Y)_\R,Q_Y) \simeq (\on{Pic}(Y')_\R,Q_Y') \qquad\text{such that}\qquad \Phi(K_Y) = K_{Y'}\]
We will mostly use the notation $D' = \Phi(D)$. Note that the index of $D$ and $D'$ agree since
\[I(D)= D\cdot(D-K_Y) = D'\cdot(D'-K_{Y'}) = I(D')\]
Also note that if a divisor class $D$ has positive index then $D \cdot H > 0$ (or equivalently $D' \cdot H' > 0$).

\vspace{3pt}

We next claim that $\Phi$ and $\Phi^{-1}$ map nef divisors of positive index to effective divisors. That is
\begin{equation} \label{eqn:Phi_nef_to_eff}
\Phi(\{D \in \on{Nef}(Y)_\Z \; : \; I(D) > 0\}) \subseteq \neb(Y')_\Z
\end{equation}
\begin{equation} \label{eqn:Phi_inv_nef_to_eff}
\Phi^{-1}(\{D' \in \on{Nef}(Y')_\Z \; : \; I(D') > 0\}) \subseteq \neb(Y)_\Z
\end{equation}
We first prove (\ref{eqn:Phi_nef_to_eff}). Choose $D \in\on{Nef}(Y)_\Z$. Since $I(D) = I(D') > 0$, we have
\[0 > -D \cdot H = -D' \cdot H'\]
Since $H'$ is nef, $-D'$ cannot be effective. Therefore
\[h^2(D') = h^0(K_Y' - D') = 0\]
It then follows from $I(D') \geq0$ that $h^0(D')\geq 1$ and thus that $D' \in\neb(Y')_\Z$. To see (\ref{eqn:Phi_inv_nef_to_eff}), note that if $D$ is an ample $\Z$-divisor on $Y$ then
\[I(mD)=mD\cdot(mD-K_Y)=m^2D^2-mD\cdot K_Y>0\quad\text{for all $m\gg0$}\]
In particular, the ample cone over $\R$ can be written as
\[\on{Amp}(Y)_\R = \on{cone}\{D\in\on{Amp}(Y)_\Z:I(D)\geq0\}\]
Note that for a subset $S\subseteq \on{Div}(Y)_\Z$,  we have a well-defined (closed) dual cone
$$S^\vee = \on{cone}(S)^\vee :=\{D\in\on{Div}(Y)_\Z:D \cdot S \geq0\text{ for all $S\in S$}\}$$
Since the closure of the ample cone is the nef cone, and the effective cone and the nef cone are dual, we have
\begin{equation} \label{eqn:Phi_inv_nef_to_eff_2}
\{D \in\on{Amp}(Y)_\Z\; : \; I(D)\geq0\}^\vee=\on{Amp}(Y)^\vee_\Z = \neb(Y)_\Z \end{equation}
On the otherhand, we know by (\ref{eqn:Phi_nef_to_eff}) that
\begin{equation} \label{eqn:Phi_inv_nef_to_eff_1}\{D \in\on{Amp}(Y)_\Z\; : \; I(D)\geq0\} \subseteq \Phi^{-1}(\neb(Y')_\Z)\end{equation}
By dualizing (\ref{eqn:Phi_inv_nef_to_eff_1}) and applying (\ref{eqn:Phi_inv_nef_to_eff_2}), we acquire (\ref{eqn:Phi_inv_nef_to_eff}) as follows.
\[\neb(Y)_\R \supseteq \Phi^{-1}(\neb(Y')_\R^\vee) \supseteq \Phi^{-1}(\{D \in \on{Nef}(Y')_\Z \; : \; I(D) > 0\})\]
Finally, we show that the algebraic capacities of $(Y',A')$ and $(Y,A)$ bound each other. Simply apply (\ref{eqn:Phi_nef_to_eff}) to see that
\[\calg_k(Y',A') = \inf_{D'\in\neb(Y')_\Z}\{A'\cdot D':I(D')\geq 2k\} \ge \inf_{D\in\on{Nef}(Y)_\Z}\{A\cdot D:I(D)\geq 2k\} = \calg_k(Y,A) \]
The opposite direction is an identical use of (\ref{eqn:Phi_inv_nef_to_eff}). This concludes the proof in the case where $(Y',A') = (Y_n',A_n')$ for a tower $(\mathcal{Y}',\mathcal{A}')$ where each surface $Y'_i$ has effective anticanonical divisor
\vspace{3pt}

In the general case, by Def.~\ref{def:pos_polytopal_ws}, there is a tower $(\mathcal{Y}'',\mathcal{A}'')$ with weight sequence $w$ such that each surface $Y''_i$ in the tower has effective anticanonical divisor. Thus by the special case, we have 
\[
\calg_k(Y,A) = \calg_k(Y''_n,A_n'') = \calg_k(Y'_n,A_n') \qquad\text{for each }k \qedhere
\]\end{proof}

\begin{remark} The proof of Prop.~\ref{prop:bounds} used the mutual identification of the Picard groups of $Y$ and $Y'$ to $\R^\rho$ with the given bilinear form to translate between divisor classes on $Y$ and $Y'$, and to convert positivity properties on $Y$ (nefness, index sufficiently large) to positivity properties on $Y'$ using appropriate vanishing. This implies that a tower with effective $-K_Y$ is the most efficient way to blow-up, in the sense that the effective cone is maximised. We expect that towers where the centres are in very general position will have `minimal' effective cone in this sense.
\end{remark}

\begin{remark} The proof of Prop.~\ref{prop:bounds} simplifies if $-K_Y$ is effective, since the use of (\ref{eqn:Phi_nef_to_eff}) also works in place of (\ref{eqn:Phi_inv_nef_to_eff}).
\end{remark}

In practice, it is often more convenient to show that a weight sequence is polytopal than positive as this is demonstrable through purely combinatorial arguments and, as we shall see, plays well with the theory of Newton--Okounkov bodies.

\begin{example}
Note that any weight sequence of the form $(c,\eps_1,\dots,\eps_n)$ with $\eps_i>0$ small relative to $c$ is polytopal and hence positive. One can thus compute the algebraic capacities of any pseudo-polarized rational surface coming from a tower with this weight sequence by the much easier task of computing the algebraic capacities for a toric surface with this weight sequence. We will develop this further in \S\ref{sec:higher_blowups}.
\end{example}

\subsection{Calculation via Newton-Okounkov bodies} \label{sec:calc_via_NO_bodies} We now use Newton-Okounkov bodies to compute algebraic capacities and prove general embedding results. 

\begin{thm} \label{thm:alg_ech}
Let $(Y,A)$ be a pseudo-polarized smooth rational surface. Suppose $(Y,A)$ has a Newton--Okounkov body $\Delta$ whose weight sequence is admitted by $(Y,A)$.
Then, the ECH capacities of $F_\Delta$ and the algebraic capacities of $(Y,A)$ agree:
\[ \calg_k(Y,A) = \cech_k(F_\Delta)\]
If $\Delta$ is $A$-generic then
\[ \calg_k(Y,A) = \cech_k(X_\Delta) \]
\end{thm}

\begin{proof} By assumption the weight sequence of $(\mathcal{Y},\mathcal{A})$ is polytopal and so positive. Therefore we can apply Prop.~\ref{prop:bounds} to see that
\[\calg_k(Y,A) = \calg_k(Y_\Delta,A_\Delta)\]
The algebraic capacities of a toric surface agree with the ECH capacities of the corresponding free toric domain \cite[Thm.~1.1]{wor_ech_21} and with the toric domain $X_\Delta$ if $\Delta$ is $A$-generic and hence a moment domain.
\end{proof}

As an application of Thm.~\ref{thm:alg_ech} we have the following sharp embedding result.

\begin{cor} \label{cor:conc_emb} Let $(Y,A)$ be a polarized smooth rational surface. Suppose there is a Newton--Okounkov body $\Delta(Y,A,Y_\bullet)$ coming from a flag $Y_\bullet$ on $Y$ such that $(Y,A)$ admits the weight sequence $\on{wt}(\Delta)$. Then for any concave moment domain $\Omega$ the following are equivalent:
\begin{itemize}
    \item[(a)] $\cech_k(X_\Omega) \le \calg_k(Y,A)$ for all $k \in \N$.
    \vspace{3pt}
    \item[(b)] There is a symplectic embedding $X_{c\Omega} \to (Y,\omega_A)$ for any $0 < c < 1$.
\end{itemize}
\end{cor}

\begin{proof} First, (b) implies (a) by Thm.~\ref{thm:cw_ech_vs_alg}. To show that (a) implies (b), let $\Delta := \Delta(Y,A,Y_\bullet)$ be the Newton--Okounkov body $(Y,A)$. By Thm.~\ref{thm:alg_ech} we know that
\[\cech_k(X_\Delta) = \cech_k(F_\Delta) = \calg_k(Y,A) \ge \cech_k(X_\Omega)\]
Then by Cor.~\ref{cor:kaveh_embedding_domain_2d} and Prop.~\ref{prop:cg_embedding} we have for any $b,c$ with $0 < c < b < 1$ there is a symplectic embedding
\[
X_{c\Delta} \xrightarrow{\text{Prop. \ref{prop:cg_embedding}}} X_{b\Omega} \xrightarrow{\text{Cor. \ref{cor:kaveh_embedding_domain_2d}}} (Y,\omega_A) \qedhere
\]\end{proof}

\subsection{Asymptotics via Zariski decomposition} \label{sec:zariski_methods} We next apply Zariski decomposition to study the asymptotics of ECH capacities.

\vspace{3pt}

In \cite[Thm.~4.2]{wor_alg_22} it was shown that for a big and nef divisor $A$ on a smooth or toric projective surface $Y$ we have
$$\lim_{k\to\infty}\frac{\calg_k(Y,A)^2}{k}=2A^2$$
This matches the asymptotics found by Cristofaro-Gardiner--Hutchings--Ramos \cite{chr_asy_15} for ECH capacities. We generalise this to cover the case where $A$ is a big divisor.

\begin{prop} \label{prop:weyl_big} Suppose $(Y,A)$ is a smooth or toric weakly polarized surface. Then
$$\lim_{k\to\infty}\frac{\calg_k(Y,A)^2}{k}=2\on{vol}(A)$$
\end{prop}

\begin{proof} As in \cite[Prop.~4.19]{wor_alg_22}, we reduce the toric case to the smooth case using a limiting argument. Let $A=P+N$ be the Zariski decomposition of $A$ and suppose that $P$ is a $\rz$-divisor
\[P=qP_0 \qquad\text{with}\qquad P_0 \in \on{Div}(Y)_\Z \text{ and }q \in \R_{>0}\]
We observe that $I(d_kP_0) = d_kP_0\cdot(d_kP_0-K_Y)$ is bounded below by $2k$ when
\[d_k\geq\frac{1}{2P_0^2}\left(P_0\cdot K_Y+\sqrt{(P_0\cdot K_Y)^2+8kP_0^2}\right)\]
Thus for $d_k$ satisfying this bound, the algebraic capacities satisfy
\[\frac{\calg_k(Y,A)^2}{k}\leq\frac{\left(\lceil d_k\rceil A\cdot P_0\right)^2}{k}=\frac{\left(\lceil d_k\rceil qP_0^2\right)^2}{k}\]
Taking the limit as $k \to \infty$ and applying Lem.~\ref{lem:volume_zariski}, we find that
\[\lim_{k\to\infty}\frac{\calg_k(Y,A)^2}{k}\leq\lim_{k\to\infty}\frac{2kq^2P_0^2}{k}=2q^2P_0^2=2P^2=2\on{vol}(A)\]
Continuity implies that the same result holds for any big $\R$-divisor $A$ -- note this indeed follows since if $A$ is a $\Q$-divisor then so are $P$ and $N$ by Prop.~\ref{prop:zariski_decomposition}. Hence if $A$ is an $\rz$-divisor then so are $P$ and $N$. 

\vspace{3pt}

For the converse inequality we observe that for any nef divisor $D$ we have $A\cdot D\geq P\cdot D$ since $N$ is pseudo-effective. Hence
\[\calg_k(Y,A)\geq\calg_k(Y,P) \quad\text{and thus}\quad \lim_{k\to\infty}\frac{\calg_k(Y,A)^2}{k}\geq\lim_{k\to\infty}\frac{\calg_k(Y,P)^2}{k}=2P^2=2\on{vol}(A)\]
since $P$ is big and nef and so \cite[Thm.~4.2]{wor_alg_22} applies to $(Y,P)$.
\end{proof}

\begin{remark} The main philosophical takeaway is that for large $k$, the optimisers computing the $k$th algebraic capacity are close to multiples of the positive part $P$ of $A$. Thus we can approximate the $k$th algebraic capacity with multiples of $P$. \end{remark}

\begin{example} Consider the big non-nef divisor $A=H+E$ on $Y=\on{dP}_8$. $H$ is the positive part of $A$ and
$$\calg_k(Y,A)=\calg_k(Y,H)=\calg_k(\pr^2,\mO(1))$$
\end{example}

We also recreate results from \cite[\S3]{wor_alg_22} for big divisors to study their sub-leading asymptotics.

\begin{lemma} \label{lem:optimal} Suppose $A$ is a big $\Z$-divisor on a smooth surface $Y$ and let $A=P+N$ be its Zariski decomposition. Suppose that $D$ is an optimizer for the algebraic capacities in that
\[D \cdot A = \calg_k(Y,A) \qquad\text{where} \qquad k = I(D)\]
Then if $I(D)$ is sufficiently large, $D+P$ is also an optimizer.
\[(D + P) \cdot A = \calg_l(Y,A) \qquad\text{where} \qquad l = I(D+P)\].
\end{lemma}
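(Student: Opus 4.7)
The plan is to bracket $(D+P)\cdot A$ between matching upper and lower bounds for $\calg_l(Y,A)$, using nefness for the former and the optimality of $D$ via a shift-by-$(-P)$ argument for the latter. The delicate step will be an index computation that relies on the Weyl-type asymptotics of Prop.~\ref{prop:weyl_big}.

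First, since $P$ is nef as the positive part of the Zariski decomposition and $D$ is nef as an optimiser, the sum $D+P$ is nef. After passing to a common integer multiple if $P$ is only a $\Q$-divisor, $D+P$ is then an admissible competitor for $\calg_l(Y,A)$ with $l = I(D+P)$, so
\[
\calg_l(Y,A) \le (D+P)\cdot A = D\cdot A + P\cdot A = \calg_k(Y,A) + P^2,
\]
where the last equality uses $P\cdot N' = 0$ for every irreducible component $N'$ of $N$, which gives $P\cdot A = P\cdot(P+N) = P^2$.

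For the reverse inequality, let $D'$ be any nef $\Z$-divisor competing for $\calg_l(Y,A)$, and consider the shifted divisor $D'' := D'-P$. If $D''$ can be shown to be pseudo-effective with index large enough to compete for $\calg_k$, then the pseudo-effective formulation of $\calg_k$ (applicable since $Y$ is smooth) yields $D''\cdot A \ge D\cdot A$, and adding $P\cdot A = P^2$ to both sides gives $D'\cdot A \ge (D+P)\cdot A$, closing the argument.

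The main obstacle is verifying these two conditions on $D''$ when $I(D)$ is large. A direct expansion gives
\[
I(D'-P) = I(D') - 2D'\cdot P + P^2 + P\cdot K_Y,
\]
which, combined with the lower bound on $I(D')$ coming from $D'$ competing at level $l$, reduces the index estimate to controlling $(D-D')\cdot P$ from below. Here the remark following the lemma statement is essential: by Prop.~\ref{prop:weyl_big} asymptotically optimal divisors must lie close to the ray of positive multiples of $P$, so the cross term $(D-D')\cdot P$ is bounded independently of $k$ while $P^2 > 0$ since $P$ is big. For $I(D)$ sufficiently large the positive contribution dominates and the index bound is secured. The same asymptotic concentration forces $D'$ to be close to a large positive multiple of $P$ in the N\'eron--Severi space, so that $D'-P$ remains in the pseudo-effective cone, completing the proof.
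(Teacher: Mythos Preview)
Your overall plan is reasonable in outline, and the upper bound $\calg_l(Y,A)\le(D+P)\cdot A$ is fine. The lower bound, however, has real gaps.

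The central issue is your appeal to Prop.~\ref{prop:weyl_big} to deduce that ``asymptotically optimal divisors must lie close to the ray of positive multiples of $P$''. The Weyl law controls only the \emph{values} $\calg_k(Y,A)$; it says nothing about where optimizers sit in $\on{NS}(Y)_\R$. Two nef divisors can have nearly equal $A$-degree and index while being far apart in N\'eron--Severi space, so you cannot conclude that $(D-D')\cdot P$ is bounded, nor that $D'-P$ is pseudo-effective, from asymptotics of $\calg_k$ alone. The remark after Prop.~\ref{prop:weyl_big} that optimizers are ``close to multiples of $P$'' is explicitly labeled a philosophical takeaway, not a usable estimate, and in any case cannot be invoked here without circularity: that heuristic is what Lem.~\ref{lem:optimal} and its corollary are meant to \emph{establish}. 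Concretely, to get $I(D'-P)\ge k$ you need $(D+P-D')\cdot P\ge 0$; from $D'\cdot A\le(D+P)\cdot A$ you only get $(D+P-D')\cdot P\ge (D'-D)\cdot N$, and the right-hand side has no obvious sign. The pseudo-effectiveness of $D'-P$ is even less clear: nothing you have written rules out $D'$ lying near a different face of the nef cone.

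Note also that the paper's proof singles out one specific extra ingredient needed beyond the big-and-nef argument of \cite[Prop.~4.5]{wor_alg_22}: that $P\cdot D\to\infty$ as $I(D)\to\infty$, proved via $P\cdot D\ge\calg_k(Y,P)$. Your sketch never invokes this, which is a strong sign that the mechanism you are reaching for is not the one that actually works. A smaller but separate problem: when $A$ is a $\Z$-divisor, $P$ is only a $\Q$-divisor in general, and ``passing to a common integer multiple'' replaces $P$ by $mP$ and so changes the statement you are trying to prove; this needs a different fix (e.g.\ working with the primitive $\Z$-divisor $P_0$ as in the subsequent corollary).
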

\begin{proof} The argument exactly follows \cite[Prop.~4.5]{wor_alg_22} where the only extra ingredient required is the fact that $P\cdot D\to\infty$ as $I(D) \to\infty$, which followed automatically when $P=A$. This follows in general since $P\cdot D\geq\calg_k(Y,P)\to\infty$ as $k = I(D) \to\infty$.
\end{proof}

\begin{cor} Let $A$ be a big $\rz$-divisor on $Y$ with Zariski decomposition $A=P+N$. Let $P=qP_0$ for $P_0$ a primitive $\Z$-divisor. Then there are $k_0\in\Z_{\geq0}$ and $s_1<\dots<s_r\in\R$ such that $s_r-s_1<qP_0^2$ and
$$\{\calg_k(Y,A):k\geq k_0\}=\{s_i+mqP_0^2:m\geq0,i=1,\dots,r\}$$
\end{cor}

\begin{proof}
This follows the proof of \cite[Cor.~4.6]{wor_alg_22} with Lem.~\ref{lem:optimal} playing the role of \cite[Prop.~4.5]{wor_alg_22}.
\end{proof}

It also follows that optimisers for $\calg_k(Y,A)$ for large enough $k$ take the form $D_i+mP_0$ for some fixed set $\{D_1,\dots, D_r\}$ of effective divisors and $m\geq0$. We can conclude from this that the sub-leading asymptotics of $\calg_k(Y,A)$ are $O(1)$ but nonconvergent when $A$ is a big $\rz$-divisor, matching the conclusion in the big and nef case \cite[Thm.~4.10]{wor_alg_22}. The proof is identical given our setup.

\begin{cor} \label{cor:sublead}
Let $(Y,A)$ be a weakly polarized surface that is either smooth or toric. If $A$ is an $\rz$-divisor then the error terms
$$\ealg_k(Y,A):=\calg_k(Y,A)-\sqrt{2\on{vol}(A)k}$$
are $O(1)$ and nonconvergent with
$$\liminf_{k\to\infty}\ealg_k(Y,A)=\frac{1}{2}K_Y\cdot A\text{ and }\limsup_{k\to\infty}\ealg_k(Y,A)=\frac{1}{2}K_Y\cdot A+\on{gap}(Y,A)$$
where $\on{gap}(Y,A)>0$ is defined in \cite[Def.~4.8]{wor_alg_22}.
\end{cor}

\subsection{Homological formula via Zariski decomposition} We end this section by applying the Zariski decompoisition to slightly generalising a reformulation of algebraic capacities using pseudo-effective $\R$-divisors appearing in \cite{cw_ech_20,wor_alg_22}. We do not require this result elsewhere in the paper.

\vspace{3pt}

\begin{prop} \label{prop:eff_nef} If $(Y,A)$ is a pseudo-polarized $\Q$-factorial surface then
$$\inf_{D\in\NE(Y)_\R}\{A\cdot D:h^0(D)\geq k+1\}=\inf_{D\in\on{Nef}(Y)_\Z}\{A\cdot D:h^0(D)\geq k+1\}$$
\end{prop}

Prop.~\ref{prop:eff_nef} shows that the homological version of the algebraic capacities can be evaluated by considering either nef $\Z$-divisors or pseudo-effective $\R$-divisors. In the toric case or the weak del Pezzo case \cite[Cor.~2.5]{wor_alg_22}, we can conclude the same for algebraic capacities.

\begin{proof} Since both sides are invariant under blowup and pullback, we may assume that $Y$ is smooth. The $\leq$ inequality is clear. Let $D$ be a pseudo-effective $\R$-divisor and let $D=P+N$ be its Zariski decomposition. Since $N$ is pseudo-effective we have $A\cdot D\geq A\cdot P$. Since $h^0(P)=h^0(D)\geq k+1$ we have
$$A\cdot D\geq A\cdot P\geq\inf_{D\in\on{Nef}(Y)_\R}\{A\cdot D:h^0(D)\geq k+1\}$$
and so
$$\inf_{D\in\NE(Y)_{\rz}}\{A\cdot D:h^0(D)\geq k+1\}\geq\inf_{D\in\on{Nef}(Y)_\R}\{A\cdot D:h^0(D)\geq k+1\}$$
Let $P$ be a nef $\R$-divisor. Note that
$$A\cdot\lf P\rf\leq A\cdot P\qquad\text{and}\qquad h^0(\lf P\rf)=h^0(P)$$
and so $\lf P\rf$ is a candidate $\Z$-divisor to evidence the result of the proposition. However, $\lf P\rf$ may not be nef. The solution is to take the Zariski decomposition of $\lf P\rf$ and iterate. More precisely, let $P_0$ be a nef $\R$-divisor and let the Zariski decomposition of $\lf P_0\rf$ be $P_1+N_1$. If $N_1=0$ then $\lf P_0\rf$ is nef and we are done. If not, continue by setting
$$\lf P_i\rf=P_{i+1}+N_{i+1}$$
Note that
$$A\cdot P_i\geq A\cdot\lf P_i\rf\geq A\cdot P_{i+1}\qquad\text{and}\qquad h^0(P_i)=h^0(\lf P_i\rf)=h^0(P_{i+1})$$
so that $P_{i+1}$ is preferable to $P_i$ from the perspective of the optimisation problems in the proposition. Let $P_i=\sum_pa_p^iD_p$ so that $a_p^{i+1}\leq\lf a_p^i\rf$. By property 3.~of Zariski decomposition in \S\ref{sec:zariski} each $a_p^i$ is a decreasing sequence that is bounded below (by zero) and thus converges. By construction the limit must be an integer and so the divisors $P_i$ converge to a $\Z$-divisor. Since the nef cone is closed this limiting divisor is a nef $\Z$-divisor and the result is proven.
\end{proof}

\begin{remark} Note that $h^0(mP_i)\not= h^0(mP_{i+1})$ for $m>1$ in general due to taking the round-down. Similar results were used in \cite{cw_ech_20,wor_alg_22,cw_lat_21} to relate ECH capacities to algebraic capacities by connecting Seiberg--Witten or Gromov--Witten invariants to nefness. We anticipate that arguments of this type will be useful in relating other symplectic invariants to birational qualities of divisors. \end{remark}

\section{Examples and applications} \label{sec:app} In this section, we provide several families of examples that our main results on algebraic capacities (Thm.~\ref{thm:alg_ech}) and symplectic embeddings (Cor.~\ref{cor:conc_emb}) apply to. In particular, we compute Newton--Okounkov bodies with suitable weight sequences for these examples.

\begin{remark} All Newton--Okounkov bodies considered from this point will be automatically $A$-generic and so we omit this from the discussion for brevity. \end{remark}

\subsection{del Pezzo 5} \label{sec:dp5} As a first example we consider the del Pezzo surface $Y$ of degree $5$ polarized by its anticanonical divisor $A=-K_Y$. This the highest degree non-toric del Pezzo surface, obtained by blowing up $\pr^2$ in four general points. Consider the flag $Y_\bullet = \{y\in Y_1\subseteq Y\}$ where $Y_1$ is a $(-1)$-curve and $y$ is a general point on $Y_1$. 

\begin{claim} \label{cl:NO_of_dP5} The Newton--Okounkov body $\Delta=\Delta(Y,A,Y_\bullet)$ is as depicted in Fig.~\ref{fig:no_dp5}.
\end{claim}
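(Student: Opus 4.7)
The plan is to apply the Lazarsfeld--Musta\c{t}\u{a} description of two-dimensional Newton--Okounkov bodies (Thm.~6.4 of \cite{lm_con_09}, recalled in \S\ref{sec:chamber}): the body $\Delta(Y,A,Y_\bullet)$ is the region $\{(t,h) : 0\leq t\leq \mu,\ \alpha_{Y_\bullet}(A,t)\leq h\leq\beta_{Y_\bullet}(A,t)\}$, where the Zariski decomposition $A_t := A - tY_1 = P_t+N_t$ determines $\mu = \sup\{t : A_t \text{ big}\}$, the upper boundary $\beta_{Y_\bullet}(A,t) = \alpha_{Y_\bullet}(A,t) + P_t\cdot Y_1$, and the lower boundary $\alpha_{Y_\bullet}(A,t) = \on{ord}_y(N_t|_{Y_1})$ (provided $Y_1 \not\subseteq \on{Supp}(N_t)$). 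By the $S_4$-symmetry of $Y = \on{dP}_5$, I may take $Y_1 = E_1$, the exceptional divisor over one of the four blown-up points.

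First I would tabulate the intersection of $A_t = 3H - (1+t)E_1 - E_2 - E_3 - E_4$ against each of the ten $(-1)$-curves on $Y$, namely $E_1, E_2, E_3, E_4$ and the six lines $H - E_i - E_j$. All ten pairings are nonnegative on $[0,1]$, so $N_t = 0$ and $\beta(t) = A_t \cdot E_1 = 1 + t$ there. At $t = 1$ the three lines $L_j := H - E_1 - E_j$ for $j = 2,3,4$ simultaneously pair to zero with $A_t$ and become negative for $t > 1$, identifying $t = 1$ as the first critical value.

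For $t \in [1,\mu]$ I would conjecture $N_t = (t-1)(L_2 + L_3 + L_4)$, guided by the orthogonality $P_t \cdot L_j = 0$ and the $S_3$-symmetry permuting the other three blow-up points. A direct calculation then gives
\[P_t = (6-3t)H - (4-2t)E_1 - (2-t)(E_2+E_3+E_4),\]
and I would verify nefness of $P_t$ by pairing against every $(-1)$-curve: $L_2, L_3, L_4$ give $0$, $E_1$ gives $4-2t$, and each of $E_2, E_3, E_4, H-E_2-E_3, H-E_2-E_4, H-E_3-E_4$ gives $2-t$, all nonnegative on $[1,2]$. The upper bound $\mu$ follows from $P_t^2 = 2(2-t)^2$, giving $\mu = 2$.

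Assembling the pieces, $\beta(t) = P_t \cdot E_1$ equals $1+t$ on $[0,1]$ and $4-2t$ on $[1,2]$. The three curves $L_j$ meet $E_1 \simeq \pr^1$ transversally at three distinct points (corresponding to the directions from the first blow-up point toward the other three), so a generic $y \in E_1$ avoids $\on{Supp}(N_t|_{E_1})$ and $\alpha(t) \equiv 0$. This produces the quadrilateral with vertices $(0,0), (0,1), (1,2), (2,0)$ claimed in Fig.~\ref{fig:no_dp5}. The hard part will be the explicit verification of the Zariski decomposition on $[1,2]$: one must correctly identify all negative components using symmetry and check nefness of the candidate $P_t$ against every $(-1)$-curve, since overlooking a single curve would introduce an extra corner in $\beta$ and alter the shape of $\Delta$.
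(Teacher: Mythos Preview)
Your proposal is correct and follows essentially the same approach as the paper: apply the Lazarsfeld--Musta\c{t}\u{a} description and compute the Zariski decomposition of $A_t$ explicitly by testing against all ten $(-1)$-curves, identifying a single chamber wall at $t=1$ and verifying $\mu=2$. The only cosmetic difference is your choice $Y_1=E_1$ (an exceptional divisor) versus the paper's $Y_1=D_1$ (a line through two blow-up points); since the Weyl group $S_5$ of $\on{dP}_5$ acts transitively on $(-1)$-curves, the two computations are parallel, with your triple $L_2,L_3,L_4$ playing the role of the paper's $E_1,D_2,D_5$ in the negative part.
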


\noindent This has weight sequence $(3;1,1,1,1)$ as computed in Fig.~\ref{fig:wt_dp5}, which coincides with the algebraic weight sequence of the natural tower presentation of $(Y,A)$. Therefore Cor.~\ref{cor:conc_emb} implies that the algebraic capacities of $(Y,-K_Y)$ sharply obstruct embeddings of concave toric domains into $(Y,A)$.

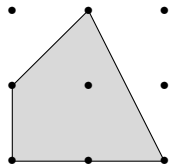
\begin{figure}[h]
    \caption{Newton--Okounkov body for del Pezzo $5$}
    \label{fig:no_dp5}
    \centering
    \begin{tikzpicture}
    \draw[fill=gray!30] (0,0) to (0,1) to (1,2) to (2,0) to (0,0);
    \foreach \i in {0,1,2}
    {
    \foreach \j in {0,1,2}
    \node (\i\j) at (\i,\j){\tiny$\bullet$};
    }
    \end{tikzpicture}
\end{figure}

\begin{figure}[h]
    \caption{Weight sequence decomposition for $\Delta$}
    \label{fig:wt_dp5}
    \centering
    \begin{tikzpicture}
    
    \draw[fill=gray!60] (0,1) to (0,3) to (1,2) to (0,1);
    \draw[fill=gray!80] (1,2) to (3,0) to (2,0) to (1,2);
    \draw (2,0) to (3,0);
    \draw (0,1) to (0,3);
    
    \draw[fill=gray!30] (0,0) to (0,1) to (1,2) to (2,0) to (0,0);
    
    \draw[fill=gray!60] (5,0) to (5,1) to (7,0) to (5,0);
    
    \draw[fill=gray!80] (9,0) to (9,2) to (10,0) to (9,0);
    
    \draw[dashed, color=white, line width=0.8pt] (5,1) to (6,0);
    
    \draw[dashed, color=white, line width=0.8pt] (9,1) to (10,0);
    
    \foreach \i in {0,1,2,3}
    {
    \foreach \j in {0,1,2,3}
    \node (\i\j) at (\i,\j){\tiny$\bullet$};
    }
    
    \foreach \i in {0,1,2}
    {
    \foreach \j in {0,1,2}
    {
    \node (\i\j) at (5+\i,\j){\tiny$\bullet$};
    \node (\i\j) at (9+\i,\j){\tiny$\bullet$};
    }
    }
    \end{tikzpicture}
\end{figure}
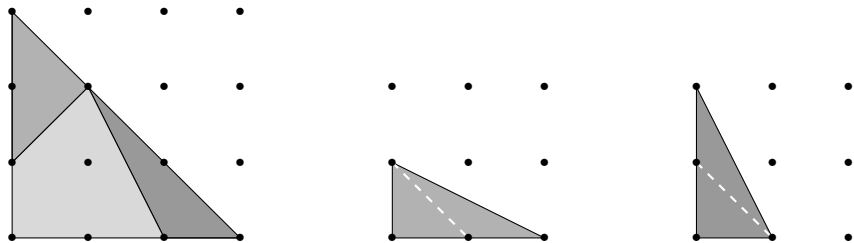

\begin{proof}[Proof of Claim \ref{cl:NO_of_dP5}] We compute $\Delta$ explicitly by analyzing Zariski chamber crossings. Note that the effective cone of a del Pezzo surface is generated by its $(-1)$-curves hence we need only test against these curves to verify nefness. We follow the notation of \cite[\S3.1]{duc_the_21} for $(-1)$-curves on $Y$ as shown in Fig.~\ref{fig:dp_pic}. In particular, we denote by $E_1,E_2,D_3,D_5$ the exceptional curves on $Y$. There are then two $5$-cycles of $(-1)$-curves $D_1+\dots+D_5$ and $E_1+\dots+E_5$, each in the class $-K_Y$.

\begin{figure}[h]
    \caption{$(-1)$-curves on del Pezzo $5$}
    \label{fig:dp_pic}
    \centering
    \begin{tikzpicture}[scale=1.6]
    \draw (0,-0.5) to (0,2.5);
    \draw (-0.5,0) to (2.5,0);
    \draw (-0.5,2.5) to (2.5,-0.5);
    \draw (-0.5,1.5) to (1.5,-0.5);
    
    \draw (-0.5,1.25) to (2.5,-0.25);
    \draw (1.25,-0.5) to (-0.25,2.5);
    
    \node[fill=white] (a) at (0,2){\small$D_5$};
    \node[fill=white] (b) at (2,0){\small$D_3$};
    \node[fill=white] (c) at (0,1){\small$E_1$};
    \node[fill=white] (d) at (1,0){\small$E_2$};
    
    \node[fill=white] (d1) at (0,-0.5){\small$D_1$};
    \node[fill=white] (d2) at (-0.5,0){\small$D_2$};
    \node[fill=white] (d4) at (1,1){\small$D_4$};
    \node[fill=white] (e3) at (1.2,0.4){\small $E_3$};
    \node[fill=white] (e5) at (0.4,1.2){\small $E_5$};
    \node[fill=white] (e4) at (0.45,0.45){\small $E_4$};
    \end{tikzpicture}
\end{figure}
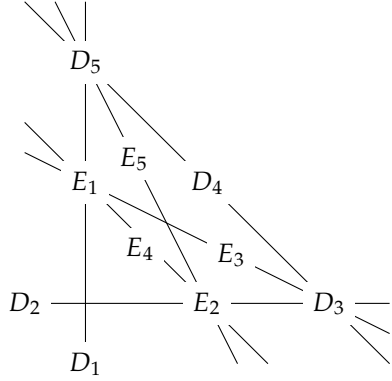

We denote by $\pi\colon Y\to \pr^2$ the blowup in the four points corresponding to $E_1,E_2,D_3,D_5$ in Fig.~\ref{fig:dp_pic}. Pick $Y_1=D_1$ for concreteness. The divisor $A_t$ whose journey through the Zariski chamber decomposition determines $\Delta$ is given in the basis $H=\pi^*\mO_{\pr^2}(1),E_1,E_2,D_3,D_5$ by
$$A_t=3H-E_1-E_2-D_3-D_5-tD_1=(3-t)H-(1-t)E_1-E_2-D_3-(1-t)D_5$$
This is nef for $0\leq t\leq 1$; indeed, it is apparent that $A_t\cdot E_1<0$ when $t>1$. Hence, $P_t=A_t$ and $N_t=0$ for $0\leq t\leq 1$. Set $t=1+s$. We can write
\begin{align*}
    A_t=P_t+N_t \quad\text{where}\quad P_t&=(2-2s)H-(1-s)E_2 -(1-s)D_3 \\
    N_t&=sH+sE_1-sE_2-sD_3+sD_5
\end{align*}
and can verify that $P_t$ is nef and $N_t$ is effective for $1\leq t\leq 2$ or equivalently $0\leq s\leq 1$. Indeed, $N_t=s(E_1+D_2+D_5)$ and $P_t\cdot H<0$ for $t>2$. Further, $P_t\cdot N_t=0$ giving that this is the Zariski decomposition for $A_t$ in this interval. As indicated by the intersection with $H$, for $t>2$ we find that $A_t$ is no longer big and so by \cite[Thm.~6.4]{lm_con_09} the functions $\alpha_{Y_\bullet}(A,\cdot)$ and $\beta_{Y_\bullet}(A,\cdot)$ carving out $\Delta$ are supported on $[0,2]$. In this case the lower boundary $\alpha_{Y_\bullet}(A,\cdot)\equiv0$ since $N_0=0$ and $N_t$ never acquires new components meeting $y\in D_1$. The top boundary is given by
$$\beta_{Y_\bullet}(A,t)=P_t\cdot D_1=\begin{cases}
1+t & 0\leq t\leq 1 \\
4-2t & 1\leq t\leq 2
\end{cases}$$
producing the polygon in Fig.~\ref{fig:no_dp5}.  \end{proof}

\subsection{del Pezzo surfaces} \label{sec:higher_dp} As a more sophisticated example we consider a polarized del Pezzo surface $(Y,A)$ presented as a tower $(\mathcal{Y},\mathcal{A})$ as in (\ref{eqn:tower}) with weight sequence $(c;a_1,\dots,a_n)$. We fix a flag $Y_\bullet$ of the following form:
\[Y_\bullet = y \in E_\star \subset Y\]
where $E_\star$ is an exceptional curve from some $\pi_\star$ on $Y$ and $y \in E_\star$ is an arbitrary point. Let $\Delta$ be the associated Newton--Okounkov body $\Delta=\Delta(Y,A,Y_\bullet)$.

\begin{prop} \label{prop:example} Let $r$ be the Picard rank of $Y$ and suppose that any of the following criteria holds.
\begin{enumerate}[label=(\arabic*)]
    \item $r \le 5$
    \vspace{3pt}
    \item $r \le 7$ and $c-\sum_{j=1}^4 a_{i_j}\geq 0$ for all distinct $i_1,\dots,i_4$.
    \vspace{3pt}
    \item $r \le 8$ and $c-\sum_{j=1}^6 a_{i_j}\geq0$ for all distinct $i_1,\dots,i_6$.
    \vspace{3pt}
    \item $r \le 9$ and in addition to the inequalities from (3) we have $3c-2\sum_{j=1}^7 a_{i_j} \geq0$ for all distinct $i_1,\dots,i_7$.
    \vspace{3pt}
\end{enumerate}
Then the Newton--Okounkov body $\Delta$ associated to the flag $Y_\bullet$ has weight sequence equal to the algebraic weight sequence of the tower ($\mathcal{Y},\mathcal{A})$.
\end{prop}

\noindent Each of the inequalities in (2)-(4) is quantifying that $c$ is appropriately large relative to $a_1,\dots,a_n$.

\begin{proof}
We proceed by induction on the Picard rank. For the base case, if $Y$ has Picard rank at most $4$ then $Y$ is toric, hence each $(-1)$-curve is torus-invariant and so by \cite[\S6.1]{lm_con_09} the Newton--Okounkov body $\Delta$ for the flag $Y_\bullet$ is the moment polytope of $(Y,A)$. In particular, it is polytopal.

\vspace{3pt}

In general, suppose the statement is true for polarized del Pezzo surfaces of Picard rank $n$ and suppose $(Y,A)$ is a polarized del Pezzo surface of Picard rank $n+1$. Without loss of generality assume that $E_\star$ is not the exceptional curve from the final blowup in the tower and consider the blowup map $\pi_n\colon Y\to Y_{n-1}$ from contracting the final exceptional curve $E_n$. Note that $Y_{n-1}$ is a del Pezzo surface of Picard rank $n$ and that $A=\pi_n^*A_{n-1}-a_n E_n$ for some ample divisor $A_{n-1}$ on $Y_{n-1}$ so that $(Y_{n-1},A_{n-1})$ is another polarized del Pezzo surface.

\vspace{3pt}

$E_\star$ can be viewed as a $(-1)$-curve in $Y_{n-1}$ and, further, the inequalities holding between $c,a_1,\dots,a_n$ imply the analogous inequalities hold among $c,a_1,\dots,a_{n-1}$. Thus the Newton--Okounkov body $\Delta(Y_{n-1},A_{n-1},\ol{Y}_\bullet)$ has the same weight sequence $(c;a_1,\dots,a_{n-1})$ as $(Y_{n-1},A_{n-1})$ where $\ol{Y}_\bullet$ is the flag $\ol{y}\in E_\star\subseteq\ol{Y}$.

\vspace{3pt}

We will show in each case that the Newton--Okounkov body associated to $A$ with respect to the flag $Y_\bullet:y\in E_0\subseteq Y$ is obtained from $\Delta(Y_{n-1},A_{n-1},\ol{Y}_\bullet)$ by slicing with a hyperplane in such a way that a \emph{regular triangle} (i.e. the $\on{SL}(2,\Z)$ image of a right, isosceles triangle) of height (and width) $a_n$ is removed. This will show that the weight sequence of $\Delta(Y,A,Y_\bullet)$ is obtained from the weight sequence of $\Delta(Y_{n-1},A_{n-1},\ol{Y}_\bullet)$ by concatenating with $\{a_n\}$, which is by construction how one obtains the weight sequence of $(Y,A)$ from the weight sequence of $(Y_{n-1},A_{n-1})$.

\vspace{3pt}

From Ex.~\ref{ex:-1_curves} we can explicitly describe the Zariski chamber decomposition for $\on{Big}(Y)$: in this case the Zariski chambers are carved out by the hyperplane arrangement
$$\{C^\perp\}_{C^2<0}$$
As in \S\ref{sec:chamber} we write $(t,h)$ for points of Newton--Okounkov bodies in $\R^2$. Recall that the functions defining the upper and lower boundaries of $\Delta(Y,A,Y_\bullet)$ have breaks exactly when $A_t=A-tE_\star$ crosses into a new Zariski chamber; in other words, when $A_t\cdot C=0$ for some $(-1)$-curve $C$ on $Y$. Write $p\colon Y\to\pr^2$ for the total blowup map, which factors through a map $\ol{p}\colon Y_{n-1}\to\pr^2$. Write $H=p^*\mO_{\pr^2}(1)$. We compare the $(-1)$-curves on $Y$ to those on $Y_{n-1}$. They split into four classes:
\begin{enumerate}
    \item $C=\pi_n^*\ol{C}$ for some $(-1)$-curve $\ol{C}\subseteq Y_{n-1}$,
    \item $C=E_n$,
    \item $C=H-E_i-E_n$,
    \item $C$ is the strict transform of a curve of degree $>1$ in $\pr^2$ passing through the point $p(E_n)$.
\end{enumerate}
Note that the centre of the blowup $\pi\colon Y\to Y_{n-1}$ cannot be on any $(-1)$-curve and so all $(-1)$-curves in $Y_{n-1}$ lift to $(-1)$-curves in $Y$. The main work of the proof is contained in the following.

\begin{claim} \label{claim:aux}
If the weight sequence of $(Y,A)$ satisfies one of the conditions (1)-(4), then the only $(-1)$-curves $C$ inducing a wall $C^\perp$ that $A_t$ meets as $t$ varies are those from (i) and $C=H-E_\star-E_n$.
\end{claim}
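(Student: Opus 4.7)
The strategy is to enumerate the four classes of $(-1)$-curves in the statement using the classical classification of $(-1)$-curves on del Pezzo surfaces, and for each such $C$ compute $A_t \cdot C = A \cdot C - t \cdot (E_\star \cdot C)$ explicitly in the basis $\{H, E_1, \dots, E_n\}$. A wall $C^\perp$ can be crossed as $t$ grows from $0$ only when $e(C) := E_\star \cdot C > 0$, in which case the crossing occurs at $t_C = (A \cdot C)/e(C)$. Write $t_\star := c - a_\star - a_n$ for the value at which the wall $(H - E_\star - E_n)^\perp$ is crossed; as a warm-up one checks that $A_t$ ceases to be big shortly after $t_\star$, so any wall with $t_C > t_\star$ is irrelevant to $\Delta$.

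Classes (ii) and (iii) are handled at once. For $C = E_n$ one has $A \cdot E_n = a_n > 0$ and $e(C) = 0$, so no wall. For $C = H - E_i - E_n$ with $i \neq \star$, again $e(C) = 0$ and $A \cdot C = c - a_i - a_n > 0$ by the ampleness of $A$; and for $i = \star$ we get exactly the new wall $H - E_\star - E_n$ at $t_\star$. So the claim reduces to excluding class (iv).

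For class (iv), the list of relevant curves is dictated by the Picard rank: in case (2) one must consider $C = 2H - E_n - E_{i_1} - \cdots - E_{i_4}$; case (3) adds $3H - 2E_i - \sum_{j=1}^{6} E_{i_j}$ with $n$ among the indices; and case (4) adds the $4H$-, $5H$- and $6H$-classes appearing on the degree-one del Pezzo. In each such subcase we split on whether $E_\star$ appears with negative coefficient in $C$. When it does not, $e(C) = 0$ and $A_t \cdot C$ is the constant $A \cdot C$; positivity here follows from adding two instances of the hypothesis (e.g.\ for the conic case, $c \geq a_{i_1} + a_{i_2} + a_{i_3} + a_{i_4}$ and $c \geq a_n + a_{i_1} + a_{i_2} + a_{i_3}$ sum to give $2c - a_n - \sum a_{i_j} > 0$). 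When $E_\star$ does appear, with multiplicity $e \in \{1, 2\}$, we compute $t_C$ and verify $t_C > t_\star$ using a linear inequality in the weight sequence; for instance, $C = 2H - E_\star - E_n - E_{i_1} - E_{i_2} - E_{i_3}$ gives $t_C - t_\star = c - a_{i_1} - a_{i_2} - a_{i_3}$, which is nonnegative by (2) applied to any $4$-tuple extending $\{i_1, i_2, i_3\}$.

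The main obstacle is the bookkeeping in case (4): the degree-one del Pezzo has many more classes of $(-1)$-curves (including ones where $E_\star$ or $E_n$ appears with multiplicity $2$), and the separate inequality $3c \geq 2\sum_{j=1}^7 a_{i_j}$ in hypothesis (4) is tailored precisely so that all the resulting linear inequalities $t_C \geq t_\star$ hold. The rest of the argument is then uniform: once class (iv) is excluded, only curves from (i) and the single curve $H - E_\star - E_n$ can induce walls that $A_t$ crosses, so the NO body $\Delta(Y, A, Y_\bullet)$ is obtained from $\Delta(Y_{n-1}, A_{n-1}, \bar Y_\bullet)$ by slicing off a regular triangle of side $a_n$ as required for the inductive step.
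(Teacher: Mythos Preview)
Your overall structure matches the paper's approach closely: enumerate the $(-1)$-curve types, discard those with no $E_\star$ component since then $A_t\cdot C=A\cdot C>0$, and for the remaining curves compare the crossing time $t_C$ to the point where the ray $A_t$ exits the big cone. However, there is a genuine gap in your threshold.

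You assert that ``$A_t$ ceases to be big shortly after $t_\star$, so any wall with $t_C>t_\star$ is irrelevant.'' This is not correct. The ray $A_t$ remains big until $\mu=A\cdot(H-E_\star)=c-a_\star$, which is $a_n$ \emph{later} than $t_\star=c-a_\star-a_n$. A wall with $t_\star<t_C<\mu$ would still be crossed inside $\on{Big}(Y)$ and would introduce an extra break in $\beta_{Y_\bullet}$, spoiling the ``slice off one triangle'' conclusion. The paper therefore proves the stronger inequality $t_C\geq\mu$ for every curve in (iv$'$). In your conic example $C=2H-E_\star-E_n-E_{i_1}-E_{i_2}-E_{i_3}$ you show $t_C-t_\star=c-a_{i_1}-a_{i_2}-a_{i_3}\geq0$, but what is actually needed is $t_C-\mu=c-a_n-a_{i_1}-a_{i_2}-a_{i_3}\geq0$; this is exactly hypothesis (2) applied to the four indices $\{n,i_1,i_2,i_3\}$, and similarly the extra $-a_n$ is why conditions (3) and (4) involve six and seven indices respectively. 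The fix is simply to compare everything to $\mu$ rather than $t_\star$; the hypotheses are calibrated for that comparison.

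A minor point: your treatment of the $e(C)=0$ subcase of (iv) by summing two instances of the hypothesis is unnecessary. Since $A$ is ample and $C$ is an effective $(-1)$-curve, $A\cdot C>0$ is automatic; the paper disposes of all curves without an $E_\star$ component in one line this way.
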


For clarity we write $\ol{A}=A_{n-1}$ and $\ol{A}_t=A_{n-1}-tE_\star$. Note for a curve $C=\pi^*\ol{C}$ in (i) we have
$$A_t\cdot C=(\pi_n^*\ol{A}-a_n E-tE_\star)\cdot\pi_n^*E_0=\pi_n^*(\ol{A}-tE_\star)\cdot\pi_n^*\ol{C}=\ol{A}_t\cdot \ol{C}$$
Hence any wall $\ol{C}_i^\perp$ that $\ol{A}_t$ hits at $t=t_i$ also defines a wall $C_i^\perp$ that $A_t$ hits at $t=t_i$. This proves the part of Claim \ref{claim:aux} regarding curves in (i).

\vspace{3pt}

We claim that any curve $C$ defining a wall that $A_t$ meets must be supported on $E_\star$ when written in the basis $H,E_1,\dots,E_n$ of $\on{Pic}(Y)$. Indeed, if $C$ has no $E_\star$ component then $A_t\cdot C=A\cdot C>0$. Thus, the remaining relevant curves fall into the subclasses:
\begin{enumerate}
    \item[(iii')] $C=H-E_\star-E_n$,
    \item[(iv')] $C$ is the strict transform of a curve of degree $>1$ in $\pr^2$ passing through the points $p(E)$ and $p(E_\star)$.
\end{enumerate}

The curve $C=H-E_\star-E_n$ in (iii') does describe a wall that $A_t$ passes through since
$$A_t\cdot C=(\pi^*\ol{A}-a_n E_n-tE_\star)\cdot(H-E_\star-E_n)=\ol{A}\cdot(H-E_\star)-a_n-t$$
and so $A_t$ meets this new wall at $t=t_\star:=\ol{A}\cdot(H-E_\star)-a_n$. Indeed, in this case we find that the upper bound $\mu$ for $t$ is given by $\ol{A}\cdot(H-E_\star)=A\cdot(H-E_\star)$ for both $\Delta(Y_{n-1},\ol{A},\ol{Y}_\bullet)$ and $\Delta(Y,A,Y_\bullet)$. This follows since $H-E_\star$ is a $0$-curve on $Y_{n-1}$ and $Y$, and when $A_t\cdot(H-E_\star)=0$ we see that $A_t$ has met the corresponding face of the effective cone. We see then that $t_\star=\mu-a_n$ and thus defines a new vertex of $\Delta(Y,A,Y_\bullet)$ compared to $\Delta(Y_{n-1},\ol{A},\ol{Y}_\bullet)$.

\vspace{3pt}

It remains to show that in each of the relevant cases all curves $C$ in (iv') define a wall that $A_t$ does not meet inside $\on{Big}(Y)$. We write
$$C=dH-\sum_{i=1}^nb_iE_i$$
where $d^2-\sum_{i=1}^nb_i^2=-1$ and $b_\star,b_n>0$. Compute
$$A_t\cdot C=A\cdot(dH-\sum_{i=1}^nb_iE_i)-b_\star t$$
so that if $A_t$ did meet $C^\perp$ it would be when
$$t=t_\diamond:=\frac{1}{b_\star}A\cdot(dH-\sum_{i=1}^nb_iE_i)=\frac{1}{b_\star}(cd-\sum_{i=1}^na_ib_i)$$
We show that $A_{t_\diamond}$ is not big, equivalently $t_\diamond\geq\mu$, and so $A_t$ does not meet $C^\perp$ inside $\on{Big}(Y)$.

\begin{enumerate}[label=(\arabic*)]
    \item When the Picard rank of $Y$ is $5$ there are no $(-1)$-curves on $Y$ of the type (iv'). Hence the proof of this case is complete.
    \item When the Picard rank of $Y$ is $6$ or $7$ the only $(-1)$-curves of the type (iv') are strict transforms of conics passing through five of the blowup centres. The classes of these curves take the form
    $$2H-E_{i_1}-E_{i_2}-E_{i_3}-E_{i_4}-E_{i_5}$$
    From our previous observations it is sufficient to consider the curves
    $$2H-E_\star-E_n-E_i-E_j-E_k$$
    where $b_\star=1$. We find that $t_\diamond\geq\mu$ precisely when
    $$t_\diamond-\mu=A\cdot(C-H+E_\star)=c-a_n-a_i-a_j-a_k\geq0$$
    as required in the statement of Prop.~\ref{prop:example}.
    \item When the Picard rank of $Y$ is $8$ we obtain a further class of $(-1)$-curves from the strict transform of cubic curves in $\pr^2$ with a double point at one the blowup centres and passing through six others. The class of such a curve is
    $$3H-2E_{i_1}-E_{i_2}-\dots-E_{i_7}$$
    and again we only need consider such curves passing through $p(E_\star)$ and $p(E_n)$. There are two cases to consider: either $b_\star=1$ or $b_\star=2$. If $b_\star=2$, namely the double point of the cubic is at $p(E_\star)$, we have
    $$2(t_\diamond-\mu)=A\cdot(C-2H+2E_\star)=c-a_{i_1}-\dots-a_{i_6}$$
    which is nonnegative by assumption. This also implies the inequalities in (2) for conics. If $b_\star=1$ then
    $$t_\diamond-\mu=A\cdot(C-H+E_\star)=2c-2a_{i_1}-a_{i_2}-\dots-a_{i_6}$$
    where we choose $i_7=\star$ and this is nonnegative from the inequality in the $b_\star=2$ case and the fact that $c\geq a_{i_1}$.
    \item When the Picard rank of $Y$ is $9$ performing the same analysis with the new $(-1)$-curves coming from strict transforms of certain quartics, quintics, and sextics in $\pr^2$ again yields that $t_\diamond\geq\mu$ when the inequality in the statement of Prop.~\ref{prop:example} holds. In this case the additional inequality comes from the sextic in $\pr^2$ with a triple point at $p(E_\star)$ and a double point at $p(E_i)$ for each $i\not=\star$.
\end{enumerate}

This proves Claim \ref{claim:aux}. We have shown that $\Delta(Y,A,Y_\bullet)$ is obtained from $\Delta(Y_{n-1},\ol{A},\ol{Y}_\bullet)$ by slicing with a half-space whose boundary line passes through the point $(\mu-a_n,\beta_{\ol{Y}_\bullet}(\ol{A},\mu-a_n))$. We conclude the proof by noting that the slice we remove must correspond to removing a regular triangle of side length $a_n$ from $\Delta(Y_{n-1},\ol{A},\ol{Y}_\bullet)$ as the volume of the two polytopes differs by $a_n$ and we have already located an edge of length $a_n$.
\end{proof}

\subsection{Rational surfaces of higher Picard rank} \label{sec:higher_blowups} Next, we turn our attention to rational surfaces obtained by blowing up nine or more points in $\pr^2$.

\vspace{3pt}

Let $(Y,A)$ be a polarized rational surface presented as a tower $(\mathcal{Y},\mathcal{A})$ as in (\ref{eqn:tower}) whose only negative curves are rational $(-1)$-curves. Write $n+2$ for the Picard rank of $Y$ and let $E_\star$ be an exceptional curve on $Y$. Denote the weight sequence of $(\mathcal{Y},\mathcal{A})$ by $(c;a_1,\dots,a_{n+1})$.

\begin{lemma} \label{lem:bi_bound} Let $C$ be a rational $(-1)$-curve on the polarized surface $(Y,A)$ of the form
\[C = dH-\sum_{i=1}^{n+1}b_iE_i\]
Then if $n\geq 3$ and $d\geq 2$ we have
\[b_i\leq\frac{n-2}{n}\cdot d\quad\text{for all $i=1,\dots,n+1$.}\]
\end{lemma}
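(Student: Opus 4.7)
The plan is to combine an integrality bound with the Cauchy--Schwarz inequality via a dichotomy based on the size of $d$ relative to $n/2$. Set $b_1 := \max_i b_i$. Since $C$ is irreducible and distinct from each $E_i$ (as $d \geq 2$), each $b_i = C \cdot E_i$ is a non-negative integer, and adjunction $C \cdot K_Y = -1$ combined with $C^2 = -1$ yields the identities
\[
\sum_i b_i = 3d - 1 \qquad\text{and}\qquad \sum_i b_i^2 = d^2 + 1.
\]

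In the first case, $d \leq n/2$, I use that $b_i^2 \geq b_i$ for $b_i \in \Z_{\geq 0}$: summing over $i \geq 2$ and substituting the identities above gives $b_1^2 - b_1 \leq (d-1)(d-2)$, hence $b_1 \leq d-1$ for $d \geq 2$. The inequality $d - 1 \leq \tfrac{n-2}{n}d$ rearranges precisely to $d \leq n/2$, completing this case.

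In the second case, $d > n/2$ (which forces $n \geq 5$, since for $n \leq 4$ a direct arithmetic check using the two identities shows that all $(-1)$-curves with $d \geq 2$ have $d \leq n/2$), Cauchy--Schwarz applied to $(b_i)_{i \geq 2}$ gives $(3d-1-b_1)^2 \leq n(d^2+1-b_1^2)$, i.e., $Q(b_1) \leq 0$ where $Q(x) := (n+1)x^2 - 2(3d-1)x + (3d-1)^2 - n(d^2+1)$. Plugging $x = \tfrac{n-2}{n}d$ into $Q$ and clearing denominators yields
\[
n^2 Q\!\left(\tfrac{n-2}{n}d\right) = 4(3n+1)d^2 - 4n(n+1)d - n^2(n-1),
\]
a quadratic in $d$ whose discriminant simplifies through the identity $(n+1)^2 + (3n+1)(n-1) = 4n^2$ to show that its positive root is exactly $d = n/2$. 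Hence $Q(\tfrac{n-2}{n}d) \geq 0$ for all $d \geq n/2$. A straightforward vertex check, namely $\tfrac{n-2}{n}d \geq \tfrac{3d-1}{n+1}$ (valid for $n \geq 5$ since the rearrangement $(n^2-4n-2)d + n \geq 0$ holds for all $d \geq 0$), then forces $b_1 \leq \tfrac{n-2}{n}d$, completing this case.

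The main obstacle is the algebraic verification in the second case. The fortuitous identity $(n+1)^2 + (3n+1)(n-1) = 4n^2$ is what makes the Cauchy--Schwarz threshold align precisely with the integrality threshold $d = n/2$, so that the two cases meet seamlessly; without this sharp alignment, a gap would remain uncovered by either bound.
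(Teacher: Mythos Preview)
Your proof is correct, and it arrives at the bound through a genuinely different organization than the paper's argument, though both rest on the same two identities $\sum b_i = 3d-1$, $\sum b_i^2 = d^2+1$ and on Cauchy--Schwarz applied to the $n$ terms remaining after separating out the largest $b_i$.

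The paper avoids any case split: it writes $b_{\max} = d-k$ with $k \geq 1$, observes the algebraic inequality $\sum_{i\neq\max} b_i^2 \leq k\sum_{i\neq\max} b_i$, and combines this with Cauchy--Schwarz to get $2d+k-1 \leq nk$, i.e.\ $d \leq \tfrac{k(n-1)+1}{2}$. Then $b_{\max}/d = 1 - k/d$ is bounded above by $(kn-3k+1)/(kn-k+1)$, which is decreasing in $k$ and hence maximized at $k=1$, yielding $(n-2)/n$ uniformly. Your approach instead splits at the threshold $d = n/2$: for small $d$ the crude bound $b_{\max} \leq d-1$ already suffices, and for large $d$ you analyze the Cauchy--Schwarz quadratic $Q(b_1) \leq 0$ directly, checking that $Q\bigl(\tfrac{n-2}{n}d\bigr) \geq 0$ exactly when $d \geq n/2$ and that $\tfrac{n-2}{n}d$ lies past the vertex. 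The paper's route is shorter and more uniform; yours is more computational but makes transparent \emph{why} the bound is sharp, via the coincidence $(n+1)^2 + (3n+1)(n-1) = 4n^2$ that places the Cauchy--Schwarz threshold precisely at the integrality threshold $d = n/2$.
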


\begin{proof} After reordering we may assume that $b_{n+1}$ is the largest $b_i$. We note that
$$\sum_{i=1}^{n+1}b_i^2=d^2+1\qquad\sum_{i=1}^{n+1}b_i=3d-1$$
 Clearly $b_{n+1}\leq d-1$ since $d\geq 2$ and so write $b_{n+1}=d-k$ for some $k\geq 1$. We then have
$$\sum_{i=1}^nb_i^2=2kd-k^2+1\qquad\sum_{i=1}^nb_i=2d+k-1 \qquad\text{and thus}\qquad \sum_{i=1}^nb_i^2\leq k\sum_{i=1}^nb_i$$
By applying the Cauchy--Schwarz inequality to the latter bound, we find that
$$\left(\sum_{i=1}^nb_i\right)^2\leq\sum_{i=1}^nb_i^2\cdot\sum_{i=1}^n1=n\sum_{i=1}^nb_i^2$$
Thus we get the following bound relating $d,k$ and $n$. 
$$2d + k - 1 = \sum_{i=1}^nb_i\leq n\cdot\frac{\sum_{i=1}^n b_i^2}{\sum_{i=1}^nb_i}\leq kn \qquad\text{or equivalently}\qquad d\leq\frac{kn-k+1}{2}$$
Since $\frac{d-k}{d}$ is an increasing sequence in $d$, this gives us the following bound.
\begin{equation} \tag{$\ast$} \label{eqn:d_bound}
\frac{b_i}{d}\leq\frac{b_{n+1}}{d}=\frac{d-k}{d}\leq\frac{kn-3k+1}{kn-k+1}
\end{equation}
The upper bound in (\ref{eqn:d_bound}) is decreasing in $k$ and so the smallest value $k=1$ is optimal.
\end{proof}

\begin{prop} \label{prop:higher} Let $(Y,A)$ be the polarized rational surface above and suppose that its weight sequence satisfies
\begin{equation} \tag{$\dagger$} \label{eqn:higher_condition}
    c\geq\frac{n-2}{2}\sum_{i=1}^{n+1}a_i
\end{equation}
Let $Y_\bullet$ be the flag $\{y\in E_\star\subseteq Y\}$ for an arbitrary point $y\in E_\star$. Then the Newton--Okounkov body $\Delta(Y,A,Y_\bullet)$ has weight sequence equal to the weight sequence of $(\mathcal{Y},\mathcal{A})$.
\end{prop}

\begin{proof}
We essentially repeat the argument from Prop.~\ref{prop:example}. Note that, as for del Pezzo surfaces, the Zariski chambers for $Y$ are carved out by the hyperplanes
$$\{C^\perp\}_{C^2<0}$$
in this setting \cite[Prop.~3.4]{bks_zar_04}. We again study a blowdown $(\ol{Y},\ol{A})$ of $(Y,A)$ obtained by contracting an exceptional curve other than $E_\star$; equivalently, omitting the corresponding term from the weight sequence. Observe that if $(Y,A)$ satisfies (\ref{eqn:higher_condition}) then so does $(\ol{Y},\ol{A})$ since the Picard rank of $\ol{Y}$ is $n+1$ and the quantity $\sum_i a_i$ decreases with blowdown. Arguing recursively as before with base case a toric del Pezzo surface leaves us needing to show that every $(-1)$-curve in (iv') defines a wall that does not meet $A_t$ inside the big cone. Fix such a $(-1)$-curve $C$ whose class is notated as in Lem.~\ref{lem:bi_bound} and define $t_\diamond$ and $\mu$ as above. We compute
$$b_\star(t_\diamond-\mu)=A\cdot (C-b_\star H+b_\star E_\star)=(d-b_\star)c-\sum_{i=1}^{n+1}a_ib_i+a_\star b_\star$$
Using the bound from Lem.~\ref{lem:bi_bound} we have
$$b_\star(t_\diamond-\mu)\geq\left(d-\frac{n-2}{n}d\right)c-\sum_{i=1}^{n+1}\frac{n-2}{n}da_i=d\left(\frac{2}{n}c-\frac{n-2}{n}\sum_{i=1}^{n+1}a_i\right)$$
and the right-hand side is nonnegative when (\ref{eqn:higher_condition}) holds.
\end{proof}

\begin{remark} A weaker version of the well-known SHGH Conjecture \cite[Conj.~0.1]{def_neg_04} or \cite[Conj.~2.2.3]{chmr_var_13} states that any tower of very general blowups of $\pr^2$ produces a rational surface whose only negative curves are rational $(-1)$-curves, hence a surface to which Prop.~\ref{prop:higher} applies. We note that, like the Nagata Conjecture, the SHGH Conjecture is still very much open. \end{remark}

\subsection{Pseudo-polarized surfaces from toric degenerations} We conclude this section by discussing how degenerations can produce projective surfaces that are `more generic' with respect to their tower presentations. We use this procedure to manufacture examples of surfaces that Thm.~\ref{thm:alg_ech} and Cor.~\ref{cor:conc_emb} apply to.

\vspace{3pt}

As in \cite[\S2.4]{wor_tow_22}, weight sequences for pseudo-polarized toric surfaces are naturally indexed by the infinite rooted binary tree $\mathcal{T}$ with root vertex $\infty$, controlling the inclusion of centers to previous exceptional curves. We may view $\mathcal{T}$ as a poset with unique maximum $\infty$ and order relation generated by edges. By padding with zeroes we can view all polytopal weight sequences as elements of $\R^\mathcal{T}:=\R^{\on{vert}(\mathcal{T})}$.

\begin{definition} The space of all polytopal weight sequences $\mathcal{P}$ is given by
\[\mathcal{P} := \big\{(c,a_i) \in \R^{\mathcal{T}} \; : \; a_i \in \mathcal{T} \setminus \{\infty\} \text{ and }(c,a_i) = \on{wt}(\mathcal{Y},\mathcal{A}) \text{ for a tower }(\mathcal{Y},\mathcal{A})\big\}\]
The set $\mathcal{P}_\rho \subset \mathcal{P}$ of polytopal weight sequences in $\R^\rho$ is the subset consisting of weight sequences supported on $\rho$ vertices.
\end{definition}

\begin{prop} \label{prop:mp}
The set of polytopal weight sequences $\mathcal{P}$ is a cone in $\mathcal{R}^{\mathcal{T}}$. Moreover,
\begin{itemize}
\item $\mathcal{T}$ is concave with respect to the root component in the sense that
\[(c,a_i)\in\mathcal{P} \implies (c',a_i)\in\mathcal{P} \qquad\text{if}\qquad c'\geq c\]
\item $\mathcal{T}$ is convex with respect to all other components in the sense that
\[(c,a_i)\in\mathcal{P} \text{ and }a_i \neq 0\implies (c,a_i')\in\mathcal{P} \qquad \text{if}\qquad a_i\geq a_i'> a_j \text{ for all }j\leq i\]
\end{itemize}
\end{prop}

One can actually see that $\mathcal{P}$ is a cone directly from the framework of polarized Looijenga towers in \cite{wor_tow_22}. Here we provide a direct proof.

\begin{proof}
Suppose $(c,a_i)$ and $(c',a_i')$ are polytopal weight sequences viewed in $\mathcal{P}$. Let $\Omega$ and $\Omega'$ be polygons corresponding to these weight sequences (or rather to the associated pseudo-polarized toric surfaces). 

\vspace{3pt}

We claim that the sequence $(c+c',a_i+a_i')$ corresponds to the Minkowski sum $\Omega+\Omega'$. This is most easily seen algebro-geometrically. Let $\wt{Y}$ be a common toric blowup of $Y_\Omega$ and $Y_{\Omega'}$ and let $A,A'$ be the pullbacks of $A_\Omega$ and $A_{\Omega'}$ to $\wt{Y}$. From the definition of weight sequence we see that the pair $(\wt{Y},A+A')$ has a tower with weight sequence $(c+c',a_i+a_i')$. From standard toric geometry the polytope associated to $A+A'$ is the Minkowski sum of the polytopes associated to $A$ and $A'$, which are $\Omega$ and $\Omega'$ respectively as pullbacks of $A_\Omega$ and $A_{\Omega'}$. 

\vspace{3pt}

It is clear that scaling a weight sequence corresponds to positively scaling the corresponding polygon and so this immediately shows that $\mathcal{P}$ is a cone. Concavity in the root component follows from noting that if $A$ is the big and nef divisor corresponding to $(c,a_i)$ then $(c+d,a_i)$ corresponds to $A+p_0^*\mO_{\pr^2}(d)$, which is the sum of a big and nef divisor and a nef divisor and hence is big and nef. Convexity in the other components follows similarly from the hyperplane arrangement description of toric divisors \cite{per_div_11}.
\end{proof}

\begin{remark} Note that $\R^\rho$ is not a nice cone. It is easy to construct weight sequences supported on different sets of $\rho$ vertices of $\mathcal{T}$ whose sum is hence supported on more than $\rho$ vertices. \end{remark}

We next provide a more complete but much more implicit description of pseudo-polarized rational surfaces with polytopal weight sequences.

\vspace{3pt}



Consider the flat family $\on{Bl}_Z(\pr^2\times\pr^2)\to\pr^2$ obtained by blowing up the diagonal $Z\subseteq\pr^2$. The fibers of this family are $\pr^2$ blown up once where the center of the blowup varies across all of $
\pr^2$. Given $c,a_1\geq0$ the total space carries a line bundle $\mathscr{A}_1$ restricting to $cH-a_1E_1$ on each fiber where $H$ is the pullback of $\mO_{\pr^2}(1)$ and $E_1$ is the exceptional class. Repeating this process $n$ times yields a flat family $\mathscr{Y}$ whose fibers are all possible $n$ point blowups of $\pr^2$ (or towers of length $n$), most occurring multiple times across the family, equipped with a line bundle $\mathscr{A}$ restricting to $cH-a_1E_1-\dots-a_nE_n$ on fibers as above. A tower $(\mY,\mA)$ of length $n$ with weight sequence $(c;a_1,\dots,a_n)$ is thus exactly a fiber of this family with its restricted line bundle.

\vspace{3pt}

Suppose now that $(\mY,\mA)$ defines a pseudo-polarised rational surface $(Y,A)$ so that the restricted line bundle $A$ is big and nef. It follows from \cite[Cor.~4]{mor_cri_92} or \cite[Ex.~1.4.5]{laz_pos_17} that there is a Zariski open set of fibers containing $(\mY,\mA)$ such that the restricted line bundle on each fiber in this open set is nef. The self-intersection of each of these restricted line bundles is equal to $A^2$ by construction and so is in particular positive. By \cite[Thm.~1.4.35]{dem_van_14} it follows that each restricted line bundle is big and nef and hence each fiber in this open set defines a pseudo-polarised rational surface. In this situation we say that a tower coming from a fiber in such an open set is a \emph{central genericisation} of $(\mY,\mA)$. This name comes from the picture that one is moving the centers of the blowups defining $(\mY,\mA)$ into more general position (e.g.~from blowing up torus-invariant points on a toric surface to points not fixed by the torus action). We summarise this discussion in the following proposition.

\begin{prop} \label{prop:flat}
Suppose $(Y,A)$ is a pseudo-polarized toric surface. If $(Y',A')$ is any central genericisation of $(Y,A)$ then $A'$ is big and nef.
\end{prop}

\begin{example}
We consider the following example to illustrate the importance of $y_i'$ being in more general position than $y_i$. Let $Y$ be the del Pezzo surface of degree $6$, obtained by blowing up $\pr^2$ in three general points. This is a toric surface with $-K_Y$ ample. This surface degenerates to the surface $Y'$ obtained by blowing up $\pr^2$ in three collinear points but $-K_{Y'}$ is not even nef. In this case the general fibre $Y$ has ample anticanonical divisor but the special fibre $Y'$ has less positivity.
\end{example}

The two results of this section combine to give access to many polarized rational surfaces with polytopal weight sequences, hence whose ECH capacities can be combinatorially computed and used to effectively obstruct embeddings. Note that the nef and effective cones of such surfaces can be very complicated -- e.g.~\cite[\S1.5.D]{laz_pos_17} -- and so this is a major simplification. Prop.~\ref{prop:mp} gives a method of producing new polytopal weight sequences from old ones, and Prop.~\ref{prop:flat} implies that any pair $(Y,A)$ admitting such a weight sequence produced by a tower of blowups with sufficiently general centres is indeed (pseudo-)polarized and hence carries the structure of a symplectic $4$-manifold. We still require good Newton--Okounkov bodies in order to get the best embedding obstructions as in Cor.~\ref{cor:conc_emb} but Prop.~\ref{prop:flat} at least gives a large collection of cases where this is potentially true, and Prop.~\ref{prop:example} demonstrates this for many examples.

\subsection{Infinite staircases for non-toric surfaces}

Our final application concerns the complexity of embedding obstructions phrased in terms of `infinite staircases'. It has been seen that the obstructions from ECH capacities can be understood in terms of exceptional curves in blowups of $\pr^2$ \cite{mcd_sym_09,cri_sym_19}. Consider the ellipsoid embedding problem for a given symplectic $4$-manifold $(X,\omega)$:
$$\text{compute $f_X(z):=\inf\{r>0:E(1,z)\se (X,r\omega)\}$ for $z\geq1$}$$
It is often the case that only finitely many obstructions (coming from finitely many exceptional curves) fully describe the structure of $f_X(z)$. In all cases where infinitely many obstructions feature, $f_X(z)$ takes the form of an \emph{infinite staircase} with infinitely many points of nondifferentiability accumulating to a point where the obstruction is prescribed by the trivial volume constraint \cite{chmp_inf_20}. The existence or non-existence of staircases is a significant open problem with some intriguing conjectures such as \cite[Conj.~1.20]{chmp_inf_20} and \cite[Conj.~2.2.4]{mmw_sta_22}.

The technology of Cristofaro-Gardiner--Holm--Mandini--Pires \cite{chmp_inf_20} and Casals--Vianna \cite{cv_sha_20} to study staircases is mostly focused on the (almost) toric case. The results of \S\ref{sec:calc_via_NO_bodies} and \S\ref{sec:higher_dp}-\ref{sec:higher_blowups} allow questions about staircases for non-toric surfaces with suitable Newton--Okounkov bodies to questions about convex toric domains.

\begin{example}
To illustrate this, the existence of the Newton--Okounkov body with weight sequence $(3;1,1,1,1)$ for the del Pezzo surface of degree $5$ polarized by its anticanonical divisor described in \S\ref{sec:dp5} implies that it has an infinite staircase. This follows from the corresponding fact for any convex toric domain with weight sequence $(3;1,1,1,1)$ from \cite[Thm.~1.16]{chmp_inf_20}. This example is already known \cite[Rmk.~1.18]{chmp_inf_20} but it is a straightforward consequence of our machinery.
\end{example}

We present some more novel examples of our work to staircase questions.

\begin{prop} \label{prop:staircase}
Let $(Y,A)$ be a polarized del Pezzo surface. The ellipsoid embedding function of $(Y,\omega_A)$ has no infinite staircase when:
\begin{enumerate}
    \item $Y$ has degree $3$ and $(Y,A)$ has weight sequence
    $$(c;1,1,1,1,1,1)\qquad\text{for $4\leq c<\frac{18+\sqrt{24}}{5}\approx 4.57$}$$
    \item $Y$ has degree $1$ and $(Y,A)$ has weight sequence
    $$(c;1,1,1,1,1,1,1,1)\qquad\text{for $6\leq c<\frac{24+\sqrt{96}}{5}\approx 6.76$}$$
\end{enumerate}
\end{prop}

\begin{proof}
First, the divisors $A$ with weight sequences as in (i) and (ii) are indeed ample as the sum of the (ample) anticanonical divisor and the nef divisor $(c-3)\pi^*\mO_{\pr^2}(1)$. Observe that the lower bounds on $c$ in each case imply by Prop.~\ref{prop:example} that there is a Newton--Okounkov body for $(Y,A)$ whose weight sequence is admitted by $(Y,A)$. It follows that an infinite staircase exists for $(Y,\omega_A)$ if and only if it exists for the toric domain $X_\Delta$. In this context \cite[Thm.~1.11]{chmp_inf_20} gives that if an infinite staircase exists for $X_\Delta$ where $\on{wt}(\Delta)=(c;a_1,\dots,a_n)$ then it accumulates at $a_0>1$ where
\begin{equation} \tag{$\clubsuit$} \label{eqn:accumulation}
    a_0^2-\left(\frac{\left(3c-\sum_ia_i)^2\right)}{c^2-\sum_ia_i^2}-2\right)a_0+1=0
\end{equation}
When $c$ is bounded above as in (i) or (ii) one can verify that the discriminant of (\ref{eqn:accumulation}) is negative and hence no accumulation point can exist.
\end{proof}

Testing the discriminant is quite an unrefined way of establishing that staircases do not exist. It is very possible that more extensive results on infinite staircases for non-toric surfaces are achievable by combining our methods with other techniques from \cite{cri_spe_22,chmp_inf_20}.

\bibliographystyle{acm}

\begin{thebibliography}{10}

\bibitem{and_oko_13}
{\sc Anderson, D.}
\newblock Okounkov bodies and toric degenerations.
\newblock {\em Mathematische Annalen 356}, 3 (2013), 1183--1202.

\bibitem{bks_zar_04}
{\sc Bauer, T., Kuronya, A., and Szemberg, T.}
\newblock Zariski chambers, volumes, and stable base loci.
\newblock {\em J. reine angew. Math}, 576 (2004), 209--233.

\bibitem{bir_con_99}
{\sc Biran, P.}
\newblock Constructing new ample divisors out of old ones.
\newblock {\em Duke Mathematical Journal 98}, 1 (1999), 113--135.

\bibitem{bir_sta_99}
{\sc Biran, P.}
\newblock A stability property of symplectic packing.
\newblock {\em Inventiones mathematicae 136}, 1 (1999), 123--155.

\bibitem{bir_fro_01}
{\sc Biran, P.}
\newblock From symplectic packing to algebraic geometry and back.
\newblock In {\em European Congress of Mathematics\/} (2001), Springer,
  pp.~507--524.

\bibitem{bcmn}
{\sc Bossinger, L., Cheung, M.-W., Magee, T., and N{\'a}jera~Ch{\'a}vez, A.}
\newblock Newton--{Okounkov} bodies and minimal models for cluster varieties.
\newblock {\em (in preparation)\/}.

\bibitem{cv_sha_20}
{\sc Casals, R., and Vianna, R.}
\newblock Sharp ellipsoid embeddings and toric mutations.
\newblock {\em arXiv preprint arXiv:2004.13232\/} (2020).

\bibitem{cw_ech_20}
{\sc Chaidez, J., and Wormleighton, B.}
\newblock {ECH} embedding obstructions for rational surfaces.
\newblock {\em arXiv preprint arXiv:2008.10125\/} (2020).

\bibitem{cw_lat_21}
{\sc Chaidez, J., and Wormleighton, B.}
\newblock Lattice formulas for rational {SFT} capacities.
\newblock {\em arXiv preprint arXiv:2106.07920\/} (2021).

\bibitem{ccfhr_sym_14}
{\sc Choi, K., Cristofaro-Gardiner, D., Frenkel, D., Hutchings, M., and Ramos,
  V. G.~B.}
\newblock Symplectic embeddings into four-dimensional concave toric domains.
\newblock {\em Journal of Topology 7}, 4 (2014), 1054--1076.

\bibitem{chmr_var_13}
{\sc Ciliberto, C., Harbourne, B., Miranda, R., and Ro{\'e}, J.}
\newblock Variations on {Nagata’s} conjecture.
\newblock {\em A celebration of algebraic geometry 18\/} (2013), 185--203.

\bibitem{cri_sym_19}
{\sc Cristofaro-Gardiner, D.}
\newblock Symplectic embeddings from concave toric domains into convex ones.
\newblock {\em Journal of Differential Geometry 112}, 2 (2019).

\bibitem{cri_spe_22}
{\sc Cristofaro-Gardiner, D.}
\newblock Special eccentricities of rational four-dimensional ellipsoids.
\newblock {\em Algebraic \& Geometric Topology 22}, 5 (2022), 2267--2291.

\bibitem{chmp_inf_20}
{\sc Cristofaro-Gardiner, D., Holm, T.~S., Mandini, A., and Pires, A.~R.}
\newblock On infinite staircases in toric symplectic four-manifolds.
\newblock {\em arXiv preprint arXiv:2004.13062\/} (2020).

\bibitem{chr_asy_15}
{\sc Cristofaro-Gardiner, D., Hutchings, M., and Ramos, V. G.~B.}
\newblock The asymptotics of {ECH} capacities.
\newblock {\em Inventiones mathematicae 199}, 1 (2015), 187--214.

\bibitem{ck_ehr_20}
{\sc Cristofaro-Gardiner, D., and Kleinman, A.}
\newblock Ehrhart functions and symplectic embeddings of ellipsoids.
\newblock {\em Journal of the London Mathematical Society 101}, 3 (2020),
  1090--1111.

\bibitem{cls_new_15}
{\sc Cristofaro-Gardiner, D., Li, T.~X., and Stanley, R.}
\newblock New examples of period collapse.
\newblock {\em arXiv preprint arXiv:1509.01887\/} (2015).

\bibitem{cs_sub_20}
{\sc Cristofaro-Gardiner, D., and Savale, N.}
\newblock Sub-leading asymptotics of {ECH} capacities.
\newblock {\em Selecta Mathematica 26}, 5 (2020), 1--34.

\bibitem{def_neg_04}
{\sc De~Fernex, T.}
\newblock Negative curves on very general blow-ups.
\newblock In {\em Projective Varieties with Unexpected Properties. A volume in
  memory of Giuseppe Veronese. Proceedings of the International Conference
  “Projective Varieties with Unexpected Properties.” Siena\/} (2004),
  pp.~199--207.

\bibitem{dem_van_14}
{\sc De~Fernex, T., Ein, L., and Musta{\c{t}}{\u{a}}, M.}
\newblock Vanishing theorems and singularities in birational geometry.
\newblock {\em Unpublished monograph\/} (2014).

\bibitem{duc_the_21}
{\sc Ducat, T.}
\newblock The 3-dimensional {Lyness} map and a self-mirror log {Calabi}--{Yau}
  3-fold.
\newblock {\em arXiv preprint arXiv:2105.07843\/} (2021).

\bibitem{gro_pse_85}
{\sc Gromov, M.}
\newblock Pseudo holomorphic curves in symplectic manifolds.
\newblock {\em Inventiones mathematicae 82}, 2 (1985), 307--347.

\bibitem{ghr_exa_22}
{\sc Gutt, J., Hutchings, M., and Ramos, V.~G.}
\newblock Examples around the strong {Viterbo} conjecture.
\newblock {\em Journal of Fixed Point Theory and Applications 24}, 2 (2022),
  1--22.

\bibitem{hut_qua_11}
{\sc Hutchings, M.}
\newblock Quantitative embedded contact homology.
\newblock {\em Journal of Differential Geometry 88}, 2 (2011), 231--266.

\bibitem{hut_lec_14}
{\sc Hutchings, M.}
\newblock Lecture notes on embedded contact homology.
\newblock In {\em Contact and symplectic topology}. Springer, 2014,
  pp.~389--484.

\bibitem{hut_ech_22}
{\sc Hutchings, M.}
\newblock {ECH} capacities and the {Ruelle} invariant.
\newblock {\em Journal of Fixed Point Theory and Applications 24}, 2 (2022),
  1--25.

\bibitem{kav_tor_19}
{\sc Kaveh, K.}
\newblock Toric degenerations and symplectic geometry of smooth projective
  varieties.
\newblock {\em Journal of the London Mathematical Society 99}, 2 (2019),
  377--402.

\bibitem{kk_new_12}
{\sc Kaveh, K., and Khovanskii, A.~G.}
\newblock {Newton--Okounkov} bodies, semigroups of integral points, graded
  algebras and intersection theory.
\newblock {\em Annals of Mathematics\/} (2012), 925--978.

\bibitem{kl_loc_18}
{\sc K{\"u}ronya, A., and Lozovanu, V.}
\newblock Local positivity of linear series on surfaces.
\newblock {\em Algebra \& Number Theory 12}, 1 (2018), 1--34.

\bibitem{klm_con_12}
{\sc K{\"u}ronya, A., Lozovanu, V., and Maclean, C.}
\newblock Convex bodies appearing as {Okounkov} bodies of divisors.
\newblock {\em Advances in Mathematics 229}, 5 (2012), 2622--2639.

\bibitem{lms_gro_13}
{\sc Latschev, J., McDuff, D., and Schlenk, F.}
\newblock The {Gromov} width of 4-dimensional tori.
\newblock {\em Geometry \& Topology 17}, 5 (2013), 2813--2853.

\bibitem{lm_con_09}
{\sc Lazarsfeld, R., and Musta{\c{t}}{\u{a}}, M.}
\newblock Convex bodies associated to linear series.
\newblock In {\em Annales scientifiques de l'{\'E}cole normale
  sup{\'e}rieure\/} (2009), vol.~42, pp.~783--835.

\bibitem{laz_pos_17}
{\sc Lazarsfeld, R.~K.}
\newblock {\em Positivity in algebraic geometry I: Classical setting: line
  bundles and linear series}, vol.~48.
\newblock Springer, 2017.

\bibitem{mmw_sta_22}
{\sc Magill, N., McDuff, D., and Weiler, M.}
\newblock Staircase patterns in {Hirzebruch} surfaces.
\newblock {\em arXiv preprint arXiv:2203.06453\/} (2022).

\bibitem{mcd_sym_09}
{\sc McDuff, D.}
\newblock Symplectic embeddings of 4-dimensional ellipsoids.
\newblock {\em Journal of Topology 2}, 1 (2009), 1--22.

\bibitem{mcd_hof_11}
{\sc McDuff, D.}
\newblock The {Hofer} conjecture on embedding symplectic ellipsoids.
\newblock {\em Journal of Differential Geometry 88}, 3 (2011), 519--532.

\bibitem{mp_sym_94}
{\sc McDuff, D., and Polterovich, L.}
\newblock Symplectic packings and algebraic geometry.
\newblock {\em Inventiones mathematicae 115}, 1 (1994), 405--429.

\bibitem{ms_emb_12}
{\sc McDuff, D., and Schlenk, F.}
\newblock The embedding capacity of 4-dimensional symplectic ellipsoids.
\newblock {\em Annals of Mathematics\/} (2012), 1191--1282.

\bibitem{mor_cri_92}
{\sc Moriwaki, A.}
\newblock A criterion of openness of a family of nef line bundles.
\newblock {\em manuscripta mathematica 75}, 1 (1992), 327--331.

\bibitem{oko_bru_96}
{\sc Okounkov, A.}
\newblock {Brunn}--{Minkowski} inequality for multiplicities.
\newblock {\em Inventiones mathematicae 125}, 3 (1996), 405--411.

\bibitem{pab_gro_14}
{\sc Pabiniak, M.}
\newblock Gromov width of non-regular coadjoint orbits of {$U(n)$},
  {$\on{SO}(2n)$} and {$\on{SO}(2n+ 1)$}.
\newblock {\em Mathematical Research Letters 21}, 1 (2014), 187--205.

\bibitem{per_div_11}
{\sc Perling, M.}
\newblock Divisorial cohomology vanishing on toric varieties.
\newblock {\em Documenta Mathematica 16\/} (2011), 209--251.

\bibitem{sch_sym_18}
{\sc Schlenk, F.}
\newblock Symplectic embedding problems, old and new.
\newblock {\em Bulletin of the American Mathematical Society 55}, 2 (2018),
  139--182.

\bibitem{sun_est_19}
{\sc Sun, W.}
\newblock An estimate on energy of min-max {Seiberg--Witten} {Floer}
  generators.
\newblock {\em Mathematical Research Letters 26}, 6 (2019), 1807--1827.

\bibitem{wor_ech_21}
{\sc Wormleighton, B.}
\newblock {ECH} capacities, {Ehrhart} theory, and toric varieties.
\newblock {\em Journal of Symplectic Geometry 19}, 2 (2021), 475--506.

\bibitem{wor_alg_22}
{\sc Wormleighton, B.}
\newblock Algebraic capacities.
\newblock {\em Selecta Mathematica 28}, 1 (2022), 1--45.

\bibitem{wor_tow_22}
{\sc Wormleighton, B.}
\newblock Towers of {Looijenga} pairs and asymptotics of {ECH} capacities.
\newblock {\em manuscripta math.\/} (2022).

\bibitem{zar_the_62}
{\sc Zariski, O.}
\newblock The theorem of {Riemann-Roch} for high multiples of an effective
  divisor on an algebraic surface.
\newblock {\em Annals of Mathematics\/} (1962), 560--615.

\end{thebibliography}

\end{document}